\newcommand{\abb}[5]{%
\setlength{\arraycolsep}{0.4ex}%
\begin{array}{rcccc}%
#1 &:\,& #2 & \,\,\longrightarrow\,\, & #3 \\[0.5ex]%
     & & #4 & \longmapsto & #5%
\end{array}%
}
\newcommand{\N}{\mathbb{N}}
\newcommand{\R}{\mathbb{R}}
\newcommand{\X}{\underline{X}}
\newcommand{\C}{\mathcal{C}}
\renewcommand{\P}{\mathcal{P}}
\renewcommand{\H}{\mathcal{H}}
\renewcommand{\L}{\mathcal{L}}
\newcommand{\V}{\mathcal{V}}
\newcommand{\W}{\mathcal{W}}
\newcommand{\E}{\mathcal{E}}
\newcommand{\Sym}{\mathcal{S}\text{ym}}
\newcommand{\sign}{\text{sgn}}
\renewcommand{\emph}{\textbf}
\DeclarePairedDelimiter\ceil{\lceil}{\rceil}
\DeclarePairedDelimiter\floor{\lfloor}{\rfloor}
\newtheorem{theorem}{Theorem}[section]
\newtheorem{lemma}[theorem]{Lemma}
\newtheorem{corollary}[theorem]{Corollary}
\newtheorem{proposition}[theorem]{Proposition}
\newtheorem{remark}[theorem]{Remark}
\newtheorem{example}[theorem]{Example}
\newtheorem{definition}[theorem]{Definition}
\newtheorem{conjecture}[theorem]{Conjecture}
\newtheorem{question}[theorem]{Question}
\title{Shellable slices of hyperbolic polynomials and the degree principle}
\author{Arne Lien}
\address{Department of Mathematics and Statistics, UiT - the Arctic University of Norway, 9037 Troms\o, Norway}
\email{arne.lien@uit.no}
\author{Robin Schabert}
\address{Department of Mathematics and Statistics, UiT - the Arctic University of Norway, 9037 Troms\o, Norway}
\email{robin.schabert@uit.no}
\date{}
\thanks{This work has been supported by Tromsø Research Foundation under the grant agreement 17matteCR (SymRAG) and the European Union’s Horizon 2020 research and innovation programme under the Marie Sklodowska-Curie Actions, grant agreement 813211
(POEMA)}
\begin{document}

\maketitle

\begin{abstract}
    We study a natural stratification of certain affine slices of univariate hyperbolic polynomials. We look into which posets of strata can be realized and show that the dual of the poset of strata is a shellable simplicial complex and in particular a combinatorial sphere. From this we obtain a g-theorem and an upper bound theorem on the number of strata. We use these results to design smaller test sets to improve upon Timofte's degree principle and give bounds on how much the degree principle can be improved.
\end{abstract}

Univariate polynomials with only real roots are called \emph{hyperbolic polynomials}. We will study \emph{hyperbolic slices}, that is, sets of hyperbolic polynomials that share the same first few coefficients. We stratify these sets in terms of the arrangements and multiplicities of the roots of the hyperbolic polynomials and then we study the combinatorial structure of the poset of strata and its implications for the study of real symmetric varieties.

These hyperbolic slices have a rich geometric structure that has been studied by several authors. For instance,  \cite{arnol1986hyperbolic}, \cite{givental1987moments} and \cite{kostov1989geometric} studied \emph{Vandermonde varieties}, that is, varieties given by the first few elementary symmetric polynomials. Since the set of monic hyperbolic polynomials can be viewed as the orbit space of the symmetric group and the elementary symmetric polynomials generate the ring of symmetric polynomials, Vandermonde varieties are the fibers of hyperbolic slices. Thus, hyperbolic slices are deeply connected to Vandermonde varieties.

More generally, hyperbolic slices are not just connected to Vandermonde varieties, but to the study of any symmetric variety. In \cite{riener2012degree} and \cite{riener2024linear} this connection is exploited to prove and generalize Timofte's degree and half-degree principle for the symmetric group. The degree principle implies that symmetric polynomials of degree at most $d$ have a common real root if and only if they have a common real root with at most $d$ distinct coordinates, thus it allows one to show nonemptyness of symmetric varieties much faster than arbitrary varieties. We study the poset of strata of hyperbolic slices in order to make improvements on this degree principle.

The poset of strata of hyperbolic slices was already studied by one of the authors in \cite{lien2023hyperbolic} and the question was raised if it is polytopal. We are able to show the weaker statement in Theorem \ref{3:thm:shell} that the dual of the poset of strata is generically a shellable simplicial complex. To prove this, we will first generalize a result by Arnold \cite{arnol1986hyperbolic} and Meguerditichian \cite{meguerditchian1992theorem}. They show that every hyperbolic slice has a unique minimal and maximal polynomial with respect to the first free coefficient and that these polynomials are generically uniquely characterized by alternating single and multiple roots. We show in Theorem \ref{2:thm:minmax} that an analogous result is true for every stratum of a hyperbolic slice. Then we use this to show that the dual poset is generically a shellable simplicial complex and therefore a combinatorial sphere (Corollary \ref{3.cor:sph}). From this, we obtain the same bounds and relations on the number of $i$-dimensional strata as for certain polytopes. Namely, we obtain a "$g$-theorem" (Corollary \ref{3:cor:g-thm}) for generic hyperbolic slices and an "upper bound theorem" (Corollary \ref{3:cor:ubt}) for the general case.

With the connection between hyperbolic slices and real symmetric varieties, we can use these combinatorial results to improve upon Timofte's degree principle. The degree principle allows one to show nonemptyness of real symmetric varieties by reducing the number of variables needed to the minimal amount, and so our improvement lies in reducing the number of orbit types needed to check. Thus we improve the degree principle by considering test sets that have smaller sizes. These test sets, which we call \emph{Vandermonde coverings}, are therefore characterized by certain orbit types of the symmetric group. We give a lower and an upper bound on the size of an optimal Vandermonde covering (Theorem \ref{4:upper_bound} and \ref{4:recursive_lower_bound}) and outline a computational approach on how to get better and maybe optimal Vandermonde coverings for real symmetric varieties given by polynomials in few variables and low degrees.

We conclude with several open questions and conjectures on the stratification of hyperbolic slices.

\subsection*{Acknowledgements.}
We would like to thank Philippe Moustrou for his valuable comments on the manuscript.

\section{Preliminaries}
\subsection{Simplicial complexes, shellings and spheres}

\begin{definition}
    A \emph{poset} $(P,\leq)$, or \emph{partially ordered set}, is a set $P$ equipped with a partial order $\leq$.
\end{definition}

We usually just write $P$ if the partial order is clear from context. Also, we say that an element $a$, of a poset, $P$, \emph{covers} $b\in P$ if $b\leq a$ and for any $c\in P$ with $b\leq c\leq a$, we have $c=a$ or $c=b$. So we see that the partial order on $P$ is \textbf{generated} by its covering relations in the following sense: let $a,b\in P$, then $a\leq b$ if there is a sequence of elements $c_1,\dots,c_m$ with $c_1=a$, $c_m=b$ and where $c_i$ is covered by $c_{i+1}$ for any $i\in [m-1]$.

Next, we say that two posets $(P,\leq)$ and $(Q,\leq^*)$ are \emph{isomorphic} if there exists an order-preserving bijection between $P$ and $Q$. 
Lastly, we say that the poset $(P,\geq)$ is the \emph{dual} poset of $(P,\leq)$. 

An important subclass of posets are simplicial complexes.

\begin{definition}
    A \emph{simplicial complex} is  a family of finite sets that is closed under taking subsets. A \emph{geometric simplicial complex} is a family of simplices, $S$ in $\R^m$, such that each face of a simplex in $S$ is also in $S$ and such that the intersection of two simplices is a face of each simplex.
\end{definition}

Thus any simplicial complex may be identified with a family, $C$, of subsets of $[m]:=\{1,2,\dots,m\}$ for some nonnegative integer $m$, such that if $A\subset B\in C$, then $A\in C$. A \emph{geometric realization} of $C$ is a geometric simplicial complex $S$ whose poset of simplices is isomorphic to $C$. Since all simplicial complexes have a geometric realization, we will usually not distinguish between a geometric realization and the simplicial complex. Instead, it should always be clear from the context which object we are referring to.

We can construct a geometric realization of a simplicial complex $C$ by identifying the smallest nonempty sets of $C$ with the points $e_1,\dots,e_m\in\R^m$, where $e_i$ is the $i^{th}$ standard basis vector, and then take the convex hull of $e_{i_1},\dots,e_{i_k}$ whenever $\{i_1,\dots,i_k\}$ is an element of $C$.

Just like for a geometric realization of $C$, the elements of $C$ are called \emph{faces}. The \emph{dimension} of a face is defined as the dimension of the corresponding face in a geometric realization and the dimension of $C$ is the dimension of its highest-dimensional faces. Also, the faces that are maximal with respect to inclusion are called \emph{facets}, the second largest are called \emph{ridges} and the smallest nonempty faces are called \emph{vertices}. When all the facets have the same dimension, the simplicial complex is called \textbf{pure}.

We also need this natural generalization of a geometric simplicial complex:
\vspace{-0.2in}
\begin{definition}
    A \emph{polytope complex} is a family of polytopes $C$, in $\R^m$, such that each face of a polytope is in $C$ and such that the intersection of two polytopes is a face of each.
\end{definition}

As with simplicial complexes, we will usually not distinguish between a polytope complex and its abstract poset of polytopes.

Lastly, we need to talk about a particular class of polytope complexes that are similar to spheres from a combinatorial point of view. Note that a \emph{simplicial sphere} is a geometric simplicial complex which is homeomorphic to a sphere. But showing that a simplicial complex is a simplicial sphere can be difficult and thus we introduce the so-called "combinatorial spheres".

\begin{definition}
    A \emph{subdivision} of a polytope complex $C$ is a polytope complex $S$ such that $$\bigcup_{I\in S}I=\bigcup_{J\in C}J\subset \R^m$$ and such that each face of $S$ is contained in a face of $C$. Moreover, we say a subdivision $S$ is \emph{simplicial} if $S$ is a geometric simplicial complex.
\end{definition}

\begin{definition}
    A \emph{combinatorial (or PL) $m$-sphere} is a polytope complex for which there exists a simplicial subdivision which is isomorphic to a simplicial subdivision of the boundary of a $(m+1)$-dimensional simplex.
\end{definition}

To determine if a simplicial complex is a combinatorial sphere, we need the notion of shellability.

\begin{definition}
    A \emph{shelling} of a pure simplicial complex, $C$, is an ordering of the facets, $F_1,\dots,F_k$, such that for any $i\in\{2,..,k\}$, the simplicial complex 
    $$\bigcup_{j=1}^{i-1}F_j\cap F_i$$ is pure of dimension $\dim(C)-1$. If there exists a shelling of $C$, then $C$ is called \emph{shellable}.
\end{definition}

Then from Proposition 1.2 in \cite{danaraj1974shellings} we have the following result:

\begin{proposition}\label{prelim:plsphere}
    A shellable simplicial complex of dimension $m$, whose ridges are all contained in exactly two facets, is a combinatorial $m$-sphere.
\end{proposition}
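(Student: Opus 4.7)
The plan is to prove Proposition \ref{prelim:plsphere} by a double induction, outer on the dimension $m$ and inner on the length $k$ of the shelling $F_1,\ldots,F_k$. For each $i\in[k]$ set $C_i:=F_1\cup\cdots\cup F_i$, viewed as a subcomplex of $C$. The key claim I would establish is a sharper dichotomy: each intermediate $C_i$ with $i<k$ is a combinatorial $m$-ball, and the ridge hypothesis forces $C_k$ to be a combinatorial $m$-sphere. The base case $i=1$ is trivial since $C_1=F_1$ is a single $m$-simplex, hence a combinatorial $m$-ball.

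For the inductive step, suppose $C_{i-1}$ is a combinatorial $m$-ball. By the shelling property the intersection $D_i:=C_{i-1}\cap F_i$ is a pure $(m-1)$-dimensional subcomplex of the boundary $\partial F_i$ of the $m$-simplex $F_i$. Since $\partial F_i$ is itself a combinatorial $(m-1)$-sphere, the outer induction on $m$ (applied to the inherited shelling of $D_i$ inside $\partial F_i$, whose facets are exactly those ridges of $F_i$ that lie in some $F_j$ with $j<i$) yields that $D_i$ is a combinatorial $(m-1)$-ball whenever $D_i\subsetneq\partial F_i$, and is all of $\partial F_i$ otherwise. The next step is a PL gluing move: a combinatorial $m$-ball and an $m$-simplex meeting along a combinatorial $(m-1)$-ball contained in the boundary of each assemble into a combinatorial $m$-ball; if they instead meet along the full boundary $(m-1)$-sphere of the simplex, the union is a combinatorial $m$-sphere. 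Iterating this move through $i=2,3,\ldots$ propagates the ball conclusion, and determining which case occurs at $i=k$ is where the ridge hypothesis is used.

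The ridge hypothesis enters as follows. For each $i<k$, at least one ridge of $F_i$ must fail to lie in $D_i$, since otherwise that ridge would be contained only in the two facets $F_i$ and whichever earlier $F_j$ it came from, precluding any further sharing with $F_\ell$ for $\ell>i$; a careful bookkeeping argument shows this is consistent with $C_{i-1}$ remaining a ball. Conversely, at $i=k$ every ridge of $F_k$ must already lie in some previous facet, because each ridge lies in exactly two facets of $C$ and $F_k$ is the last one; hence $D_k=\partial F_k$, and the final gluing step closes $C_{k-1}$ off into a combinatorial $m$-sphere.

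The main obstacle is the PL gluing lemma for combinatorial balls, i.e.\ showing rigorously that if $B$ is a combinatorial $m$-ball, $F$ is an $m$-simplex, and $B\cap F$ is a combinatorial $(m-1)$-ball contained in $\partial F\cap\partial B$, then $B\cup F$ admits a simplicial subdivision PL-isomorphic to a subdivision of an $m$-simplex. This requires choosing a common simplicial refinement on $B\cap F$ and extending it compatibly into $B$ and into $F$, which is a standard but nontrivial construction in PL topology; everything else in the argument is combinatorial bookkeeping on the shelling order together with the outer induction on $m$.
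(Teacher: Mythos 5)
The paper does not prove this statement; it simply cites Proposition~1.2 of Danaraj and Klee (1974). Your sketch is essentially a reconstruction of the Danaraj--Klee argument: build the complex facet by facet, show each intermediate stage is a PL ball via the Newman gluing lemma, and use the two-facets-per-ridge hypothesis to force the final step to close off into a sphere. That is the right plan, and your identification of the PL gluing lemma as the technical crux is accurate. To make the outer induction work you should state it so that it simultaneously covers both possible outcomes in dimension $m-1$ (ball when some ridge of $D_i$ lies in only one facet of $D_i$, sphere when every ridge lies in two), since you invoke it to conclude that $D_i$ is a ball, not a sphere, whenever $D_i\subsetneq\partial F_i$.

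There is, however, a genuine gap in the clause that is supposed to rule out $D_i=\partial F_i$ for $i<k$. You argue that if every ridge of $F_i$ lay in $D_i$ then each would already be in two facets $F_i$ and some earlier $F_j$, ``precluding any further sharing with $F_\ell$ for $\ell>i$,'' and that bookkeeping keeps $C_{i-1}$ a ball. But the shelling does not require $F_{i+1}$ to share a ridge with $F_i$ specifically; it may attach to $C_i$ along ridges of much earlier facets, so nothing you have said forbids the shelling from continuing. The correct use of the hypothesis is one step removed: if $D_i=\partial F_i$ for some $i<k$, then by the gluing lemma $C_i$ is a PL $m$-sphere, hence a closed pseudomanifold in which \emph{every} ridge (not just the ridges of $F_i$) already lies in exactly two facets of $C_i$. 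The shelling property then forces $C_i\cap F_{i+1}$ to contain at least one ridge $R$, and $R$ would lie in two facets of $C_i$ together with $F_{i+1}$, giving three facets of $C$ containing $R$, contradicting the hypothesis. That pseudomanifold saturation of the intermediate sphere is the missing idea; once you have it, the rest of your bookkeeping, the identification $D_k=\partial F_k$, and the final gluing to a sphere go through as you describe.
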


\subsection{Symmetric polynomials and Vandermonde varieties}

Throughout the article, we denote by $\Sym(n)$ the symmetric group on the set $[n]$, $\R[\X]:=\R[X_1,\dots,X_n]$ the polynomial ring in $n$ variables over $\R$ and by $\R[\X]^{\Sym(n)}$ the subring of symmetric polynomials.

\begin{definition}\label{0:def:el.sym.powersum}
For $i\in [n]$, we denote by
\[E_i:=\sum_{1\leq j_1<\dots<j_i\leq n}X_{j_1}\cdots X_{j_i}\] 
the $i^{th}$ \emph{elementary symmetric polynomial} and by
\[P_i:=\sum_{j=1}^n X_j^i\] 
the $i^{th}$ \emph{power sum}.
\end{definition}

The Fundamental Theorem of Symmetric Polynomials states, that every polynomial can be uniquely written in terms of the elementary symmetric polynomials. Furthermore, we have the following:

\begin{theorem}[Fundamental Theorem of Symmetric Polynomials]\label{4:lem:cordian}
Any symmetric polynomial $F\in \R[\X]^{\Sym(n)}$ of degree $s$, with $s\leq n$, can be uniquely written as
\[H=G(E_1,\dots,E_s),\]
where $G$ is a polynomial in $\R[Z_1,\dots,Z_s]$.
\end{theorem}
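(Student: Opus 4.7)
The plan is to proceed by the classical leading-monomial induction, first establishing existence and then deducing uniqueness from algebraic independence of $E_1,\dots,E_n$. I would fix the lexicographic monomial order on $\R[\X]$ (so that $X_1^{a_1}\cdots X_n^{a_n} > X_1^{b_1}\cdots X_n^{b_n}$ iff the first nonzero entry of $(a_1-b_1,\dots,a_n-b_n)$ is positive) and, for a nonzero polynomial $H$, write $\mathrm{LM}(H)$ for its leading monomial.

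The first structural observation is that if $H\in\R[\X]^{\Sym(n)}$ is nonzero with $\mathrm{LM}(H)=X_1^{a_1}\cdots X_n^{a_n}$, then $a_1\geq a_2\geq\cdots\geq a_n$: otherwise some transposition would send this monomial to a lex-larger monomial of $H$, contradicting maximality. The second, where the hypothesis $s\leq n$ is used crucially, is that if $\deg H\leq s$, then $a_{s+1}=\cdots=a_n=0$; indeed, $a_{s+1}\geq 1$ together with the weakly decreasing property would force $a_1+\cdots+a_{s+1}\geq s+1>s$. I would then compute $\mathrm{LM}(E_i)=X_1X_2\cdots X_i$ and set
\[
\pi := E_1^{a_1-a_2}E_2^{a_2-a_3}\cdots E_{s-1}^{a_{s-1}-a_s}E_s^{a_s},
\]
which is a polynomial in $E_1,\dots,E_s$ whose leading monomial telescopes to $X_1^{a_1}\cdots X_s^{a_s}=\mathrm{LM}(H)$. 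If $c$ is the leading coefficient of $H$, then $H-c\pi$ is a symmetric polynomial of degree at most $s$ with strictly smaller leading monomial. Since the set of monomials of total degree at most $s$ that can appear as leading monomials of symmetric polynomials is finite and lex-well-ordered on such a finite set, iterating produces the existence of the decomposition $H=G(E_1,\dots,E_s)$ after finitely many steps.

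For uniqueness, it suffices to prove that $E_1,\dots,E_s$ are algebraically independent over $\R$. Suppose $G\in\R[Z_1,\dots,Z_s]$ satisfies $G(E_1,\dots,E_s)=0$ in $\R[\X]$, and assume $G\neq 0$. For each monomial $Z_1^{b_1}\cdots Z_s^{b_s}$ appearing in $G$, the previous leading-term computation gives
\[
\mathrm{LM}\bigl(E_1^{b_1}\cdots E_s^{b_s}\bigr)=X_1^{b_1+\cdots+b_s}X_2^{b_2+\cdots+b_s}\cdots X_s^{b_s}.
\]
The map $(b_1,\dots,b_s)\mapsto(b_1+\cdots+b_s,\,b_2+\cdots+b_s,\,\dots,\,b_s)$ is injective, so distinct monomials in $G$ contribute distinct leading monomials to $G(E_1,\dots,E_s)$. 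Thus no cancellation of the lex-maximal term can occur, contradicting $G(E_1,\dots,E_s)=0$. Hence $G=0$, and the representation in Theorem \ref{4:lem:cordian} is unique.

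The only genuinely delicate point is keeping track of how the degree hypothesis $s\leq n$ restricts the support of $\mathrm{LM}(H)$ to the first $s$ variables; once this is pinned down, the construction of $\pi$ automatically uses only $E_1,\dots,E_s$, and both existence and uniqueness follow from the same injective correspondence between weakly decreasing exponent tuples and multi-indices in $E_1,\dots,E_s$.
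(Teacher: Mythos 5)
Your proof is correct, and it is a complete argument where the paper only cites Proposition 2.3 of Riener (2012) without reproducing a proof. You give the standard Gauss/Waring leading-monomial induction, and you isolate precisely where the hypothesis $s\leq n$ enters: a weakly decreasing exponent vector of total degree $\leq s$ must have zeros from position $s+1$ onward, so the reduction monomial $\pi$ automatically involves only $E_1,\dots,E_s$. The uniqueness argument via the injective triangular change of exponents $(b_1,\dots,b_s)\mapsto(b_1+\cdots+b_s,\dots,b_s)$ is the usual algebraic-independence proof, correctly restricted to the first $s$ elementary symmetric polynomials (which again uses $s\leq n$ so that each $E_i$ has leading monomial $X_1\cdots X_i$). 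Two very minor points you might make explicit: (i) $\deg\pi = a_1+\cdots+a_s \leq s$, so the reduction keeps the working polynomial in degree $\leq s$ and the induction hypothesis stays applicable, and (ii) at the top of the induction the leading coefficient of $\pi$ is $1$, so $H-c\pi$ indeed has strictly smaller leading monomial. Both are immediate from what you wrote, so there is no gap.
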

\begin{proof}
Proposition 2.3 in \cite{riener2012degree}.
\end{proof}

Theorem \ref{4:lem:cordian} is a key tool in the proof of the degree principle in \cite{riener2012degree}.

\begin{theorem}[Degree principle]\label{0:thm:degree}
Let $f_1,\dots,f_k\in \R[\X]^{\Sym(n)}$ be symmetric polynomials of degree at most $d< n$. Then the real variety
\[V_\R(f_1,\dots,f_k)\]
is nonempty if and only if it contains a point with at most $d$ distinct coordinates.
\end{theorem}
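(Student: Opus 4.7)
My plan is to reduce the statement to a constrained optimization problem on a compact real algebraic set and then invoke Lagrange multipliers. The ``only if'' direction is trivial, so the task is the ``if'' direction. First I would invoke Theorem~\ref{4:lem:cordian} to write each $f_i$ as $g_i(E_1, \ldots, E_d)$. Consequently, any fiber of the map $(E_1, \ldots, E_d)\colon \R^n \to \R^d$ is either entirely contained in $V_\R(f_1,\dots,f_k)$ or disjoint from it. Fix $x\in V_\R(f_1,\dots,f_k)$ and set
\[
F = \{z\in\R^n : E_j(z) = E_j(x) \text{ for all } j=1,\dots,d\} \subseteq V_\R(f_1,\dots,f_k).
\]
By Newton's identities, $F$ is equivalently cut out by $P_j(z) = P_j(x)$ for $j=1,\ldots,d$. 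Assuming $d \geq 2$, the constraint $P_2(z) = \sum_{i} z_i^2 = P_2(x)$ makes $F$ compact; the corner case $d=1$ is immediate, since $F$ then contains the diagonal point $(P_1(x)/n,\ldots,P_1(x)/n)$, which has one distinct coordinate.

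Next I would maximize the power sum $P_{d+1}(z)=\sum_{i=1}^n z_i^{d+1}$ over $F$ and choose a maximizer $y$. If $y$ already has fewer than $d$ distinct coordinates, we are done. Otherwise, the $n \times d$ Vandermonde-type matrix $(y_i^{j-1})_{i\in[n],\, j\in[d]}$ has rank exactly $d$, so the constraint gradients $\nabla P_1(y),\ldots,\nabla P_d(y)$ are linearly independent. Lagrange multipliers then produce $\lambda_1,\ldots,\lambda_d \in \R$ with
\[
(d+1)\, y_i^{d} \;=\; \sum_{j=1}^{d} j\, \lambda_j\, y_i^{j-1} \qquad \text{for every } i\in[n].
\]
Therefore every coordinate $y_i$ is a real root of the \emph{same} univariate polynomial $(d+1)T^{d} - \sum_{j=1}^{d} j\lambda_j T^{j-1}$, which has degree $d$ and hence at most $d$ distinct roots. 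So $y\in F\subseteq V_\R(f_1,\dots,f_k)$ has at most $d$ distinct coordinates, providing the desired witness.

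The only delicate step is justifying the application of Lagrange multipliers, i.e.\ checking the constraint qualification at the maximizer; but this dichotomy is exactly what makes the argument self-resolving, since having fewer than $d$ distinct coordinates already proves the theorem and having at least $d$ is precisely what a Vandermonde computation needs to guarantee independence of the gradients. The two reformulations that power the whole proof are the passage from $E_j$ to $P_j$ via Newton's identities, which simultaneously produces compactness of $F$ through $P_2$ and turns the first-order condition into the clean univariate polynomial equation of degree $d$ displayed above.
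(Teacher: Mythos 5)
Your proof is correct and follows the standard argument: the paper itself states Theorem~\ref{0:thm:degree} without proof, attributing it to \cite{timofte2003positivity} and \cite{riener2012degree}, and your route (write the $f_i$ via Theorem~\ref{4:lem:cordian}, pass to power sums for compactness, optimize $P_{d+1}$ on the fiber, and read off a degree-$d$ Lagrange polynomial annihilating all coordinates) is precisely the argument in \cite{riener2012degree}. It is also the same Lagrange-multiplier and Vandermonde-rank machinery that this paper deploys in the appendix to prove the sharper Theorem~\ref{2:thm:minmax}, so the approach matches both the cited source and the paper's own methods.
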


\begin{definition}\label{0:def:composition}
A sequence of positive integers $\mu=(\mu_1,\ldots,\mu_l)$ which sum up to $n$ is called a \emph{composition of $n$ into $l$ parts} and we call $\ell(\mu):=l$ the \emph{length} of $\lambda$. 
\end{definition}

Next, we introduce Vandermonde varieties and the Weyl chamber:
\begin{definition}\label{0:def:vandermonde}
For $s\in [n]$ and $a\in \R^s$, we call
\[\V(a) := \left\{ x\in \R^n ~\middle|~ -E_1(x)=a_1,\dots,(-1)^sE_s(x)=a_s \right\}\]
the \emph{Vandermonde variety} of $a$. For a monic polynomial
\[F=T^n+F_1T^{n-1}+\dots+F_n\]
and $s\leq n$, we define
\[\V_s(F):=\V(F_1,\dots,F_s).\]
Furthermore, for a composition $\mu$ of $n$ and a polynomial $Q\in \R[\X]$ we define
\[Q^\mu:=Q^\mu(\underbrace{X_1,\dots,X_1}_{\mu_1-\text{times}},\underbrace{X_2,\dots,X_2}_{\mu_2-\text{times}},\dots,\underbrace{X_s,\dots,X_s}_{\mu_s-\text{times}})\in\R[X_1,\dots,X_l]\]
and
\begin{align*}
    \V_s^\mu(F) := \{x\in\mathbb{R}^l ~|~ (-1)^iE_i^\mu(x) = F_i \ \forall \ i\in [s]\}
\end{align*}
the \emph{Vandermonde variety} of $F$ with respect to $\mu$ and $s$.
\end{definition}

\begin{definition}\label{0:def:weyl-chamber}
For $l\in \N$, we denote by
\[\W_l:=\left\{ x\in \R^l ~\middle|~ x_1\leq\dots\leq x_l \right\}\]
the $l$-dimensional \emph{Weyl chamber}.
\end{definition}

\section{Hyperbolic slices and posets}\label{sec:strat}

Throughout the article, we will denote by $\H\subset \R[T]$ the set of monic hyperbolic polynomials, that is, the monic polynomials with only real roots. Furthermore, we fix a monic hyperbolic polynomial $F\in \H$ of degree $n\in\N$ and an integer $s\in\N$, with $s\leq n$. Then the sets of hyperbolic polynomials that we will study are the following.

\begin{definition}
     We call the affine slice 
     \[\H_s(F) = \{T^n+H_1T^{n-1}+\dots + H_n \in \H ~|~ H_i=F_i \ \forall \ i\in [s]\},\] 
     where $F=T^n+F_1T^{n-1}+\dots +F_n$, a \emph{hyperbolic slice}.
\end{definition}

First, we recall some previously established results on hyperbolic slices and provide examples of hyperbolic slices and their stratifications. In particular, we will see that the strata are contractible and we see a characterization of the strata's relative interior and the closure of their relative interior.

Then we introduce a generalization of the main theorem in \cite{meguerditchian1992theorem}. In that article, they investigate the following question: for which monic hyperbolic polynomials $H$, of degree $n$, is $H+c_0T^k+\dots+c_k$ not hyperbolic for any $c_0,\dots,c_k\in\R$ with $c_0> 0$ (resp. $c_0< 0$) and $k<n$?  They call such polynomials "$k$-maximal" (resp. "$k$-minimal") and characterize which polynomials are $k$-minimal and $k$-maximal. Thus they characterize which polynomials in $\H_s(F)$ have a minimal first free coefficient and which polynomials have a maximal one. We extend this question to the strata of hyperbolic slices and prove an analogous result.

\subsection{Stratification of hyperbolic slices}

We will study a particular stratification of $\H_s(F)$ and in order to define this stratification, we need to introduce a partial order on compositions.

\begin{definition}
For two compositions of $n$, $\mu$ and $\lambda$, we let $\mu\leq\lambda$ if there is a composition $\nu$ of $\ell(\lambda)$  of length $l=\ell(\mu)$ such that $$\mu= (\lambda_1+\dots+\lambda_{\nu_1},\dots,\lambda_{\ell(\lambda)-\nu_l+1}+\dots+\lambda_{\ell(\lambda)}).$$
\end{definition}

In other words $\mu\leq\lambda$ if one can obtain $\mu$ from $\lambda$ by replacing some of the commas in $\lambda$ with plus signs. For a hyperbolic polynomial $H$ with distinct roots $b_1<\dots < b_l$ and respective multiplicities $m_1,\dots, m_l$ we will let $c(H) = (m_1,\dots,m_l)$ denote the \emph{composition of $H$}. 

\begin{definition}
    Let $\mu$ be a composition of $n$. Then we define the stratum
    \[\H_s^\mu(F):=\{H\in\H_s(F)~|~c(H)\leq \mu\},\]
    of $\H_s(F)$, and we call the poset of strata of $\H_s(F)$, partially ordered by inclusion, a \emph{hyperbolic poset} and denote it by $\L_s(F)$.
\end{definition}

We commonly identify monic polynomials of degree $n$ in $\R[T]$ with points in $\R^n$. Thus we will be equipping $\H_s^\mu(F)$ with the subspace topology of the Euclidean topology on $\R^n$.
\begin{remark}\label{2:rem:vieta}
The set $\H$ of hyperbolic polynomials can be seen as the image of the \emph{Vieta map}
\[\abb{\E}{\R^n}{\H}{x}{(-E_1(x),\dots,(-1)^nE_n(x)).}\]
Moreover, $\E$ maps the Vandermonde variety intersected with the Weyl chamber $\V(F_1,\dots,F_s)\cap \W_n$ homeomorphically (see Lemma 2.1 in  \cite{lien2023hyperbolic}) to the hyperbolic slice $\H_s(F)$. So a stratum $\H_s^\mu(F)$ is homeomorphic to
\[\big\{ (\underbrace{x_1,\dots,x_1}_{\text{$\mu_1$-times}},\dots,(\underbrace{x_l,\dots,x_l}_{\text{$\mu_l$-times}}) ~\big|~ x_1,\dots,x_l)\in\R^l \big\}\cap \V(F_1,\dots,F_s)\cap \W_n\]
under the Vieta map.
\end{remark}
Since $\H_s^\mu(f)$ is the image of a polyhedron intersected with a real algebraic set defined by $s$ polynomials, then in accordance with the terminology in real algebraic geometry, we call $\H_s^\mu(F)$ \emph{generic} if it contains no polynomial with at most $s-1$ distinct roots.

Note that not all compositions need to occur in $\H_s(F)$ and two distinct compositions do not necessarily give rise to distinct strata as can be seen in the following examples:
\begin{example}\label{1:ex:pyramid}
    Let $n=6$, $s=3$ and let 
    \begin{alignat*}{3}
    G&:=T^6 - \frac{21}{4} T^4 + &&T^3 &&+\frac{21}{4}T^2 - 1 \quad \text{and}\\
    H&:=T^6 - \frac{21}{4} T^4  && &&+\frac{21}{4}T^2 - 1.
    \end{alignat*}
    Consider the hyperbolic slices $\H_3(G)$ and $\H_3(H)$. One can label the strata of these hyperbolic slices by the corresponding compositions as exemplified for the $0$-dimensional strata of $\H_3(H)$ in Figure \ref{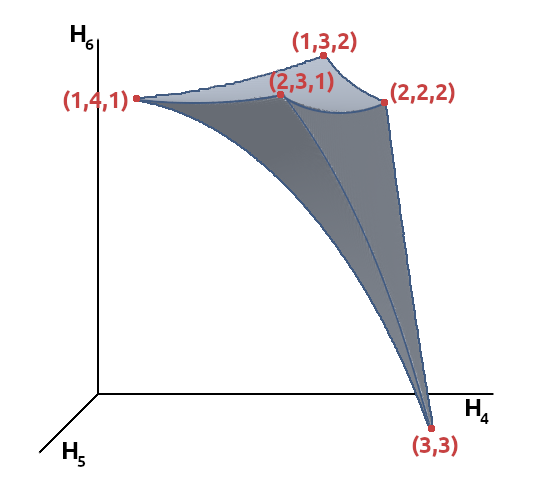}. The other strata of $\H_3(H)$ can be labeled similary, e.g. the polynomials on the blue curve between $(1,4,1)$ and $(3,3)$ have corresponding composition $(1,2,2,1)$. Note that $\H_3(H)$ is non-generic while $\H_3(G)$ is generic.
    \begin{figure}[ht]
 \centering
 \begin{subfigure}[b]{0.49\textwidth}
   \centering
   \includegraphics[width=\textwidth]{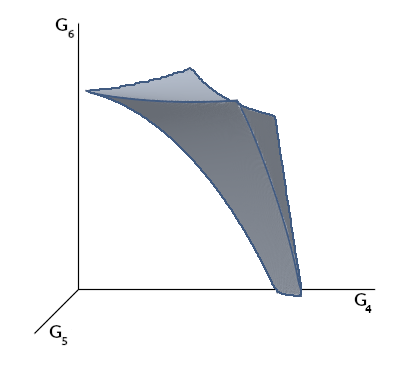}
   \caption{$\H_3(G)$}\label{generic.png}
 \end{subfigure}
  \hfill
 \begin{subfigure}[b]{0.49\textwidth}
   \centering
   \includegraphics[width=\textwidth]{non-generic_labeled.png}
   \caption{$\H_3(H)$}\label{non-generic_labeled.png}
 \end{subfigure}
 \hfill
 \caption{}
\end{figure}
\end{example}

From the examples, it looks like the strata have some nice geometric and combinatorial properties. We will present some of these geometric properties in a moment, but first note how the pictures are reminiscent of polytopes except that the strata are not convex. Thus it is natural to ask if this stratification of hyperbolic slices is always polytopal, that is, whether or not the hyperbolic poset is isomorphic to the face lattice of a polytope. We will not be able to answer this question, but we leave it as a conjecture.

\begin{conjecture}\label{2:conj:polyt}
    Hyperbolic posets are polytopal.
\end{conjecture}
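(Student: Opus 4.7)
The plan is to bootstrap from Theorem \ref{3:thm:shell} and Corollary \ref{3.cor:sph} (the dual of the hyperbolic poset is a shellable simplicial complex, hence a combinatorial sphere) by exhibiting an explicit convex realization. Since combinatorial spheres need not be polytopal in general, the additional structure must come from the geometry of hyperbolic polynomials themselves, in particular from the generalized Arnold--Meguerditichian theorem (Theorem \ref{2:thm:minmax}).

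First, I would use Theorem \ref{2:thm:minmax} to associate to each stratum $\H_s^\mu(F)$ a canonical pair of distinguished polynomials, namely its minimum and maximum with respect to $H_{s+1}$ (and, recursively, the later free coefficients). This yields distinguished boundary points for each stratum and suggests viewing the closure of each stratum as a ``face'' of a convex body whose support directions are indexed by the free coefficients. In particular, the $0$-dimensional strata correspond to polynomials with alternating single and multiple roots pinned down by these extremal conditions.

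Next, I would attempt to realize $\L_s(F)$ as the face lattice of a polytope constructed by embedding the zero-dimensional strata as points in an auxiliary Euclidean space and taking their convex hull. A natural candidate embedding is via the remaining power sums
\[ H \longmapsto (P_{s+1}(H), P_{s+2}(H),\ldots, P_n(H)), \]
computed from the coefficients of $H$ via Newton's identities. The key step is to verify that (a) under this embedding the image of each closed stratum $\overline{\H_s^\mu(F)}$ is a convex face of the polytope obtained as the convex hull of all zero-dimensional strata, and that (b) the resulting face lattice is isomorphic to $\L_s(F)$. For (a), one would exploit convexity properties of power sums restricted to the Vandermonde variety intersected with the Weyl chamber $\W_n$ (see Remark \ref{2:rem:vieta}), together with the extremal characterization given by Theorem \ref{2:thm:minmax}. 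For (b), the shellability result from Theorem \ref{3:thm:shell} would let us match dimensions, incidences and the number of facets between the candidate polytope and $\L_s(F)$.

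The main obstacle is step (a). The strata are visibly non-convex in coefficient coordinates, as the pictures in Example \ref{1:ex:pyramid} show, and there is no a priori reason that passing to power-sum coordinates (or any other natural polynomial coordinate change) convexifies the strata simultaneously in the correct incidence pattern. A fallback strategy would be to proceed abstractly: compute the $g$-vector (Corollary \ref{3:cor:g-thm}) and flag $f$-vector of the shellable sphere and try to realize the dual simplicial sphere as the boundary of a simplicial polytope by a perturbative construction built around the extremal polynomials of Theorem \ref{2:thm:minmax}. Since non-polytopal simplicial spheres already exist in dimension $4$, and hyperbolic slices can have arbitrarily large dimension $n-s$, the general case appears to require a genuinely new geometric input beyond the combinatorial data extracted from shellability, which is why we state Conjecture \ref{2:conj:polyt} rather than a theorem.
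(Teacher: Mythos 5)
This statement is an open \emph{conjecture} in the paper, not a theorem: the authors explicitly write ``We will not be able to answer this question, but we leave it as a conjecture,'' and they prove only the weaker shellability and combinatorial-sphere results. So there is no proof in the paper to compare against, and your write-up correctly ends by concluding that the conjecture rather than a theorem is the honest outcome. In that sense your discussion is consistent with the paper's stance.

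One substantive caution on your proposed step (a). You suggest taking the convex hull of the zero-dimensional strata in power-sum coordinates $(P_{s+1},\dots,P_n)$ and hoping each closed stratum maps to a face. The paper already kills the closely related version of this in coefficient coordinates: in the discussion following Example \ref{1:ex:pyramid}, for the slice $\H_3(H)$ the four vertices with compositions $(1,4,1)$, $(2,3,1)$, $(1,3,2)$, $(2,2,2)$ all lie in a single two-dimensional stratum yet are not coplanar, so $\L_s(H)$ is \emph{not} the face lattice of the convex hull of the zero-dimensional strata. Moreover, by Newton's identities $P_{s+1}$ is an affine function of $H_{s+1}$ with the earlier coefficients fixed, so at least the leading power-sum coordinate is only an affine rescaling of the leading free coefficient and cannot by itself correct this failure; for higher power sums the change of coordinates is genuinely nonlinear, but you give no argument that this nonlinearity convexifies the strata, and you acknowledge as much. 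So your ``main obstacle'' is not just a gap but is demonstrably an obstacle for the most natural version of the construction you propose. Any successful attack on Conjecture \ref{2:conj:polyt} would need a realization that is \emph{not} simply a convex hull of the vertex set in a coordinate system polynomially tied to the coefficients.

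As a minor point, the fallback remark that ``non-polytopal simplicial spheres already exist in dimension $4$'' is a reason the conjecture has content, but it is not itself an obstruction here; the conjecture concerns the primal poset $\L_s(F)$, which is generally non-simplicial, while the sphere result applies to the simplicial dual $\partial(\L_s^\Delta(F))$.
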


From the example above we see that the poset of strata $\L_s(H)$ is isomorphic to the face lattice of a pyramid. However, one can check that there is no hyperplane containing the four polynomials with composition $(1,4,1), (2,3,1), (1,3,2)$ and $(2,2,2)$ even though they are all contained in a two-dimensional stratum. Thus $\L_s(H)$ is not poset isomorphic to the face lattice of the convex hull and the convex hull is therefore not the right candidate to show polytopality in general.

As mentioned we will not be answering Conjecture \ref{2:conj:polyt} in this article. Instead we will show that hyperbolic posets possess certain traits that are similar to polytopes. For instance, we will show in the next section that the dual of $\L_s(F)$ satisfies the Upper Bound Theorem in general and the g-Theorem in the generic case.

\begin{lemma}\label{2:lem:compcontr}
    The stratum $\H_s^\mu(F)$ is contractible or empty and when $s\geq 2$ it is compact.
\end{lemma}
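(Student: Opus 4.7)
The plan is to use Remark \ref{2:rem:vieta} to identify $\H_s^\mu(F)$ homeomorphically with the semi-algebraic set $A := \V_s^\mu(F) \cap \W_l \subset \R^l$, where $l := \ell(\mu)$, so that both claims become assertions about $A$.

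For compactness when $s \geq 2$, note that $A$ is closed as an intersection of closed subsets of $\R^l$. For boundedness, lift $x \in A$ to the vector $y \in \R^n$ whose entries consist of each $x_i$ repeated $\mu_i$ times. Then $E_1(y) = E_1^\mu(x) = -F_1$ and $E_2(y) = E_2^\mu(x) = F_2$, so Newton's identity yields
$$\sum_{i=1}^l \mu_i x_i^2 = P_2(y) = E_1(y)^2 - 2 E_2(y) = F_1^2 - 2 F_2.$$
Since each $\mu_i \geq 1$, this bounds $\|x\|$, so $A$ is compact.

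For contractibility, the strategy is to construct a deformation retract of $A$ onto a single distinguished point. The cases $s \leq 1$ are easy: for $s = 0$ the set $A = \W_l$ is convex, and for $s = 1$ the set $A$ is the intersection of the Weyl chamber with a single hyperplane, again convex, hence trivially contractible. For $s \geq 2$, the natural route is a Morse-theoretic argument on the coefficient $H_{s+1}$ (equivalently, up to sign, $E_{s+1}^\mu$) restricted to $A$: show that this function attains a unique minimum on $A$ (in spirit of the Arnold-Meguerditichian-type statement Theorem \ref{2:thm:minmax}) and that its sublevel sets remain connected, so that the downward gradient-like flow produces a retract of $A$ onto the minimum. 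An alternative is to invoke or adapt the Arnold-Kostov result that Vandermonde varieties intersected with the Weyl chamber are homeomorphic to closed balls when nonempty; the argument carries over with essentially no change to the multiplicity-weighted setting.

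The main obstacle is the contractibility argument for $s \geq 2$: the defining equations of $A$ are non-linear, so convex-combination arguments are unavailable, and one has to carefully track the behaviour at the boundary of the Weyl chamber as coordinates coalesce, ensuring that the deformation stays inside $A$ and converges to a well-defined point even in the non-generic case, where the stratum contains polynomials with fewer than $s-1$ distinct roots.
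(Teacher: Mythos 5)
Your compactness argument is correct and, unlike the paper (which proves the whole lemma by citing Theorem~1.1 of Kostov~\cite{kostov1989geometric} as rephrased in~\cite{lien2023hyperbolic}), it is a clean self-contained computation: passing through the Vieta map of Remark~\ref{2:rem:vieta} and using $P_2 = E_1^2 - 2E_2$ to read off $\sum_i \mu_i x_i^2 = F_1^2 - 2F_2$ is exactly the right bound, and closedness is immediate. This part is a genuine improvement in transparency over the bare citation.

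The contractibility half is where the proposal falls short. Your primary route --- a gradient-flow/Morse argument using the unique minimizer of $E_{s+1}^\mu$ ``in the spirit of Theorem~\ref{2:thm:minmax}'' --- is dangerously close to circular: the paper's proof of existence and uniqueness of the minimal polynomial in Theorem~\ref{2:thm:minmax}(\ref{2:item:minmax1}) \emph{relies} on Lemma~\ref{2:lem:compcontr} (compactness gives existence, and the dimension/relative-interior facts from Proposition~\ref{2:prop:pointormaxd} give uniqueness). So you cannot appeal to that theorem, or to a reconstruction of it, without re-deriving those ingredients independently. Even setting circularity aside, ``sublevel sets remain connected so the downward flow gives a retract'' is a nontrivial Morse-theoretic claim on a singular semi-algebraic set with boundary (the Weyl chamber walls), and it is not established by the sketch. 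Your fallback --- invoke Arnold--Kostov that $\V(a)\cap\W_n$ is homeomorphic to a closed ball, and argue it transfers to the multiplicity-weighted $\V_s^\mu(F)\cap\W_l$ --- is in fact what the paper does (the transfer is carried out in \cite{lien2023hyperbolic}, Proposition~2.2 and Lemma~3.2), so it is correct but amounts to citing the same source rather than proving the statement. In short: the compactness part stands on its own; the contractibility part either needs Kostov's theorem (same as the paper) or a substantially more careful Morse argument that does not presuppose Theorem~\ref{2:thm:minmax}.
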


\begin{proof}
     See Theorem 1.1 in \cite{kostov1989geometric} which was rephrased to our setting in \cite{lien2023hyperbolic}, see Proposition 2.2 and Lemma 3.2.
\end{proof}

The fact that the strata are contractible has some useful implications on how the compositions are distributed in $\H_s(F)$. To talk about these, note that as a consequence of Remark \ref{2:rem:vieta} $\H_s^\mu(F)$ is a semi-algebraic set, thus when we speak about the dimension of $\H_s^\mu(F)$, it is its dimension as a semi-algebraic set.

\begin{definition}
    Let $\H^\mu_s(F)$ be a nonempty stratum of dimension $d$, then
    \begin{enumerate}
        \item the \emph{relative interior} of $\H_s^\mu(F)$ is the set of polynomials $H\in \H_s^\mu(F)$ such that an open neighbourhood of $H$ is homeomorphic to an open set in $\R^d$ and
        \item the \emph{relative boundary} of $\H_s^\mu(F)$ is the set of polynomials $\H_s^\mu(F)$ that are not in the relative interior. 
    \end{enumerate}
\end{definition}

\begin{proposition}\label{2:prop:pointormaxd}
    Suppose the stratum $\H^\mu_s(F)$ contains a polynomial with at least $s$ distinct roots, then 
    \begin{enumerate}
        \item\label{2:prop:item1} the dimension of $\H^\mu_s(F)$ is $l-s$,
        \item\label{2:prop:item2} its relative interior is $\{H\in\H^\mu_s(f)~|~c(H)=\mu\}$ and
        \item\label{2:prop:item3} it equals the closure of its relative interior.
    \end{enumerate}
    If it contains no polynomial with at least $s$ distinct roots, then the stratum is either a single polynomial or empty.
\end{proposition}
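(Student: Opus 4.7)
My plan is to work throughout in the Vieta-transported picture of Remark~\ref{2:rem:vieta}: the stratum $\H_s^\mu(F)$ is identified with the semialgebraic set $\V^\mu(F) \cap \W_l \subset \R^l$, and the subset $\{H : c(H) = \mu\}$ corresponds to the intersection with the open Weyl chamber $\W_l^\circ := \{x \in \W_l : x_1 < \cdots < x_l\}$. Under this identification, the whole argument reduces to a rank analysis of the Jacobian of
$$\phi^\mu \colon \R^l \to \R^s, \qquad \phi^\mu(x) := \bigl((-1)^i E_i^\mu(x)\bigr)_{i=1}^s.$$

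The key rank lemma is that at any $x^* \in \R^l$ whose coordinates take exactly $d$ distinct values, $J\phi^\mu(x^*)$ has rank $\min(s,d)$. Using the identity $\partial_j E_i^\mu = \mu_j \cdot E_{i-1}^{\mu-e_j}$, one checks that whenever $x_i^* = x_j^*$ the multisets defining $E_{k-1}^{\mu-e_i}(x^*)$ and $E_{k-1}^{\mu-e_j}(x^*)$ coincide, so columns $i$ and $j$ of the Jacobian equal $\mu_i v$ and $\mu_j v$ respectively for a common vector $v \in \R^s$. This bounds the rank above by $d$; the matching lower bound is a Vandermonde-type minor computation on $s$ columns indexed by $s$ distinct-valued coordinates.

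For (1) and "$\supseteq$" in (2): the hypothesis furnishes $x_0 \in \V^\mu(F) \cap \W_l$ with at least $s$ distinct coordinates, so $J\phi^\mu(x_0)$ has rank $s$ and the implicit function theorem gives that $\V^\mu(F)$ is locally a smooth $(l-s)$-dimensional submanifold near $x_0$. When $c(H_0) = \mu$, i.e.\ $x_0 \in \W_l^\circ$, the chamber walls are inactive at $x_0$, so $H_0$ has a Euclidean $(l-s)$-neighborhood in $\H_s^\mu(F)$. For "$\subseteq$" in (2) and the density (3): the same column-proportionality shows that for any coincidence $x_i^* = x_j^*$ at $x_0 \in \V^\mu(F) \cap \W_l$, the vector $\mu_j e_i - \mu_i e_j$ lies in $\ker J\phi^\mu(x_0) = T_{x_0}\V^\mu(F)$. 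Grouping the coincidences into runs $R_k$ of consecutive equal coordinates and working within each run, one can choose $v_i$ strictly increasing in $i$ subject to the single linear constraint $\sum_{i \in R_k}\mu_i v_i = y_k$ that restricts $v$ to the kernel---for instance take $v_i = \alpha_k i + \beta_k$ on each run with $\alpha_k > 0$ and $\beta_k$ adjusted. Assembling these across runs yields $v \in T_{x_0}\V^\mu(F)$ pointing strictly into $\W_l^\circ$; following $v$ along the smooth manifold $\V^\mu(F)$ produces points of $\V^\mu(F) \cap \W_l^\circ$ arbitrarily close to $x_0$. This proves (3), shows boundary points lie outside the relative interior (giving "$\subseteq$" of (2)), and yields the upper bound $\dim \H_s^\mu(F) \leq l-s$ since the semialgebraic closure of an $(l-s)$-manifold has dimension $l-s$.

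For the last sentence: if no polynomial in $\H_s^\mu(F)$ has $\geq s$ distinct roots, then every $x^* \in \V^\mu(F) \cap \W_l$ lies on some face $L_\sigma \cong \R^d$ of $\W_l$ with $d < s$ distinct values. The rank lemma applied to $\phi^\mu|_{L_\sigma}$ gives full rank $d$, so after selecting $d$ of the $s$ defining equations and applying the inverse function theorem, $\V^\mu(F) \cap L_\sigma$ is locally isolated at $x^*$. Thus $\V^\mu(F) \cap \W_l$ is a discrete set; by the compactness and contractibility of the stratum from Lemma~\ref{2:lem:compcontr}, it must be a single point or empty. The main technical obstacle is verifying that the joint tangent vector constructed in paragraph three indeed lies in the inward tangent cone of $\W_l$ at $x_0$ for arbitrary configurations of simultaneous coincidences; this is a routine but slightly delicate linear-algebra check ensuring that the per-run strictly-increasing choice is compatible with the global kernel constraint on the $y_k$.
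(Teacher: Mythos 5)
The paper does not give its own proof of this proposition---it cites Proposition~2.2, Theorem~2.6, Theorem~2.7, and Corollary~2.8 of \cite{lien2023hyperbolic}---so your argument is a genuine self-contained alternative. Your strategy (transport to $\V^\mu(F)\cap\W_l$ via the Vieta map, compute the rank of the Jacobian of $\phi^\mu$, then combine the implicit function theorem with an inward tangent vector) is the natural one for this kind of statement and is almost certainly what the cited source does in substance. Two remarks on the details.

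First, the obstacle you flag at the end---checking that the per-run strictly increasing choice is compatible with the kernel constraint---is less delicate than you suggest. As you observe, columns within a run $R_k$ are all scalar multiples of a single vector $v_k\in\R^s$, so $J\phi^\mu(x_0)w=\sum_k y_k v_k$ with $y_k=\sum_{j\in R_k}\mu_j w_j$. You may simply impose $y_k=0$ for every $k$; this always places $w$ in the kernel, regardless of how the $v_k$ interact, and on each run $y_k=\alpha_k\sum_{j\in R_k}\mu_j j+\beta_k\sum_{j\in R_k}\mu_j$ can be set to zero by choosing $\beta_k=-\alpha_k\bigl(\sum_{j\in R_k}\mu_j j\bigr)\big/\bigl(\sum_{j\in R_k}\mu_j\bigr)$ while keeping $\alpha_k>0$. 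Since indices in distinct runs already have distinct coordinate values at $x_0$, no compatibility across runs is needed. (Incidentally, passing to power sums via Newton's identities---as the paper does in its appendix, see Lemma~\ref{2:lem:powersumiffelem}---turns the Jacobian into a literal weighted Vandermonde matrix, which makes the rank $\min(s,d)$ claim immediate.)

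Second, the final paragraph has a genuine gap. You correctly show that $\V^\mu(F)\cap L_\sigma$ is locally a single point inside each open face $L_\sigma$ of $\W_l$ with $d<s$ distinct values, but you then conclude that $\V^\mu(F)\cap\W_l$ is discrete, and local isolation within each open face does not rule out a point $x^*$ on a lower face being an accumulation point of a sequence $x_n\in\V^\mu(F)\cap L_\tau$ for some adjacent higher face $L_\tau$: the local injectivity radius of the immersion $\phi^\mu|_{L_\tau}$ can degenerate as one approaches $\partial L_\tau$, so nothing you have said prevents such accumulation. The clean fix is to argue that the semialgebraic set $S:=\V^\mu(F)\cap\W_l$ has dimension $0$: if $\dim S\geq 1$, the curve selection lemma gives a nontrivial semialgebraic arc in $S$, which for a subinterval of parameters lies in a single open face $L_\tau$; on that subinterval $\phi^\mu|_{L_\tau}$ is constant, contradicting local injectivity. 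A $0$-dimensional compact semialgebraic set is finite, and finiteness plus contractibility (Lemma~\ref{2:lem:compcontr}) gives a single point. With that repair, the proof is correct.
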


\begin{proof}
    See Proposition 2.2, Theorem 2.6, Theorem 2.7 and Corollary 2.8 in \cite{lien2023hyperbolic}.
\end{proof}

\subsection{Escaping hyperbolic strata} In this subsection, we ask which polynomials of a stratum $\H_s^\mu(F)$ have a minimal (resp. maximal) first free coefficient. This was asked and answered for $\H_s(F)$ in \cite{meguerditchian1992theorem} and it turned out that the question could be fully answered by looking at the composition of the minimal (resp. maximal) polynomials. Thus they classified which polynomials in $\H_s(F)$ have the maximal first free coefficient and which have the minimal (when such polynomials exist). We shall give a similar classification, except we will restrict the domain to be any of the strata of $\H_s(F)$.

\begin{definition}
    We call $H=T^n+H_1T^{d-1}+\dots+H_d\in\H_s^\mu(F)$ a \emph{minimal} (resp. \emph{maximal}) polynomial of the stratum $\H_s^\mu(F)$ if $H_{s+1}\leq G_{s+1}$ (resp. $H_{s+1}\geq G_{s+1}$) for all $G=T^n+G_1T^{d-1}+\dots+G_d\in\H_s^\mu(F)$.
\end{definition}

As all the polynomials in $\H_s^\mu(F)$ will have an $i^{th}$ root of multiplicity at least $\mu_i$, it will be useful to mod out these multiplicities. Also, note that if a composition $\lambda$ is less than or equal to $\mu$, there is a unique composition $\nu$ such that $\lambda=(\mu_1,\dots,\mu_{\nu_1},\dots,\mu_{\nu_{l-1}+1},\dots,\mu_{\nu_l})$. Thus we define the following compositions:

\begin{definition}\label{2:def:mod}
    If $\lambda\leq\mu$, let $\lambda/\mu$ denote the composition $\nu$ such that $\lambda=(\mu_1,\dots,\mu_{\nu_1},\dots,\mu_{\nu_{l-1}+1},\dots,\mu_{\nu_l})$.
\end{definition}

To state the result, note that refer to the composition $\mu=(\mu_1,\mu_2,\dots,\mu_l)$ as an \emph{alternate odd} composition if $\mu_{l}=\mu_{l-2}=\dots=1$ and as an \emph{alternate even} composition if $\mu_{l-1}=\mu_{l-3}=\dots=1$.

\begin{restatable}{theorem}{minmax}\label{2:thm:minmax}
    Let $\lambda$ be the composition of $H\in \H^\mu_s(F)$ and let $s\geq 2$, then
    \begin{enumerate}
        \item\label{2:item:minmax1} there is a unique minimal (resp. maximal) polynomial in $\H^\mu_s(F)$ and
        \item\label{2:item:minmax2} the polynomial $H$ is minimal (resp. maximal) if and only if $\ell(\lambda)\leq s$ and $\lambda/\mu$ is less or equal to an alternate odd (resp. even) composition.
    \end{enumerate}
\end{restatable}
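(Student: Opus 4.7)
The plan is to generalize the approach of Meguerditchian, who treats the case $\mu=(1,\dots,1)$. Using Remark \ref{2:rem:vieta}, I pull the optimization problem back through the Vieta map and identify $\H^\mu_s(F)$ with $\V_s^\mu(F)\cap\W_l$, where $l=\ell(\mu)$. Under the parametrization $(x_1,\dots,x_l)\mapsto H(T)=\prod_{i=1}^l (T-x_i)^{\mu_i}$, the first free coefficient becomes $H_{s+1}=(-1)^{s+1}E_{s+1}^\mu(x)$, and a polynomial of composition $\lambda$ corresponds to those $x$ for which consecutive blocks of coordinates collapse in the pattern prescribed by $\nu=\lambda/\mu$. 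Since $s\geq 2$, Lemma \ref{2:lem:compcontr} guarantees that $\H^\mu_s(F)$ is compact, so the continuous function $H\mapsto H_{s+1}$ attains both its minimum and its maximum, which gives existence in part~(1).

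For the characterization in part~(2), I would apply Lagrange multipliers at a point $x$ lying in the relative interior of its sub-stratum $\H^\lambda_s(F)$, i.e.\ with distinct coordinates $b_1<\dots<b_{\ell(\lambda)}$. Using the identity $\partial H/\partial x_i=-\mu_i H(T)/(T-x_i)$, the critical equations can be repackaged as the existence of a polynomial $Q(T)$ of degree at most $s$ whose pairing with $H(T)/(T-b_j)$ kills the $(s+1)$-th coefficient simultaneously for every distinct root $b_j$ of $H$. The resulting linear system forces the $\ell(\lambda)$ distinct roots of $H$ to be zeros of a single polynomial of degree at most $s$, and therefore $\ell(\lambda)\leq s$. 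The alternating pattern for $\nu$ then emerges from a second-order perturbation analysis: "un-merging" a block of size $\nu_j\geq 2$ (splitting apart the corresponding collapsed $x_i$'s) moves $H$ into a higher-dimensional adjacent sub-stratum, and tracking the sign of the induced second-order change of $H_{s+1}$ through the $\mu_i$-weighted Vandermonde-type Jacobian shows that this sign alternates with the distance of $j$ from the end of $\nu$. Hence a minimum can tolerate $\nu_j\geq 2$ only at positions whose distance $\ell(\nu)-j$ from the end is odd, forcing $\nu_{\ell(\nu)}=\nu_{\ell(\nu)-2}=\dots=1$, which is exactly the condition that $\lambda/\mu$ be less than or equal to an alternate odd composition; the sign-flipped argument yields alternate even for the maximum.

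Uniqueness in part~(1) then follows from part~(2): any minimizer has $\ell(\lambda)\leq s$, so by Proposition \ref{2:prop:pointormaxd} it is the only polynomial of its composition in $\H^\mu_s(F)$, and the existence of two distinct minimizers would contradict strict monotonicity of $H_{s+1}$ along an admissible path connecting them in $\H^\mu_s(F)$. The hardest step will be the second-order sign computation: in the case $\mu=(1,\dots,1)$ handled by Meguerditchian there is a clean cancellation, but here each Lagrange identity carries $\mu_i$-weights and one must verify that the combinatorial alternation between "forced singleton" and "free" blocks of $\nu$ survives those weights — that is, that the alternate-odd/even characterization is intrinsic to the Vieta parametrization of an arbitrary stratum rather than a coincidence of the trivial composition.
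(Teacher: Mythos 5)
Your plan for the heart of part~(2)---pull back through the Vieta map, run Lagrange multipliers on $(-1)^{s+1}E_{s+1}^\mu$ (equivalently $P_{s+1}^\mu$) over $\V_s^\mu(F)$, observe that the critical equations produce a single degree-$s$ polynomial $Q$ vanishing at every distinct root so $\ell(\lambda)\leq s$, and then get the alternating pattern from a second-order sign computation---is exactly the paper's engine for the $1$-dimensional case. But there are three genuine gaps.

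First, you apply the second-order perturbation directly to a stratum $\H_s^\mu(F)$ of arbitrary dimension $l-s$, ``un-merging one block at a time.'' That only checks minimality along each $1$-dimensional adjacent substratum. The nontrivial fact that checking all such directions implies global minimality in $\H_s^\mu(F)$ is precisely Lemma~\ref{2:lem:maxiffmax} in the paper, whose proof needs the compactness of the strata, the projection $P^{n-l}$, and a sequential argument about the closed upper/lower boundary sheets (Lemma~\ref{2:lem:contupperlower}). Without such a lemma, being a local extremum in each $1$-dimensional slice does not rule out a saddle in a $2$-dimensional direction; your direct bordered-Hessian claim would need to control all the relevant leading principal minors of an $(s+l)\times(s+l)$ matrix, not just one determinant, and you have not done that.

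Second, your uniqueness argument for part~(1) is broken. You say two distinct minimizers would contradict ``strict monotonicity of $H_{s+1}$ along an admissible path connecting them,'' but two minimizers have the \emph{same} value of $H_{s+1}$, so there is no monotonicity to contradict---a path between them may increase and then decrease. The paper instead argues that the level set of the minimum is the stratum $\H_{s+1}^\mu(H)$, and if that were at least $1$-dimensional it would contain a polynomial with composition $\mu$, which maps to an interior point of $P^{n-l}(\H_s^\mu(F))$ by Lemma~\ref{2:lem:projhom} and Proposition~\ref{2:prop:pointormaxd}; an interior point of a full-dimensional set cannot have minimal first coordinate. That is a genuinely different argument and you need something like it.

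Third, the non-generic case is absent. The phrase ``\emph{less than or equal to} an alternate odd (resp.\ even) composition'' in the statement is there exactly because when $\H_s^\mu(F)$ is non-generic, the minimal polynomial may have fewer than $s$ distinct roots and its composition only sits \emph{below} an alternate-odd one. Your Lagrange argument assumes the extremizer lies in the relative interior of a sub-stratum with $\ell(\lambda)$ distinct roots and produces $\lambda/\mu$ alternate odd on the nose; you never address how to pass to the degenerate case (the paper handles this by perturbing $F$, Lemmas~\ref{2:lem:nongen} and~\ref{2:lem:nongen2}). Related to this, you only argue the ``only if'' direction: you constrain what a minimizer can look like, but you do not show that a polynomial with the stated composition actually \emph{is} the minimizer, which the paper gets from the inductive structure together with Lemma~\ref{2:lem:maxiffmax}.
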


When $s=1$ there is also a maximal polynomial for all strata, but no minimal polynomial for any strata. The maximal polynomial is then the unique polynomial with only one distinct root and it thus follows from \cite{meguerditchian1992theorem}.

Note that in the generic case, one can replace $\lambda/\mu$ being "less than or equal" by "equal" in the above theorem since no two compositions of the same length are comparable. The proof of Theorem \ref{2:thm:minmax} is based on many of the same ideas as in \cite{meguerditchian1992theorem} and \cite{arnol1986hyperbolic}, however some of their techniques do not work in this general setting and others need to be adjusted. Thus the proof is rather lengthy and technical and has therefore been relocated to the appendix. Since the theorem is an important tool for this article, we will prove the first part here and sketch the proof of the second part. However, the interested reader may wish to skip ahead to the appendix after the proof of the first part.

We start by proving the first item and we will let $l=\ell(\mu)> s$ for the proof as $\H^\mu_s(F)$ is either empty or a point if $l\leq s$ according to Proposition \ref{2:prop:pointormaxd}.

\begin{lemma}\label{2:lem:projhom}
    The map 
    \[\abb{P^{n-l}}{\H_s^\mu(F)}{\R^{l-s}}{T^n+H_1T^{n-1}+\dots+H_n}{(H_{s+1},\dots,H_l)}\]
    is a homeomorphism onto its image and the image is closed in $\R^{l-s}$.
\end{lemma}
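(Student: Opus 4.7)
The continuity of $P^{n-l}$ is immediate, since it is the restriction of a coordinate projection. My plan is to prove the rest of the statement by establishing injectivity and properness of $P^{n-l}$; a continuous, injective, proper map into a locally compact Hausdorff space is automatically a homeomorphism onto a closed subset, so this would yield both parts of the lemma at once. Via Remark~\ref{2:rem:vieta}, I identify $\H_s^\mu(F)$ with the semi-algebraic set
\[ A := \bigl\{x \in \W_l \mid (-1)^k E_k^\mu(x) = F_k \text{ for all } k \in [s] \bigr\} \]
through the map $x \mapsto \prod_i (T - x_i)^{\mu_i}$. Under this identification, $P^{n-l}$ becomes the restriction to $A$ of the polynomial map $\Phi : \W_l \to \R^l$, $x \mapsto ((-1)^k E_k^\mu(x))_{k \in [l]}$, followed by projection onto the last $l-s$ coordinates. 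Both properties of $P^{n-l}$ therefore reduce to the corresponding properties of $\Phi$ on $\W_l$.

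Properness of $\Phi$ follows (assuming $l \geq 2$; the degenerate case $l = 1$ forces $s = 0$ and $\Phi$ is linear) from the Newton-type identity
\[ P_2^\mu(x) = \sum_i \mu_i x_i^2 = (E_1^\mu(x))^2 - 2 E_2^\mu(x), \]
which expresses the positive-definite quadratic form $P_2^\mu$ in terms of the first two entries of $\Phi$: bounding $\Phi(x)$ bounds $P_2^\mu(x)$, hence $\|x\|$. For the Jacobian of $\Phi$, I would use the chain rule together with the telescoping identity
\[ E_{k-1}^{\mu - e_i}(x) = \sum_{j=0}^{k-1}(-x_i)^j E_{k-1-j}^\mu(x), \]
obtained by iterating $E_k(y) = E_k(y \setminus y_r) + y_r E_{k-1}(y \setminus y_r)$ on the expanded tuple $y = (x_1^{(\mu_1)}, \ldots, x_l^{(\mu_l)})$. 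This factors $J_\Phi(x) = \operatorname{diag}(\mu_1, \ldots, \mu_l) \cdot V(x) \cdot M(x)$, where $V(x)_{ij} = (-x_i)^{j-1}$ is Vandermonde and $M(x)$ is upper-triangular with unit diagonal, giving
\[ \det J_\Phi(x) = \pm \Bigl(\prod_i \mu_i\Bigr) \prod_{i<j}(x_i - x_j), \]
which is nonzero precisely on the open interior $\{x_1 < \cdots < x_l\} \subset \W_l$. So $\Phi$ is a proper local diffeomorphism there.

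To promote local injectivity on the interior to global injectivity on $\W_l$, I would invoke covering-space theory: the open interior is an open convex set, hence simply connected, and a proper local homeomorphism from a simply-connected space is a trivial cover of its image. Injectivity then extends to $\W_l$ by a limit argument using properness, or alternatively by induction on $\ell(\mu)$, since the boundary facets of $\W_l$ parametrize strata associated with strictly coarser compositions $\mu' < \mu$. Combining properness, injectivity, and continuity with the Hausdorff target yields that $P^{n-l}$ is a closed embedding, hence a homeomorphism onto its (closed) image. The main obstacle is this global injectivity step: pointwise invertibility of the Jacobian only yields local injectivity, and care must be taken to ensure that $\Phi$ restricted to the interior is genuinely proper as a map onto its own image, since sequences in the interior could in principle accumulate on $\partial \W_l$ whose $\Phi$-image may overlap with the interior's image.
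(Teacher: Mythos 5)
Your reduction to properness and injectivity of $\Phi$ is a reasonable plan, and both the properness argument via $P_2^\mu = (E_1^\mu)^2 - 2E_2^\mu$ and the Jacobian factorization $\det J_\Phi = \pm\bigl(\prod_i \mu_i\bigr)\prod_{i<j}(x_i-x_j)$ are correct as far as they go. The gap, which you flag yourself at the end, is the injectivity step. The claim ``a proper local homeomorphism from a simply-connected space is a trivial cover of its image'' is false as stated: the Hadamard--Caccioppoli-type argument gives that a proper local diffeomorphism is a covering of its image, but triviality of that cover requires the \emph{image} (not the domain) to be simply connected, and that is precisely what is unknown here without circularity. The two fallbacks you mention (a limit argument via properness, or induction on $\ell(\mu)$) are left unworked, and the first is complicated by exactly the issue you raise: properness of $\Phi$ on all of $\W_l$ does not automatically give properness of $\Phi$ restricted to $\operatorname{int}\W_l$ as a map onto its own image. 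Note also that the paper does not prove this lemma inline; it defers to Proposition 2.5 of the cited reference, so there is no internal proof to line up against.

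The simpler route avoids the Jacobian and covering-space machinery entirely: injectivity of $\Phi$ on all of $\W_l$ (not just its interior) is a consequence of unique factorization. Given $\Phi(x)$, one recovers the monic polynomial $\prod_i(T-x_i)^{\mu_i}$, hence its distinct roots $b_1 < \dots < b_k$ and their multiplicities $m_1,\dots,m_k$. Since $x\in\W_l$ is weakly increasing with values among the $b_j$, there are indices $0=n_0<n_1<\dots<n_k=l$ with $x_i=b_j$ for $n_{j-1}<i\le n_j$, and the constraint $m_j = \sum_{n_{j-1}<i\le n_j}\mu_i$ pins down the $n_j$ uniquely because the partial sums of $\mu$ are strictly increasing. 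Thus $x$ is recovered from $\Phi(x)$, giving injectivity directly. Combined with your correct properness argument and the standard fact that a continuous, injective, proper map into a locally compact Hausdorff space is a closed embedding, this closes the proof; as written, the covering-space step is where your argument would actually fail if pressed.
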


\begin{proof}
    See Proposition 2.5 in \cite{lien2023hyperbolic}.
\end{proof}

\begin{proof}[Proof of Item \ref{2:item:minmax1} from Theorem \ref{2:thm:minmax}]
    The statement is clear when $\H_s^\mu(F)$ is just a point so we will assume $\H_s^\mu(F)$ is $(l-s)$-dimensional. By Lemma \ref{2:lem:compcontr}, $\H_s^\mu(F)$ is compact so the existence of minimal and maximal polynomials is clear.
    \vspace{4mm}\\ 
    Let $H\in \H_s^\mu(F)$ be a minimal polynomial. To show uniqueness, we assume that $\H_{s+1}^\mu(H)$ contains another polynomial, i.e. it is of dimension $l-s-1$. By Proposition \ref{2:prop:pointormaxd}, it contains a polynomial $G$ with composition $\mu$. By Lemma \ref{2:lem:projhom} and Proposition \ref{2:prop:pointormaxd}, $P^{n-l}(\H_s^\mu(F))$ is full-dimensional with interior points corresponding to the image of the polynomials with composition $\mu$. This contradicts $G$ being minimal in $\H_s^\mu(F)$ as interior points of $P^{n-l}(\H_s^\mu(F))$ cannot have a minimal first coordinate if it is at least one dimensional. The argument for maximal polynomials is analogous.
\end{proof}

Next, we sketch the proof of the second part of Theorem \ref{2:thm:minmax} for generic hyperbolic slices. The proof is done by induction on the dimension of the strata of $\H_s(F)$. So the first step is to establish that the minimal (resp. maximal) polynomial, $H$ of $\H_s^\mu(F)$, has $s$ distinct roots and is such that $c(H)/\mu$ is alternate odd (resp. even) when $\H_s^\mu(F)$ is one-dimensional. This part of the proof is similar to some of the arguments in \cite{meguerditchian1992theorem}.

Firstly, note that a one dimensional $\H_s^\mu(F)$ is compact by Lemma \ref{2:lem:compcontr} and so it has two relative boundary points. Since relative interior points cannot be minimal or maximal, one of the two boundary points will be the minimal polynomial and the other the maximal polynomial. Also, from Proposition \ref{2:prop:pointormaxd} we get that the relative boundary of $\H_s^\mu(F)$ are polynomials with at most $s$ distinct roots.

To show that a polynomial $H=\prod_{i=1}^{s+1}(T-x_i)^{\mu_i}\in \H_s^\mu(F)$ is such that $c(H)/\mu$ is alternate odd if it is minimal and alternate even if it is maximal we use the theory of Lagrange multipliers. By looking at the Lagrangian function of our system of equations, we show that the only points in $\V_s^\mu(F)$ that are local extrema of the function $(-1)^{s+1}E_{s+1}$, are the points with at most $s$ distinct roots (see the discussion preceding Lemma \ref{lem:princminor}).

Then differently from the argument in \cite{meguerditchian1992theorem}, we are optimizing over a semi-algebraic set, not an algebraic one, so we have to show that optimizing over $\V_s^\mu(F)$ and $\V_s^\mu(F)\cap \W_{s+1}$ are equivalent in our setting. But it turns out that if $x=(x_1,...,x_{s+1})$ is a local extrema of $(-1)^{s+1}E_{s+1}$ on $\V_s^\mu(F)$, then $x$ is only close to tuples of roots corresponding to strata in $\H_s(F)$ for which $H$ is a minimal (resp. maximal) polynomial, that is, not both. Therefore we can use the Hessian criterion to characterize which of the two boundary points of $\H_s^\mu(F)$ is the minimal and maximal polynomial (see Proposition \ref{2:prop:indstart}).

Having established the theorem for the one-dimensional strata we move on to describing the induction step, which is also quite different from the argument in \cite{meguerditchian1992theorem}. So let $\H_s^\mu(F)$ be a stratum that is at least two-dimensional, then the key is firstly the following observation:

\begin{restatable*}{lemma}{maxiffmax}\label{2:lem:maxiffmax}
Let $l\geq s+2$, then the polynomial $H\in H_s^u(f)$ is minimal (resp. maximal) if and only if it is minimal (resp. maximal) for all strata that contain $H$ and that are strictly contained in $\H_s^\mu(F)$.
\end{restatable*}

Since $\H_s^\mu(F)$ is compact and of dimension at least $m\geq 2$, then by using Lemma \ref{2:lem:compcontr} and Proposition \ref{2:prop:pointormaxd} we can argue that the stratum contains at least two $(m-1)$-dimensional strata correspondin to two compositions, $\nu$and $\gamma$, of length $\ell(\mu)-1$. Next, we show that $c(H)/\mu$ is alternate odd (resp. even) if and only if $c(H)/\nu$ and $c(H)/\gamma$ are alternate odd (resp. even) (see Proposition \ref{2:prop:generic}).

Putting those two observations together let us go from having established the theorem for $k$-dimensional strata to establishing it for $(k+1)$-dimensional strata. Finally, extending the theorem to non-generic hyperbolic slices is done by perturbing the polynomial $F$ slightly (see Lemma \ref{2:lem:nongen} and Lemma \ref{2:lem:nongen2}). This works since, as we saw in Proposition \ref{2:prop:pointormaxd}, the strata equals the closure of their relative interior and since the relative interior of $\H_s^\mu(F)$ consists of the polynomials with composition $\mu$.

\section{Shellability of the dual poset}

We start in the first subsection by showing that in the generic case, the boundary complex of the dual of $\L_s(f)$ is a simplicial complex. Next, we use the results from the previous section to imitate a line shelling for polytopes thus showing that, in the generic case, the boundary complex of the dual poset is shellable and therefore a combinatorial $(d-s)$-sphere.

This has several consequences for both generic and non-generic hyperbolic slices. Thus in the second subsection, we can make use of the Upper Bound Theorem (UBT) and the g-theorem for simplicial spheres to get bounds on the number of $i$-dimensional strata in our poset.

\subsection{Shelling the dual}
For this subsection, we restrict to generic hyperbolic slices, that is, the hyperbolic slices where no polynomial has strictly less than $s$ distinct roots. Also note that when $s\leq 1$, hyperbolic posets are simplices (see the proof of Theorem 3.10 in \cite{lien2023hyperbolic}), thus we will only consider the cases when $s\geq 2$.

Recall that $\L_s(F)$ denotes the poset of strata of $\H_s(F)$ partially ordered by inclusion and so we let $\L^\Delta_s(F)$ denote the dual poset. That is, $\L^\Delta_s(F)$ is the set of strata of $H_s(F)$ partially ordered by reverse inclusion. Also, we call the poset $\partial(\L^\Delta_s(F)):=\L^\Delta_s(F)\backslash \emptyset$, the \emph{boundary complex} of $\L^\Delta_s(F)$.

\begin{lemma}\label{3:lem:sc}
    The boundary complex of $\L^\Delta_s(F)$ is a pure simplicial complex of dimension $(n-s-1)$.
\end{lemma}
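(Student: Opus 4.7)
The plan is to realize $\partial \L^\Delta_s(F)$ explicitly as a subcomplex of $2^{[n-1]}$. Each composition $\mu$ of $n$ corresponds bijectively to its set of partial sums $\mathrm{cuts}(\mu) := \{\mu_1, \mu_1+\mu_2, \dots, \mu_1+\dots+\mu_{\ell(\mu)-1}\} \subseteq [n-1]$, and the composition order becomes inclusion: $\mu \leq \mu'$ if and only if $\mathrm{cuts}(\mu) \subseteq \mathrm{cuts}(\mu')$. Setting $S_\mu := [n-1] \setminus \mathrm{cuts}(\mu)$, we have $|S_\mu| = n - \ell(\mu)$, $\mu \leq \mu'$ if and only if $S_{\mu'} \subseteq S_\mu$, and the top composition $(1,\dots,1)$ maps to $\emptyset$. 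I would then associate to each nonempty stratum $\H^\mu_s(F)$ other than $\H_s(F)$ itself the nonempty subset $S_\mu \subseteq [n-1]$.

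To verify this gives a simplicial complex, I would first argue that the map $\H^\mu_s(F) \mapsto S_\mu$ is injective in the generic setting: by Proposition \ref{2:prop:pointormaxd}, each nonempty stratum equals the closure of its relative interior $\{H : c(H) = \mu\}$, so distinct compositions produce distinct strata. Because the reverse-inclusion order on strata in $\L^\Delta_s(F)$ corresponds to ordinary inclusion on the $S_\mu$, closure under nonempty subsets is essentially free: any $\emptyset \neq S' \subseteq S_\mu$ is of the form $S_{\mu'}$ with $\mu' \geq \mu$, and then $\H^{\mu'}_s(F) \supseteq \H^\mu_s(F)$ is automatically nonempty, with $\mu' \neq (1,\dots,1)$ since $S' \neq \emptyset$.

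For dimension and purity, I first observe that in the generic case any nonempty $\H^\mu_s(F)$ must satisfy $\ell(\mu) \geq s$: otherwise every polynomial in the stratum would have fewer than $s$ distinct roots, contradicting genericity. Hence $|S_\mu| \leq n - s$, giving dimension at most $n - s - 1$. For purity, I would invoke Theorem \ref{2:thm:minmax}: every nonempty $\H^\mu_s(F)$ contains a unique minimal polynomial $H_\ast$ whose composition $\lambda := c(H_\ast)$ satisfies $\ell(\lambda) \leq s$, which genericity forces to $\ell(\lambda) = s$; Proposition \ref{2:prop:pointormaxd} then identifies $\H_s^{\lambda}(F) = \{H_\ast\}$ as a $0$-dimensional stratum inside $\H^\mu_s(F)$, producing a facet $S_{\lambda}$ of size exactly $n - s$ above $S_\mu$. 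The step I expect to need the most care is injectivity of the encoding: without genericity, distinct compositions can label the same stratum (as with $\H_3(H)$ in Example \ref{1:ex:pyramid}), and the subset model breaks; once injectivity is in hand, the rest is bookkeeping on the cut-point bijection together with the existence part of Theorem \ref{2:thm:minmax}.
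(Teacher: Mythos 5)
Your proof is correct. The cut-point encoding $\mu \mapsto S_\mu \subseteq [n-1]$ is essentially the same bijection the paper uses (the paper's $\phi(\mu) = \{\mu_1, \mu_1+\mu_2, \dots, n\}$ always contains $n$, so it is a relabelling of subsets of $[n-1]$; you just compose with complementation so that the dual order becomes ordinary inclusion directly). The argument for closure under subsets via Proposition~\ref{2:prop:pointormaxd} and the observation that $\ell(\mu) \geq s$ in the generic case also match the paper.

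The genuine departure is in purity and dimension. The paper dispatches both in one sentence by citing Proposition~3.3 of \cite{lien2023hyperbolic}, so its proof is not self-contained. You instead derive both directly from the existence part of Theorem~\ref{2:thm:minmax}: the minimal polynomial of a nonempty stratum $\H_s^\mu(F)$ has composition $\lambda$ with $\ell(\lambda) = s$ by genericity, and $\H_s^\lambda(F)$ is then a $0$-dimensional stratum sitting inside $\H_s^\mu(F)$, so every $S_\mu$ lies below a facet $S_\lambda$ of size $n-s$; the dimension bound is immediate from $\ell(\mu) \geq s$. This is a nice improvement: it keeps the argument local to the tools this paper actually develops rather than outsourcing the key structural fact, and it makes explicit \emph{why} the complex is pure (every stratum has a $0$-dimensional face, namely the extremum from Theorem~\ref{2:thm:minmax}). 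The one thing worth flagging is that you correctly restrict to the generic case for injectivity of $\mu \mapsto \H_s^\mu(F)$; the paper glosses over this by speaking of "the poset of compositions that occur" rather than the poset of strata, which amounts to the same identification.
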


\begin{proof}
Let $\phi$ be the mapping defined by $\mu \mapsto \{\mu_1,\mu_1+\mu_2,\dots,n\}$ from the poset of compositions of $n$ to the poset of subsets of $[n]$, partially ordered by inclusion. One can easily check that $\phi$ is a poset isomorphism and since the poset of subsets of $[n]$ is a simplex, then so is the poset of compositions.

The poset $\L_s(F)$ can be identified with the poset of compositions that occur in $\H_s(F)$, thus the boundary complex of the dual poset can be thought of as the set $$\{c(H)~|~\ H\in\H_s(F)\},$$ partially ordered by the reverse of our partial order on compositions.

From Proposition \ref{2:prop:pointormaxd}, we know that if a polynomial $H\in \H_s(F)$ has at least $s$ distinct roots, then all the compositions greater than $c(H)$ occur in $\H_s(F)$. Thus, the set of compositions $\{\mu~|~c(H)\leq \mu\}$, is a downwardly closed subposet of the dual poset of compositions. Thus it is a simplex and so $\partial(\L^\Delta_s(F))$ is a simplicial complex. Lastly, from Proposition 3.3 in \cite{lien2023hyperbolic}, we have that $\L^\Delta_s(F)$ is pure and of dimension $n-s-1$.
\end{proof}

\begin{remark}
    The restriction to the generic case in \ref{3:lem:sc} is sufficient, but not necessary. That is, there are examples of non-generic hyperbolic slices where the boundary complex, $\partial(\L^\Delta_s(F))$, is a simplicial complex and examples where it is not. However, the same kind of argument as in Lemma \ref{3:lem:sc} can be used to show that if we remove the empty set and the $0$-dimensional strata from $\L_s(F)$, then the dual poset is a simplicial complex even for non-generic cases.
\end{remark}

We will construct a shelling of $\partial(\L^\Delta_s(F))$ and to do so we shall use a partial order on the zero-dimensional strata of $\H_s(F)$. So let $\gamma_1,\dots,\gamma_k$ be the compositions of length $s$ that occur in $\H_s(F)$, then $F_1:=\H^{\gamma_1}_s(F),\dots,F_k:=\H^{\gamma_k}_s(F)$ are the facets of $\partial(\L^\Delta_s(F))$.

\begin{definition}\label{3:def:shell}
Let "$\leq_p $" denote the partial order on $F_1,\dots,F_k$ that is generated by the covering relations $\{H\}=F_i<_p F_j=\{G\}$ if there is a one-dimensional stratum $R$ of $\H_s(F)$ for which $H$ is minimal and $G$ is maximal.
\end{definition}

\begin{lemma}\label{3:lem:link}
Let $S$ be a stratum of $\H_s(F)$. If $H\in F_i$ is the minimal (resp. maximal) polynomial of the stratum $S$ and $F_j \subseteq S$, then $F_i\leq_p F_j$ (resp. $F_i\geq_p F_j$).
\end{lemma}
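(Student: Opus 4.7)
The proof plan proceeds by induction on $d = \dim S$; the maximum case is handled symmetrically throughout, swapping ``minimal'' with ``maximal'' and ``alternate odd'' with ``alternate even''. The base $d = 0$ is immediate since $S = F_i = F_j$. For $d = 1$, Lemma \ref{2:lem:compcontr} and Proposition \ref{2:prop:pointormaxd} yield that $S$ is homeomorphic to a closed interval whose two relative boundary points are its $0$-dimensional substrata, and Theorem \ref{2:thm:minmax} identifies these as the unique minimum $H$ and maximum $H'$ of $S$. Hence $F_j \in \{F_i, \{H'\}\}$, and in the nontrivial case $F_i <_p F_j$ follows directly from Definition \ref{3:def:shell}.

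For the inductive step assume $d \geq 2$ and let $G$ be the polynomial in $F_j$ with $G \neq H$. Let $S^\star \subseteq S$ denote the smallest stratum containing both $H$ and $G$; under the subset identification $\phi$ introduced in the proof of Lemma \ref{3:lem:sc}, the composition of $S^\star$ corresponds to $\phi(c(H)) \cup \phi(c(G)) \subseteq \phi(\mu)$, where $\mu$ is the composition of $S$. If $S^\star \subsetneq S$, then because $d \geq 2$ Lemma \ref{2:lem:maxiffmax} applies and gives that $H$ is still minimal in $S^\star$, so the inductive hypothesis applied in $S^\star$ yields $F_i \leq_p F_j$ directly.

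The main obstacle is the remaining case $\phi(c(H)) \cup \phi(c(G)) = \phi(\mu)$. The plan here is to take one covering step out of $F_i$ before reducing: pick a $1$-dimensional substratum $R \subseteq S$ containing $H$, noting that $H = \min R$ automatically by Lemma \ref{2:lem:maxiffmax}, and choose $R$ so that $H' := \max R$ and $G$ lie in a common proper substratum $S^{\star\star} \subsetneq S$ in which $H'$ is the minimum. Then $F_i <_p \{H'\}$ by Definition \ref{3:def:shell} and $\{H'\} \leq_p F_j$ by the inductive hypothesis applied in $S^{\star\star}$. Producing such $R$ is the combinatorial heart of the argument: using the alternate-odd description of $c(H)/\mu$ from Theorem \ref{2:thm:minmax}, one selects an element $e \in \phi(\mu) \setminus \phi(c(H))$ so that the alternate-even drop $f$ from $\phi(c(R)) := \phi(c(H)) \cup \{e\}$ lies in $\phi(c(H)) \setminus \phi(c(G))$; then $S^{\star\star}$ can be taken to correspond to $\phi(\mu) \setminus \{f\}$, containing both $H'$ and $G$, and the alternate-odd rigidity of $c(H)/\mu$ translates into $c(H')/c(S^{\star\star})$ again being alternate-odd, identifying $H'$ as the minimum of $S^{\star\star}$ via Theorem \ref{2:thm:minmax}. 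I expect this combinatorial selection of $e$ (and the accompanying verification of alternate-odd inheritance) to be the most delicate step.
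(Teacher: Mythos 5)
Your approach — induction on $\dim S$, with the hard case being when the smallest common substratum of $H$ and $G$ is all of $S$ — is genuinely different from the paper's, but the hard case has a real gap.

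The paper does not induct on dimension. It starts at $G$ and walks: since $G$ is not minimal in $S$, Lemma \ref{2:lem:maxiffmax} produces a one-dimensional stratum $R_1\subseteq S$ whose maximum is $G$; letting $Q$ be the minimum of $R_1$ gives a covering relation $\{Q\}<_p F_j$. If $Q\neq H$, repeat. Termination is automatic: each step strictly decreases the first free coefficient, $S$ has finitely many zero-dimensional strata, and the only vertex of $S$ that is minimal for every one-dimensional stratum through it is $H$ (again by Lemma \ref{2:lem:maxiffmax}). No single step needs to lower the dimension of the ambient stratum.

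Your plan instead tries to make a \emph{single} controlled step out of $H$ to reach a strictly smaller stratum $S^{\star\star}$ containing $G$, and this is where the argument breaks. You write that one ``selects $e\in\phi(\mu)\setminus\phi(c(H))$ so that the alternate-even drop $f$ from $\phi(c(R))$ lies in $\phi(c(H))\setminus\phi(c(G))$'', but the element $f$ — i.e. which alternate-even composition is actually the maximum of $R$ — is a geometric fact about the particular slice $\H_s(F)$, not a combinatorial consequence of the choice of $e$. For a fixed $e$, the maximum of $R$ could be any of the $\lceil s/2\rceil$ alternate-even candidates $q_s,q_{s-2},\dots$ (all of which lie in $\phi(c(H))$), and when $d$ is small compared to $s$ the bad set $\phi(c(H))\cap\phi(c(G))\setminus\{n\}$ has size $s-d-1$, easily large enough that every candidate for $f$ could land there regardless of $e$. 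So the existence of the required $e$ is not established; you have only shown there \emph{exists some} alternate-even drop in the good set, not that the actual maximum realizes it. There is a second unverified step even granting a good $e$: you still need $c(H')/c(S^{\star\star})$ to be alternate odd so that Theorem \ref{2:thm:minmax} identifies $H'$ as the minimum of $S^{\star\star}$, and the claimed ``alternate-odd inheritance'' is asserted but not proved. If you try to repair the hard case by iterating when the step fails, you effectively recover the paper's walking argument — in which case the cleaner route is to do the walk from the start and appeal to the coefficient-monotonicity termination argument rather than to dimension induction.
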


\begin{proof}
Since $H$ is minimal in $S$, then either $F_i=F_j$ or there is a one-dimensional stratum, $R_1\subseteq S$, for which $G\in F_j$ is maximal. Otherwise $G$ would be minimal in $S$ by Lemma \ref{2:lem:maxiffmax}. By Theorem \ref{2:thm:minmax}, the stratum $R_1$ also contains a minimal polynomial $Q\in F_m$ for some $m$ and therefore $F_m<_p F_j$.

And by the same argument as above, either $Q=H$ or there must be a one-dimensional stratum $R_2\subseteq S$, for which $Q$ is maximal. We see that by continuing this process we must eventually end up at $H$ and so $F_i\leq_p F_j$. The argument for maximal polynomials is analogous.
\end{proof}

\begin{definition}
Let $\leq$ and $\leq^*$ be partial orders on a set $P$. Then we say $\leq$ is \textbf{finer} than $\leq^*$ if $a\leq^* b$, for some $a,b\in P$, implies $a\leq b$.
\end{definition}

\begin{theorem}\label{3:thm:shell}
    Let $\leq$ be a total order on $\{F_1,\dots,F_k\}$ that is finer than $\leq_p$, then the total order (and its reverse order) induces a shelling of $\partial(\L_s^\Delta(F))$.
\end{theorem}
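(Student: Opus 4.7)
The plan is to imitate a line shelling from polytope theory by tracking, for each facet $F_i = \{H_i\}$, which of its ridges lie in the previously shelled part. The key claim is that the ridges of $F_i$ contained in $\bigcup_{j<i}F_j$ are precisely those corresponding to $1$-dimensional strata $R$ containing $H_i$ in which $H_i$ is the \emph{maximal} polynomial, and that these ridges already cover all of $F_i \cap \bigcup_{j<i}F_j$.

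First I would set up the correspondence: the simplicial faces of $F_i$ are in bijection with strata $S$ containing $H_i$, where larger $S$ corresponds to a smaller simplicial face. In particular, ridges of $F_i$ correspond to $1$-dimensional strata containing $H_i$. By Theorem~\ref{2:thm:minmax}, such an $R$ has exactly two corner polynomials $H_i$ and some $H_{j'}$, one minimal and one maximal, and the only facets of the complex containing $R$ are $F_i$ and $F_{j'}$. Hence $R \subseteq \bigcup_{j<i}F_j$ iff $j' < i$, which is equivalent (since $\leq$ refines $\leq_p$) to $F_{j'} <_p F_i$, i.e.\ to $H_i$ being maximal in $R$.

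Next I would show that any face $\tau$ of $F_i$ lying in $\bigcup_{j<i}F_j$ is contained in such a ridge. If $\tau \subseteq F_j$ for some $j<i$, then the stratum $S$ corresponding to $\tau$ contains both $H_i$ and $H_j$; and $H_i$ cannot be minimal in $S$, because Lemma~\ref{3:lem:link} would then give $F_i \leq_p F_j$, contradicting $j<i$. I would then induct on $\dim S$ to produce a $1$-dimensional stratum $R \subseteq S$ containing $H_i$ with $H_i$ maximal: the base case $\dim S = 1$ is immediate from Theorem~\ref{2:thm:minmax}, and for $\dim S \geq 2$, Lemma~\ref{2:lem:maxiffmax} yields a proper substratum $T \subsetneq S$ with $H_i \in T$ where $H_i$ is still not minimal, forcing $\dim T \geq 1$ so that the induction hypothesis applies. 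The ridge of $F_i$ corresponding to $R$ then contains $\tau$ and lies in $\bigcup_{j<i}F_j$, giving purity of dimension $n-s-2$.

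Finally, nonemptiness of $F_i \cap \bigcup_{j<i}F_j$ for $i \geq 2$ follows from the existence of a unique minimum of $\leq_p$: Theorem~\ref{2:thm:minmax} applied to the top stratum $\H_s(F) = \H_s^{(1,\ldots,1)}(F)$ produces a unique minimal polynomial $H^*$, and Lemma~\ref{3:lem:link} then gives $F^* := \{H^*\} \leq_p F_j$ for every $j$, so $F^*$ must be first in every linear extension. For $i \geq 2$, $F_i$ is therefore not minimal in $\leq_p$, and unwinding the generating relations yields a $1$-dimensional stratum with $H_i$ maximal and the other endpoint $F_{j'}$ satisfying $F_{j'} <_p F_i$, hence $j' < i$. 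The reverse ordering is handled by the symmetric argument with ``minimal'' and ``maximal'' interchanged, using the unique maximal polynomial of $\H_s(F)$. I expect the main technical obstacle to be the inductive construction of the $1$-dimensional stratum $R$, which depends crucially on the strengthened Arnold--Meguerditchian theorem.
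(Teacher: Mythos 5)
Your proposal is correct and follows essentially the same strategy as the paper's proof: translate the shelling condition into a statement about one-dimensional strata and their minimal/maximal polynomials, use Lemma~\ref{3:lem:link} to show $H_i$ cannot be minimal in the smallest stratum $S$ containing both $H_i$ and a prior $H_j$, then iterate Lemma~\ref{2:lem:maxiffmax} to descend to a one-dimensional stratum $R\subset S$ in which $H_i$ is maximal, with the other endpoint lying in a prior facet because $\leq$ refines $\leq_p$. The only difference is one of exposition: you spell out the descent from $S$ to $R$ as an explicit induction on $\dim S$ and treat nonemptiness of $F_i\cap\bigcup_{j<i}F_j$ as a separate step via the unique minimum $F^*$ of $\leq_p$, whereas the paper compresses the descent into a single appeal to Lemma~\ref{2:lem:maxiffmax} and obtains nonemptiness implicitly from its restatement of the shelling criterion (which, for each $j<i$, produces a qualifying ridge). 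These elaborations are sound and arguably clearer; the "exactly two facets contain $R$" claim you state in passing is true (and is proved in Corollary~\ref{3.cor:sph}) but is not actually needed for the direction of the argument that purity requires.
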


\begin{proof}
We can assume by relabelling that $F_1< \dots < F_k$. As we are shelling the boundary complex of the dual poset we will first rephrase Definition \ref{3:def:shell} to suit our setting: 

$F_1,\dots,F_k$ is a shelling of $\partial(\L_s^\Delta(F))$ if for any $i\in \{2,\dots,k\}$ and any $j\in [i-1]$, there is an $r\in [i-1]$ such that the minimal stratum containing both $F_i$ and $F_j$ also contains a one-dimensional stratum, $R$, which contains both $F_i$ and $F_r$. Note that this guarantees that in the dual poset, the intersection of the facets $F_i$ and $F_j$ is contained in the ridge $R$, which again is contained in the facets $F_i$ and $F_r$.

By Lemma \ref{3:lem:sc}, the boundary complex of the dual poset is simplicial, thus there is a smallest stratum, $S$, containing both $F_i$ and $F_j$. The polynomial $H\in F_i$ cannot be the minimal polynomial of $S$, otherwise $F_j<_p F_i$ by Lemma \ref{3:lem:link}, which would contradict $\leq$ being finer than $\leq_p$. 

So by Lemma \ref{2:lem:maxiffmax}, $H$ is maximal for a one-dimensional stratum $R\subset S$. Let $G\in F_r$ be the minimal polynomial of $R$. then $F_r<_p F_i$ by Lemma \ref{3:lem:link} and therefore $F_r<F_i$ since $\leq$ refines $\leq_p$ and so $r\in[i-1]$.
\end{proof}

\begin{corollary}\label{3.cor:sph}
    The boundary complex $\partial(\L_s^\Delta(F))$ is a combinatorial $(n-s-1)$-sphere.
\end{corollary}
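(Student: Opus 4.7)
The plan is to derive the corollary from Proposition \ref{prelim:plsphere}. We already have two of the three ingredients for free: Lemma \ref{3:lem:sc} tells us that $\partial(\L_s^\Delta(F))$ is a pure simplicial complex of dimension $n-s-1$, and Theorem \ref{3:thm:shell} provides a shelling. The remaining hypothesis is that every ridge of $\partial(\L_s^\Delta(F))$ lies in exactly two facets, which is the only nontrivial thing left to verify.

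I would translate this hypothesis back to the original (undualized) poset: facets of $\partial(\L_s^\Delta(F))$ correspond to $0$-dimensional strata of $\H_s(F)$ and ridges correspond to $1$-dimensional strata, with a ridge contained in a facet precisely when the corresponding $0$-dimensional stratum is a subset of the $1$-dimensional stratum. So the task reduces to showing that each $1$-dimensional stratum $\H_s^\lambda(F)$ contains exactly two $0$-dimensional strata.

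For this, I would combine the geometric and combinatorial descriptions of a $1$-dimensional stratum. First, since $\dim \H_s^\lambda(F)=\ell(\lambda)-s=1$ by Proposition \ref{2:prop:pointormaxd}, we have $\ell(\lambda)=s+1$. By Lemma \ref{2:lem:compcontr} the stratum is compact and contractible, and by Lemma \ref{2:lem:projhom} the projection $P^{n-(s+1)}$ embeds it homeomorphically as a closed subset of $\R^{1}$. Such a subset must be a closed interval $[a,b]$ with $a<b$ (otherwise the stratum would be a single point, contradicting its $1$-dimensionality). Hence $\H_s^\lambda(F)$ is homeomorphic to a closed interval and its relative boundary consists of exactly two distinct points. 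By Proposition \ref{2:prop:pointormaxd}(3) these two boundary polynomials have fewer than $s+1=\ell(\lambda)$ distinct roots; in the generic setting they cannot have fewer than $s$ distinct roots either, so they each have exactly $s$, and so each one is a $0$-dimensional stratum. Finally, Theorem \ref{2:thm:minmax}(1) identifies these two points as the unique minimum and maximum polynomials of $\H_s^\lambda(F)$, confirming that there are exactly two such $0$-dimensional strata.

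There is no real obstacle here, since all the structural work has been done in Sections 2 and 3; the only subtle point is matching the combinatorial statement "each ridge lies in exactly two facets" with the concrete geometric picture of a $1$-dimensional stratum as a closed interval with two distinct endpoints. Once that identification is made, Proposition \ref{prelim:plsphere} yields the corollary directly.
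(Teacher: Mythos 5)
Your proof is correct and follows essentially the same path as the paper's: both identify ridges of $\partial(\L_s^\Delta(F))$ with one-dimensional strata of $\H_s(F)$, use compactness (via Lemma \ref{2:lem:compcontr}) and the projection homeomorphism (via Lemma \ref{2:lem:projhom}) to see that such a stratum is an interval with exactly two boundary points, apply Proposition \ref{2:prop:pointormaxd} together with genericity to conclude that each endpoint has exactly $s$ distinct roots (hence corresponds to a facet), and finish with Proposition \ref{prelim:plsphere}. Your version spells out a few intermediate steps (e.g.\ explicitly invoking the homeomorphism onto a closed subset of $\R$) that the paper leaves implicit, but there is no difference in substance.
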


\begin{proof}
    Any ridge of $\partial(\L_s^\Delta(F))$ corresponds to an edge $\H_s^\mu(F)\in\L_s(F)$. By Lemma \ref{2:lem:compcontr}, $\H_s^\mu(F)$ is compact and thus has two endpoints. By Proposition \ref{2:prop:pointormaxd}, those endpoints are polynomials with $s$ distinct roots and they have distinct compositions. Thus there are exactly two vertices in $\H_s^\mu(F)$, that is, any ridge in $\partial(\L_s^\Delta(F))$ is contained in exactly two facets. So from Proposition \ref{prelim:plsphere}, $\partial(\L_s^\Delta(F))$ is a combinatorial $(n-s-1)$-sphere.
\end{proof}

\subsection{UBT and g-theorem}

Due to Corollary \ref{3.cor:sph}, we can make use of some previously established results for simplicial spheres to say something about the number of $i$-dimensional strata in $L_s(F)$.

\begin{definition}
    Let $d=\dim(\H_s(F))$ and for $i\in \{0,1,\dots,d\}$, let $f_i$ denote the number of $i$-dimensional strata of $\H_s(F)$. Then $(f_0,\dots,f_d)$ is the \emph{f-vector} of $\L_s(F)$. 
\end{definition}

As we are looking at the dual poset of $\L_s(F)$, note that generically $f_i$ is the number of $(d-i-1)$-dimensional simplices in $\L_s^\Delta(F)$ (we consider the empty set to have dimension $-1$). Thus $(f_d,\dots,f_0)$ is the f-vector of the simplicial complex $\partial(\L_s^\Delta(F))$. Although the f-vector has an easy interpretation, it is often more convenient to work with the \emph{h-vector}, $(h_0,..,h_d)$, of $\partial(\L_s^\Delta(F))$, where $$h_i=\sum_{j=0}^i(-1)^{i-j}\binom{d-j}{i-j}f_{d-j}.$$

Note that when $\H_s(F)$ is generic, then this definition is the same as the usual definition for simplicial complexes (see Definition 8.18 in \cite{ziegler2012lectures}) since the simplicial complex $\partial(\L_s^\Delta(F))$ has the f-vector $(f_d,\dots,f_0)$. We can pass from the h-vector to the f-vector by using the following relations (see page 249 of \cite{ziegler2012lectures}): $$f_{d-i}=\sum_{j=0}^i\binom{d-j}{i-j}h_j.$$ In our setting the h-vector has the following interpretation:

\begin{corollary}\label{3:cor:palind}
Let $(h_0,\dots,h_d)$ be the $h$-vector of $\partial(\L_s^\Delta(F))$. Then $h_i$ is the number of polynomials in $\H_s(F)$ that are maximal for exactly $i$ one-dimensional strata. Similarly, $h_i$ is also the number of polynomials in $\H_s(F)$ that are minimal for exactly $i$ one-dimensional strata.
\end{corollary}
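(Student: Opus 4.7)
The approach is to apply the standard combinatorial interpretation of the $h$-vector of a shellable simplicial complex: given any shelling $F_1,\dots,F_k$, if one defines the \emph{restriction} of $F_j$ to be $\mathcal{R}(F_j) := \{v \in F_j : F_j\setminus\{v\} \subseteq F_i \text{ for some } i<j\}$, then $h_i = |\{j : |\mathcal{R}(F_j)|=i\}|$ (see, e.g., Theorem 8.19 in \cite{ziegler2012lectures}). The plan is therefore to take a shelling furnished by Theorem \ref{3:thm:shell} and translate this restriction count into the language of maximal polynomials of one-dimensional strata.

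For a facet $F_j = \{H\}$, its vertices (as a simplex of dimension $n-s-1$) are in bijection with the ridges of $\partial(\L_s^\Delta(F))$ that it contains, and via the identification in Lemma \ref{3:lem:sc} these ridges correspond bijectively to the one-dimensional strata of $\H_s(F)$ containing $H$. Hence $|\mathcal{R}(F_j)|$ counts those one-dimensional strata $R$ through $H$ whose other endpoint $\{G\} = F_i$ satisfies $i<j$.

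By Proposition \ref{2:prop:pointormaxd} and Theorem \ref{2:thm:minmax}, each one-dimensional stratum $R$ has exactly two endpoints, namely its unique minimal polynomial and its unique maximal polynomial. Lemma \ref{3:lem:link} tells us that the minimal endpoint is strictly below the maximal one in $\leq_p$, and hence also in any total order refining $\leq_p$. Therefore $F_i<F_j$ in the shelling precisely when $H$ is the maximal polynomial of $R$, so $|\mathcal{R}(F_j)|$ is exactly the number of one-dimensional strata in which $H$ is maximal. This will yield the first claim.

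For the second claim, Theorem \ref{3:thm:shell} also provides that the reverse order induces a shelling. Since the $h$-vector is an invariant of the simplicial complex independent of the choice of shelling, re-running the same argument with this reverse shelling (which interchanges the roles of "earlier" and "later") delivers $h_i$ as the number of polynomials in $\H_s(F)$ that are minimal for exactly $i$ one-dimensional strata. The main point I expect to have to set up carefully is the bijection between vertices of $F_j$ and one-dimensional strata through $H$; this is what requires the genericity assumption, which via Proposition \ref{2:prop:pointormaxd} guarantees that every refinement of $c(H)$ of length $s+1$ actually occurs as a stratum of $\H_s(F)$.
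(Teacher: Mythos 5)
Your proof is correct and follows essentially the same route as the paper: both use the shelling restriction-set interpretation of the $h$-vector (section 8.3 of \cite{ziegler2012lectures}), identify vertices of the facet $F_j=\{H\}$ with the one-dimensional strata through $H$ (valid by genericity via Lemma \ref{3:lem:sc} and Proposition \ref{2:prop:pointormaxd}), note that the minimal endpoint of a one-dimensional stratum always precedes the maximal endpoint in the chosen total order refining $\leq_p$, and then run the same argument on the reverse shelling for the second claim.
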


\begin{proof}
Let again $\leq$ be a total order on $\{F_1,\dots,F_k\}$ that is finer than $\leq_p$ and assume that $F_1< \dots <F_k$, then by Theorem \ref{3:thm:shell}, $F_1,\dots,F_k$ is a shelling of $\partial(\L_s^\Delta(F))$. We denote by $V_j$ the set of vertices of $F_j$ and by $R_j\subseteq V_j$ the \emph{restriction} of $F_j$, which is defined as the subset of vertices of $F_j$, such that for every $v\in R_j$ the set $V_j \setminus \{v\}$ lies in $F_m$ for some $m<j$. Then from the first part of section 8.3 in \cite{ziegler2012lectures} we have that $h_i$ is equal to
\[|\{j:|R_j|=i\}|.\]
Let $v\in \R_j$ and let $m<j$, such that $V_j\setminus\{v\}\subset F_m$. Then $F_m$ and $F_j$ are joined by a one-dimensional stratum $E$ of $\H_s(F)$ and since $F_m< F_j$, then $H\in F_j$ is maximal in $E$. Conversely, for any one-dimensional stratum $E'$ of $\H_s(F)$ such that $H_j\in F_j$ is maximal and $H_r\in F_r$ is minimal in $E'$, we have that $F_r<F_j$ and $V_j\backslash \{v\}\subset F_r$ for some $v\in V_j$.

Thus $|R_j|$ counts the number of one-dimensional strata of $\H_s(F)$ for which $H\in F_j$ is maximal. And so $h_i$ counts the number of zero-dimensional strata that are maximal for exactly $i$ one-dimensional strata. If we now take the reverse order (which by Theorem \ref{3:thm:shell} is also a shelling), then with an analogous argument we find that $h_i$ is equal to the number of vertices that are minimal for exactly $i$ one-dimensional strata.
\end{proof}

If a polynomial is maximal for $i$ one-dimensional strata, it must be minimal for the other $n-s-i$ one-dimensional strata that contain it. Thus Corollary \ref{3:cor:palind} implies that the $h$-vector of $\partial(\L_s^\Delta(F))$ must be palindromic. That is, it satisfies the \emph{Dehn-Sommerville equations}: 
\[h_i=h_{d-i} \ \text{ for all } \ i \in \lfloor d/2\rfloor.\]

Moreover, since $\partial(\L_s^\Delta(H))$ is a combinatorial sphere, we can obtain further properties of its $h$-vector from the $g$-conjecture for simplicial spheres that was recently proven in \cite{adiprasito2018combinatorial}. In order to state those results, we have to introduce some notation.

Firstly, for $k,i\in \N$ there are unique integers $a_i\geq \dots\geq a_1\geq 0$ such that
\[\binom{a_i}{i}+\binom{a_{i-1}}{i-1}+\dots+\binom{a_1}{1} \text{ (see page 265 in \cite{ziegler2012lectures})}.\]
\begin{definition}
We say that $g=(g_0,\dots,g_r)\in \mathbb{N}_0^r$ is a \emph{Macaulay (or $M$-) vector}, if $g_0=1$ and for any $i\in [r-i]$
\[g_{i+1}\leq \binom{a_i+1}{i+1}+\binom{a_{i-1}+1}{i}+\dots+\binom{a_1+1}{1+1},\]
where
\[g_i=\binom{a_i}{i}+\binom{a_{i-1}}{i-1}+\dots+\binom{a_1}{1}\] 
is the unique representation of $g_i$ introduced above.
\end{definition}

\begin{corollary}["g-theorem"]\label{3:cor:g-thm}
    Let $\H_s(F)$ be generic, then the $h$-vector $(h_0,\dots,h_{n-s})$ of $\partial(\L_s^\Delta(F))$ satisfies
    \begin{enumerate}
        \item $h_i = h_{n-s-i} \text{ for all } i\leq \floor{(n-s)/2}$ (Dehn-Sommerville),
        \item $h_i\geq h_{i-1} \text{ for all } i\leq\floor{(n-s)/2}$ (lower bound) and
        \item $(h_0,h_1-h_0,\dots,h_{\floor{(n-s)/2}}-h_{\floor{(n-s)/2}-1})$ is a Macaulay vector.
    \end{enumerate}
\end{corollary}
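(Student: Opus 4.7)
The plan is to interpret all three items as immediate consequences of structural results already established (or cited) in the paper, once we pin down that the $h$-vector appearing in the statement is the ordinary $h$-vector of the simplicial complex $\partial(\L_s^\Delta(F))$. First, I would note that by Corollary \ref{3.cor:sph} the boundary complex $\partial(\L_s^\Delta(F))$ is a combinatorial $(n-s-1)$-sphere, and that in the generic case Lemma \ref{3:lem:sc} tells us it is a genuine pure simplicial complex of dimension $n-s-1$. Setting $d=\dim \H_s(F)=n-s$, the strata of dimension $i$ of $\H_s(F)$ correspond to faces of dimension $d-i-1$ of $\partial(\L_s^\Delta(F))$, so the $f$-vector $(f_d,\dots,f_0)$ of $\L_s(F)$ is exactly the face-number sequence of $\partial(\L_s^\Delta(F))$. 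Hence the $h$-vector defined in the paper matches the standard combinatorial $h$-vector (as in Definition 8.18 of \cite{ziegler2012lectures}), so we are free to invoke the standard machinery.

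Next I would dispatch the three items separately. Item (1), the Dehn--Sommerville relations, is already contained in the proof of Corollary \ref{3:cor:palind}: every zero-dimensional stratum contained in exactly $n-s$ one-dimensional strata must be minimal for $i$ of them iff it is maximal for $n-s-i$ of them. Matching the two shelling-interpretations of $h_i$ (from the chosen linear extension and its reverse, both of which are shellings by Theorem \ref{3:thm:shell}) gives $h_i=h_{n-s-i}$. For item (2), the lower bound $h_i\geq h_{i-1}$ for $i\leq \lfloor(n-s)/2\rfloor$, and item (3), that the vector of successive differences of the first half of the $h$-vector is an $M$-vector, I would apply the $g$-theorem for simplicial spheres. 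The combinatorial sphere conclusion from Corollary \ref{3.cor:sph} puts $\partial(\L_s^\Delta(F))$ inside the class for which Adiprasito's resolution of the $g$-conjecture \cite{adiprasito2018combinatorial} applies, giving both (2) and (3) at once.

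There is no genuine obstacle here, since all the real work has been done in the previous section (minimal/maximal characterization, shellability, the sphere property) and in the cited theorem of Adiprasito. The only point that deserves a sentence of care in the write-up is the translation between the face numbers of the primal poset $\L_s(F)$ and those of its dual simplicial complex $\partial(\L_s^\Delta(F))$, so that the reader is convinced the ``$h$-vector'' in the statement really is the $h$-vector for which Dehn--Sommerville and the $g$-theorem are formulated. With that bookkeeping, the corollary follows by a direct citation.
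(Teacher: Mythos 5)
Your proposal matches the paper's (implicit) argument exactly: the paper derives item (1) from the palindromic interpretation of the $h$-vector in Corollary \ref{3:cor:palind}, and items (2) and (3) from Corollary \ref{3.cor:sph} together with Adiprasito's resolution of the $g$-conjecture for simplicial spheres, after noting (via Lemma \ref{3:lem:sc} and the $f$/$h$-vector dictionary) that $\partial(\L_s^\Delta(F))$ is a genuine pure simplicial complex in the generic case. Your extra sentence of care about the primal/dual face-number translation is exactly the bookkeeping the paper also carries out just before the corollary.
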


Since we have situations where $\L^\Delta_s(F)$ is isomorphic to non-simplicial polytopes where the g-theorem does not hold, we cannot extend the theorem in its entirety to the general setting. See for instance Example \ref{1:ex:pyramid}, where the h-vector is not palindromic. However, the third condition in Corollary \ref{3:cor:g-thm} can be used to deduce the Upper Bound Theorem for polytopes (see Section 3 in \cite{mcmullen1971numbers}) and this is a bound that we can extend to the general case.

To extend the generic bound, we show that the component-wise maximal $f$-vector of hyperbolic posets is attained in some generic case. In the following we identify $\L_s(F)$ with the poset of compositions that occur in $\H_s(F)$.

\begin{proposition}\label{3:prop:subdiv}
    Suppose $F$ has no repeated roots and $n-s>0$, then there is a $\delta>0$ such that for all $\epsilon$ with $0<\epsilon<\delta$, 
    \begin{enumerate}
        \item $\H_s(F+\epsilon T^{n-s})$ is generic,
        \item $\lambda\in \L_s(F+\epsilon T^{n-s}) \implies \lambda\geq \mu \text{ for some }\mu\in \L_s(F)$,
        \item $\mu\in\L_s(F) \ \& \ \ell(\mu)\geq s \implies \mu \in\L_s(F+\epsilon T^{n-s})$ and
        \item for any $\mu\in \L_s(F)$ with $\ell(\mu)<s$, there is a $\lambda\in \L_s(F+\epsilon T^{n-s})$ of length $s$ such that $\lambda\geq \mu$ and $\lambda$ is incomparable with all other compositions of length at most $s$ in $\L_s(F)$.
    \end{enumerate}
\end{proposition}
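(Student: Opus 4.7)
My plan is to prove the four items by exploiting that the perturbation $\epsilon T^{n-s}$ only shifts the $s$-th coefficient, so the ambient slice $\H_{s-1}(F)=\H_{s-1}(F+\epsilon T^{n-s})$ and all its strata $\H_{s-1}^\nu(F)$ are shared between the original and perturbed setting. The slices $\H_s(F)$ and $\H_s(F+\epsilon T^{n-s})$ are then parallel affine level sets inside this common ambient slice, obtained by fixing $H_s=F_s$ respectively $H_s=F_s+\epsilon$, and most of the work reduces to a careful perturbation analysis within $\H_{s-1}(F)$.

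For item (1) I would show that for each composition $\mu$ of $n$ with $\ell(\mu)=l<s$, the polynomial map $\phi_\mu:\R^l\to\R^s$ sending $(x_1,\ldots,x_l)$ to the first $s$ signed elementary symmetric polynomials of the weighted tuple has the Jacobian of its first $s-1$ components of maximal rank $l$ at every point with distinct $x_i$'s; this is a weighted Vandermonde calculation. Consequently, the preimage under $\phi_\mu$ of the affine line $L=\{(F_1,\ldots,F_{s-1},F_s+c):c\in\R\}$ is a zero-dimensional, hence finite, semi-algebraic set. Taking the union of the $\phi_\mu$-images of these preimages over the finitely many $\mu$ with $\ell(\mu)<s$ yields a finite set of bad values of $c$, and $\delta$ can be chosen smaller than the smallest positive one.

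Items (2) and (3) I would handle together through Lemma \ref{2:lem:projhom} and Proposition \ref{2:prop:pointormaxd}. For (2), given $\lambda\in\L_s(F+\epsilon T^{n-s})$ realized by some $G$ with $c(G)=\lambda$, $G$ lies in the relative interior of the ambient stratum $\H_{s-1}^\lambda(F)$. Lemma \ref{2:lem:projhom} identifies this stratum with a closed semi-algebraic set $K\subseteq\R^{l-s+1}$ via projection to $(H_s,\ldots,H_l)$, and by Proposition \ref{2:prop:pointormaxd} the image of $G$ lies in the topological interior of $K$. Decreasing the first coordinate from $F_s+\epsilon$ to $F_s$ stays inside $K$ for small $\epsilon$, producing $G_0\in\H_s(F)$ with $\mu:=c(G_0)\leq\lambda$, so $\mu\in\L_s(F)$ witnesses the inclusion. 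Item (3) is the mirror statement: for $\mu\in\L_s(F)$ with $\ell(\mu)\geq s$, a polynomial $H$ with $c(H)=\mu$ sits in the interior of the analogous projection set for $\H_{s-1}^\mu(F)$, and increasing the first coordinate by $\epsilon$ stays inside and yields $H'\in\H_s^\mu(F+\epsilon T^{n-s})$ with $c(H')=\mu$.

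Item (4) is where I expect the main difficulty. For $\mu\in\L_s(F)$ with $\ell(\mu)<s$, the stratum $\H_s^\mu(F)=\{H\}$ is a single polynomial. For any refinement $\nu$ of $\mu$ with $\ell(\nu)=s$, the ambient stratum $\H_{s-1}^\nu(F)$ is a compact one-dimensional set with $H$ on its boundary, and Theorem \ref{2:thm:minmax} tells us that $H$ is its unique minimum or maximum with respect to $H_s$ according to whether $\mu/\nu$ is coarser than or equal to an alternate odd or alternate even composition. I would take $\lambda$ to be a refinement of $\mu$ of length $s$ with $\mu/\lambda$ exactly alternate odd, so that $H$ is the unique minimum of $\H_{s-1}^\lambda(F)$; then for small $\epsilon>0$ the perturbed level set enters the relative interior of this one-dimensional stratum, giving $\lambda\in\L_s(F+\epsilon T^{n-s})$. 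The hard part will be the incomparability with all other $\mu'\in\L_s(F)$ of length at most $s$: for $\ell(\mu')=s$ it is automatic since distinct compositions of equal length are incomparable, but for $\ell(\mu')<s$ one must argue that the corresponding zero-dimensional stratum $\{H'\}$ cannot serve as an extremal polynomial of $\H_{s-1}^\lambda(F)$. I expect this to follow by combining the uniqueness of minimum and maximum in Theorem \ref{2:thm:minmax} with the explicit alternate-odd structure of $\mu/\lambda$, which forces $H$ to be the minimum and rules out any $H'$ from $\H_s(F)$ being the alternate-even maximum at the opposite endpoint.
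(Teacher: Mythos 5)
Your overall strategy — work inside the common ambient slice $\H_{s-1}(F)=\H_{s-1}(F+\epsilon T^{n-s})$ and perturb the $s$-th coefficient — is exactly the paper's. Item (1) is fine; your Jacobian/Vandermonde calculation is a more hands-on version of the paper's observation that the polynomials in $\H_{s-1}(F)$ with at most $s-1$ distinct roots are precisely its zero-dimensional strata and are therefore finite in number, so their finitely many $s$-th coefficient values can be avoided. Item (3) also goes through as you describe, because the witness $H$ with $c(H)=\mu$ and the interior radius around $P^{n-l}(H)$ are fixed once $\mu$ is fixed and there are only finitely many $\mu$.

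The gap is in item (2). You claim that decreasing the first coordinate of $P^{n-l}(G)$ from $F_s+\epsilon$ to $F_s$ "stays inside $K$ for small $\epsilon$." This is not justified: $K=P^{n-l}(\H_{s-1}^\lambda(F))$ is generally not convex, so the straight segment may leave $K$; and, more seriously, both $\lambda$ and $G$ depend on $\epsilon$, so there is no fixed ball around $P^{n-l}(G)$ to give a uniform bound. What you actually need is the paper's argument via the extremal polynomial of $\H_{s-1}^\lambda(F)$: by Theorem \ref{2:thm:minmax} the minimum of the $s$-th coefficient on the compact connected set $\H_{s-1}^\lambda(F)$ is attained at a polynomial with at most $s-1$ distinct roots, so by the choice of $\delta$ from item (1) that minimum cannot lie in $(F_s,F_s+\delta)$; since it is $\leq F_s+\epsilon<F_s+\delta$, it is $\leq F_s$, and then connectedness gives a polynomial in $\H_{s-1}^\lambda(F)$ with $s$-th coefficient exactly $F_s$, i.e.\ $\lambda\in\L_s(F)$, or that minimal polynomial itself lies in $\H_s(F)$ with composition $\leq\lambda$.

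Item (4) has a second gap. You assert that for \emph{any} refinement $\nu$ of $\mu$ of length $s$, the stratum $\H_{s-1}^\nu(F)$ is one-dimensional with $H$ on its boundary; this is false in general — for an arbitrary such $\nu$ the stratum may degenerate to the single point $\{H\}$. So choosing $\lambda$ purely combinatorially (by asking $\mu/\lambda$ to be alternate odd) does not guarantee that $\H_{s-1}^\lambda(F)$ actually contains anything besides $H$. The paper instead obtains the one-dimensional stratum from Lemma \ref{2:lem:maxiffmax}: since $H$ is neither the minimal nor the maximal polynomial of $\H_{s-1}(F)$ (because $F$, which has $n$ distinct roots, lies in the interior of $\H_{s-1}(F)$ with the same $s$-th coefficient as $H$, while the extremal polynomials are unique by Theorem \ref{2:thm:minmax}), the contrapositive of Lemma \ref{2:lem:maxiffmax} produces a one-dimensional stratum $\H_{s-1}^\lambda(F)$ for which $H$ \emph{is} the minimal polynomial. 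With that in hand, the incomparability you worry about is immediate and does not require reasoning about the alternate-even maximum at the other endpoint: by Lemma \ref{2:lem:projhom}, the $s$-th coefficient is a homeomorphism from $\H_{s-1}^\lambda(F)$ onto a closed interval, so $\H_s^\lambda(F)$ consists of at most one polynomial, namely $H$; hence $\mu=c(H)$ is the only composition of length $\leq s$ in $\L_s(F)$ that is $\leq\lambda$.
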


\begin{proof}
    By Proposition \ref{2:prop:pointormaxd}, $\H_s(F)$ is of dimension $n-s>0$ and $\H_{s-1}(F)$ is of dimension $n-s+1$. Since $F$ is in the interior of $\H_{s-1}(F)$, we can choose a $\delta>0$ such that $B_\delta(F)\subset \H_s(F)$. Since there are finitely many polynomials in $\H_{s-1}(F)$ with at most $s-1$ distinct roots we can choose a $\delta$ such that for all $\epsilon$ with $0<\epsilon<\delta$, $\H_s(F+\epsilon T^{n-s})$ contains only polynomials with at least $s$ distinct roots.

    For the second statement, let $\lambda\in \L_s(F+\epsilon T^{n-s})$ and let $H$ be the minimal polynomial of $\H_{s-1}^\lambda(F)$. By Theorem \ref{2:thm:minmax}, $H$ has at most $s-1$ distinct roots. Thus we either have $H\in\H_s(F)$ and $c(H)\leq \lambda$ or $H\not\in\H_s(F)$ and $\lambda\in\L_s(F)$.

    For the third statement, let $Q$ be a polynomial in $\H_s(F)$ with at least $s$ distinct roots and composition $\mu$. By Proposition \ref{2:prop:pointormaxd}, $\H_{s-1}^\mu(F)$ is of dimension $\ell(\mu)-s+1>0$. By Theorem \ref{2:thm:minmax}, $\H_{s-1}^\mu(F)$ has a maximal polynomial, $G$, with at most $s-1$ distinct roots. Thus the $((s-1)+1)^{th}$ coefficient of $G$ is at least as large as the $((s-1)+1)^{th}$ coefficient of $F$ plus $\delta$. Since $\H_{s-1}^\mu(F)$ is contractible the intersection of $\H_{s-1}^\mu(F)$ and $\H_s(F+\epsilon T^{n-s})$ is nonempty. So $\H_s^\mu(F+\epsilon T^{n-s})$ is nonempty and contains no polynomial with strictly less than $s$ distinct roots. Thus, by Proposition \ref{2:prop:pointormaxd}, $\H_s^\mu(F+\epsilon T^{n-s})$ contains a polynomial with composition $\mu$.

    For the last statement, let $P$ be a polynomial with at most $s-1$ distinct roots and composition $\mu$. Since $P$ is neither the minimal nor the maximal polynomial of $\H_{s-1}(F)$, then $s-1>1$ by the  theorem in \cite{meguerditchian1992theorem} and so by Lemma \ref{2:lem:maxiffmax}, there is a one-dimensional stratum $\H_{s-1}^\lambda(F)$ for which $P$ is the minimal polynomial. Similar to the argument above, $\H_s^\lambda(F+\epsilon T^{n-s})$ must therefore contain a polynomial with composition $\lambda$. Also, by Proposition \ref{2:prop:pointormaxd}, $\ell(\lambda)=s$ since $\H_s^\lambda(F+\epsilon T^{n-s})$ is generic and zero-dimensional. Lastly, by Theorem \ref{2:thm:minmax}, $P$ is the unique minimal polynomial of $\H_{s-1}^\lambda(F)$, thus $c(P)$ is the only composition in $\L_s(F)$ that is smaller than or equal to $\lambda$.
\end{proof}

\begin{remark}\label{3:rem:subd}
    We see in Proposition \ref{3:prop:subdiv} that a non-generic $\H_s(F)$ can be obtained from some generic $\H_s(H)$ by "contracting" some of the strata of $\H_s(H)$ to points. This corresponds to merging some of the faces of $\partial(\L_s^\Delta(H))$. In other words if $\partial(\L_s^\Delta(F))$ is a polytopal complex, then the simplicial complex $\partial(\L_s^\Delta(H))$ is a simplicial subdivision of $\partial(\L_s^\Delta(F))$. Thus whenever $\partial(\L_s^\Delta(F))$ is a polytopal complex it is also a combinatorial sphere. However, we do not know if $\partial(\L_s^\Delta(F))$ is a polytopal complex in general and thus we have restricted ourselves to the generic case.
\end{remark}

Due to the preceding remark, we have the following weaker conjecture than Conjecture \ref{2:conj:polyt}.

\begin{conjecture}\label{3:conj:comb.sphere}
The boundary complex $\partial(\L^\Delta_s(F))$ is a polytope complex and thus by Remark \ref{3:rem:subd}, a combinatorial sphere. 
\end{conjecture}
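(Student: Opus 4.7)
The strategy is to deduce the non-generic case from the generic one proved in Corollary \ref{3.cor:sph} by controlling what happens under the perturbation of Proposition \ref{3:prop:subdiv}. Set $\tilde F := F + \epsilon T^{n-s}$ for $\epsilon > 0$ small, so that $\H_s(\tilde F)$ is generic and $\partial(\L_s^\Delta(\tilde F))$ is a shellable combinatorial $(n-s-1)$-sphere. Proposition \ref{3:prop:subdiv} provides a surjection of posets $\L_s(\tilde F) \twoheadrightarrow \L_s(F)$ that is the identity on compositions of length at least $s$ and that collapses, for each degenerate composition $\mu \in \L_s(F)$ with $\ell(\mu) < s$, a nontrivial fiber of length-$s$ refinements of $\mu$ down to $\mu$.

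Dually, each degenerate $\mu$ corresponds to a pure subcomplex $C_\mu \subseteq \partial(\L_s^\Delta(\tilde F))$ of dimension $n-s-1$ spanned by the facets whose labels refine $\mu$; these subcomplexes are pairwise disjoint by the incomparability statement in item (4) of Proposition \ref{3:prop:subdiv}, and $\partial(\L_s^\Delta(F))$ is isomorphic as a poset to the quotient of $\partial(\L_s^\Delta(\tilde F))$ obtained by contracting each $C_\mu$ to a single facet. I would then argue by induction on $n - s$: the base case $n - s = 1$ is trivial, since $\partial(\L_s^\Delta(F))$ is then just two points. For the inductive step, the task splits into two parts: (i) show that each $C_\mu$ is itself a simplicial subdivision of a convex polytope $Q_\mu$ of dimension $n - s - 1$, and (ii) realize $\partial(\L_s^\Delta(F))$ geometrically by starting from any geometric realization of $\partial(\L_s^\Delta(\tilde F))$ as a polytope complex and replacing each subdivided piece $C_\mu$ by the single polytope $Q_\mu$, matching the quotient poset.

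The main obstacle is step (i). The intuition is that perturbing $H_\mu$ in the direction $T^{n-s}$ splits the distinct roots of the unique polynomial $H_\mu \in \H_s^\mu(F)$ essentially independently, so the combinatorics of $C_\mu$ should factor as a Cartesian product of smaller combinatorial structures, one per distinct root of $H_\mu$. Each of these smaller pieces should in turn be identified with the polytopal dual boundary complex of a hyperbolic slice attached to a single cluster of coincident roots of $H_\mu$, which has strictly smaller dimension; by the inductive hypothesis each factor is therefore polytopal, and their Cartesian product provides $Q_\mu$. The technical heart is a local analysis of the Vieta map of Remark \ref{2:rem:vieta}: a neighborhood of $H_\mu$ in $\V_s(F) \cap \W_n$ should diffeomorphically decompose, up to the symmetric-group stabilizer of $H_\mu$, as a product of neighborhoods of its distinct roots, with the constraint of fixing the first $s$ elementary symmetric polynomials translating into a linear coupling that combinatorially matches the claimed Cartesian-product description. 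Step (ii) should then follow by convex-geometric gluing, using Theorem \ref{2:thm:minmax} to ensure that neighboring polytopes $Q_\mu$ and $Q_{\mu'}$ meet along faces consistently whenever their corresponding degenerate vertices lie in a common higher-dimensional stratum.
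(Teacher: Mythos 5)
This statement is labeled \emph{Conjecture} \ref{3:conj:comb.sphere} in the paper, and the paper offers no proof of it: Remark \ref{3:rem:subd} explicitly says ``we do not know if $\partial(\L_s^\Delta(F))$ is a polytopal complex in general.'' So there is no paper proof to compare against. What the paper \emph{does} establish (Proposition \ref{3:prop:subdiv} and Remark \ref{3:rem:subd}) is the conditional direction: \emph{if} $\partial(\L_s^\Delta(F))$ is a polytope complex, then the generic $\partial(\L_s^\Delta(\tilde F))$ is a simplicial subdivision of it, and hence it is a combinatorial sphere. Your strategy is to run this in reverse — start from the generic sphere and ``un-subdivide'' it — which is the natural thing to try and does match the heuristic picture in Remark \ref{3:rem:subd}. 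But it is a strategy, not a proof, and the gaps are exactly where the difficulty of the open problem lives.

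Concretely: (a) You claim the subcomplexes $C_\mu$ (facets of $\partial(\L_s^\Delta(\tilde F))$ refining a degenerate $\mu$) are pairwise disjoint, citing Proposition \ref{3:prop:subdiv}(4). That item only guarantees the \emph{existence}, for each degenerate $\mu$, of \emph{one} length-$s$ refinement $\lambda$ incomparable with the other short compositions; it says nothing about the rest of the refinements of $\mu$, so a facet could in principle lie above two different degenerate $\mu$'s. Relatedly, the poset surjection $\L_s(\tilde F)\twoheadrightarrow\L_s(F)$ you invoke is not given by items (2)--(4); (2) only says each $\lambda$ lies above \emph{some} $\mu\in\L_s(F)$, which is not a well-defined map. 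The correct fiber over $\mu$ should be defined by the limit polynomial as $\epsilon\to 0$, and one would have to prove this coincides with your combinatorial description. (b) Step (i) — that each $C_\mu$ is a simplicial subdivision of a convex polytope via a Cartesian-product decomposition governed by the local structure of the Vieta map near $H_\mu$ — is the technical heart, as you say, and you leave it at the level of intuition. (c) Step (ii), the convex-geometric gluing to replace subdivided pieces by single polytopes compatibly, is also not carried out, and polytope-complex axioms (faces meet in faces) are exactly the kind of thing that fails if one glues carelessly. As written, the proposal identifies a reasonable line of attack but does not close any of the three gaps, and the first one is not merely unfinished — the disjointness claim is unsupported by the cited result.
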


\newpage
To state the bound for the general case we need another definition.

\begin{definition}
    We define
    \[\abb{\phi_d}{\R}{\R^d}{x}{(x,x^2,\dots,x^d)}\]
    to be the $d$-th \emph{moment curve}. If $x_1,\dots,x_m\in \R$ are distinct, we say that the convex hull of $\phi_d(x_1),\dots,\phi_d(x_m)$ is the $d$-dimensional \textbf{cyclic polytope} on $m$ vertices.
\end{definition}

\begin{corollary}[Upper Bound Theorem]\label{3:cor:ubt}
    Let $(f_0,\dots,f_{n-s})$ be the $f$-vector of $\L_s(F)$. If $c_{i}$ is the number of $i$-dimensional faces of the $(n-s)$-dimensional cyclic polytope with $f_{n-s-1}$ vertices then 
    $$f_{n-s-i}\leq c_{i-1} \ \forall \ i\in [n-s].$$
\end{corollary}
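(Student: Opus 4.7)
The plan is to reduce to the generic case, where the claim follows from Corollary \ref{3.cor:sph} combined with Stanley's Upper Bound Theorem for simplicial spheres (the consequence of the $g$-theorem of Corollary \ref{3:cor:g-thm}). Namely, if $\H_s(F)$ is generic, then $\partial(\L^\Delta_s(F))$ is a simplicial $(n-s-1)$-sphere whose $f$-vector $(\tilde f_0,\ldots,\tilde f_{n-s-1})$ satisfies $\tilde f_i = f_{n-s-1-i}$, so $\tilde f_0 = f_{n-s-1}$ is the vertex count and Stanley's theorem immediately gives $\tilde f_{i-1}\leq c_{i-1}$ for every $i\in[n-s]$, which is the desired bound.

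For a non-generic $F$, I would use Proposition \ref{3:prop:subdiv} to perturb $F$ to $\tilde F := F + \varepsilon T^{n-s}$ for sufficiently small $\varepsilon>0$, obtaining a generic slice $\H_s(\tilde F)$. Two facts then suffice: (a) $f_j(\L_s(F))\leq f_j(\L_s(\tilde F))$ for all $j\in\{0,\ldots,n-s\}$, and (b) $f_{n-s-1}(\L_s(F)) = f_{n-s-1}(\L_s(\tilde F))$. Granted these, the generic UBT applied to $\tilde F$ yields
\[
f_{n-s-i}(\L_s(F))\leq f_{n-s-i}(\L_s(\tilde F))\leq c_{i-1},
\]
with $c_{i-1}$ being the $(i-1)$-face count of the cyclic polytope on $f_{n-s-1}(\L_s(F))$ vertices, exactly as required.

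Fact (a) I would establish by assembling items 3 and 4 of Proposition \ref{3:prop:subdiv} into a disjoint union of injections: item 3 directly embeds each length-$\geq s$ composition of $\L_s(F)$ into $\L_s(\tilde F)$ (these account for every $j$-dimensional stratum with $j\geq 1$ as well as the length-$s$ contribution to $f_0$), and item 4 sends each length-$<s$ composition of $\L_s(F)$ to a length-$s$ composition in $\L_s(\tilde F)$ that is incomparable with every other length-$\leq s$ composition of $\L_s(F)$. The incomparability clause forces distinct images for distinct inputs as well as disjointness from the image of item 3, so the combined injection preserves the dimension of the corresponding stratum.

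Fact (b) is the delicate step. The codim-$1$ strata correspond to the $n-1$ length-$(n-1)$ compositions $(1^{k-1},2,1^{n-k-1})$, which via the Vieta homeomorphism of Remark \ref{2:rem:vieta} are indexed by the $n-1$ walls $\{x_k=x_{k+1}\}$ of $\W_n$ intersected with $\V_s(F)$. I would argue that each such intersection is a nonempty codim-$1$ piece of the relative boundary of $\V_s(F)\cap\W_n$ whenever $\dim\H_s(F)=n-s\geq 1$, using a local dimension count near a polynomial of composition $(1,\ldots,1)$ (supplied by Proposition \ref{2:prop:pointormaxd}) together with the fact that the $(n-s-1)$-dimensional relative boundary of a full-dimensional $\V_s(F)\cap\W_n$ decomposes along these $n-1$ walls. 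Since the same argument applies to $\tilde F$, both sides of (b) equal $n-1$. The main obstacle is precisely this wall-by-wall nonemptiness claim in the possibly non-generic setting; once it is in hand, the reduction to the simplicial-sphere UBT is routine.
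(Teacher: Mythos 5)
Your reduction strategy is the same as the paper's: perturb to a generic slice via Proposition \ref{3:prop:subdiv}, then apply Stanley's UBT for simplicial spheres through Corollary \ref{3.cor:sph}. You are also right — and this is a genuine observation that the paper's own one-line reduction glosses over — that a component-wise $f$-vector inequality alone is \emph{not} enough: since the cyclic-polytope bound $c_{i-1}$ is an increasing function of the number of vertices, passing from $F$ to the generic $\tilde F$ only yields $f_{n-s-i}(F)\leq c_{i-1}$ for the cyclic polytope on $f_{n-s-1}(\tilde F)$ vertices, which is the claimed bound exactly when $f_{n-s-1}(F)=f_{n-s-1}(\tilde F)$. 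So isolating your fact (b) is correct.

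However, your proposed justification of fact (b) is wrong. You claim that both $f_{n-s-1}(\L_s(F))$ and $f_{n-s-1}(\L_s(\tilde F))$ equal $n-1$ because each of the $n-1$ walls $\{x_k=x_{k+1}\}$ meets $\V_s(F)\cap\W_n$ in a codimension-one piece. This fails already in the \emph{generic} case, not just the non-generic one you flag as the obstacle. Take $n=4$, $s=3$, and $F$ a monic quartic with four distinct roots. Then $\H_3(F)=\{F+eT^0:\text{hyperbolic}\}$ is a compact interval with exactly two endpoints; one endpoint always has composition $(1,2,1)$ (the local maximum of the depressed quartic becomes a double root), while the other has composition $(2,1,1)$ \emph{or} $(1,1,2)$ (whichever local minimum is higher), but never both. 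So only two of the three length-$(n-1)$ compositions occur, and $f_{n-s-1}=f_0=2<3=n-1$. In general, $\partial(\L_s^\Delta(F))$ is a simplicial $(n-s-1)$-sphere and thus needs at least $n-s+1$ vertices, but nothing forces it to have $n-1$; when $s\geq 3$ we have $n-s+1<n-1$, and your "all walls are hit" claim can and does fail. What is actually needed for (b) is that a length-$(n-1)$ composition gives an $(n-s-1)$-dimensional stratum for $\tilde F$ if and only if it does so for $F$ — a statement about the stability of the set of occurring ridge compositions under the perturbation of Proposition \ref{3:prop:subdiv}, not about which walls are nonempty. Your fact (a) argument via items 3 and 4 of that proposition is fine; the gap is in (b), and it is a real one that the paper's proof also leaves unaddressed.
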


\begin{proof}
    By Proposition \ref{2:prop:pointormaxd}, we may assume $H_s(F)$ is $(n-s)$-dimensional where $n-s>0$ and we may assume $F$ has no repeated roots. Then, by Proposition \ref{3:prop:subdiv}, there is an $\epsilon>0$ such that $\H_s(F+\epsilon T^{d-s})$ is generic and whose f-vector is component-wise an upper bound on the f-vector of $\H_s(F)$. Thus we can reduce to the case when $\H_s(F)$ is generic.
    
    When $\H_s(F)$ is generic we know that the h-vector of $\partial(\L_s^\Delta(F))$ is palindromic. From this, it can be shown that the upper bound on the f-vector is obtained by establishing the following upper bound on the h-vector (see chapter 8.4 in \cite{ziegler2012lectures}):
    $$h_i\leq \binom{f_{n-s-1}-n+s-1+i}{i}.$$

    The claim now follows directly from the Upper Bound Theorem for simplicial spheres (Cor. 5.3 in \cite{stanley1975upper}) since $\partial(\L_s^\Delta(F))$ is a combinatorial sphere for generic $\H_s(F)$ by Corollary \ref{3.cor:sph}.
    \end{proof}

\begin{remark}\label{3:rem:bound}
    In \cite{riener2012degree} (Theorem 4.2) it was shown that the extremal points of the convex hull of $\H_s(F)$ are contained in the subset of polynomials of $\H_s(F)$ with at most $s$ distinct roots. And since 
    Corollary \ref{3:cor:ubt} together with Exercise 0.9 in \cite{ziegler2012lectures} gives us an explicit upper bound on the number of polynomials in $\H_s(F)$ with at most $s$ distinct roots, it also gives us an upper bound on the number of local extremal points. This improves the bound given in Theorem 2.14 and Remark 2.15 in \cite{riener2024linear} to the following
    \begin{align*}
    f_0&\leq \begin{cases}
    \binom{n-1-(n-s)/2}{(n-s)/2}+\binom{n-2-(n-s)/2}{(n-s)/2-1}, & \text{if $n-s$ is even}\\
    2\binom{n-2-(n-s-1)/2}{(n-s-1)/2}, & \text{if $n-s$ is odd}
    \end{cases}\\
    &= \begin{cases}
    \binom{(n+s)/2-1}{s-1}+\binom{(n+s)/2-2}{s-1}, & \hspace{8mm} \text{if $n-s$ is even}\\
    2\binom{(n+s-3)/2}{s-1}, & \hspace{8mm} \text{if $n-s$ is odd}
    \end{cases}.
    \end{align*}
\end{remark}

We have computationally verified that the bound in Remark \ref{3:rem:bound} can be attained when $n\leq 8$ and $s\leq n$ and one can also use Proposition \ref{2:prop:pointormaxd} to argue that the bound is attained when $s\leq 2$ and when $s\geq n-1$. Therefore we have the following conjecture:

\begin{conjecture}\label{3:conj:bound}
The bound stated in Remark \ref{3:rem:bound} is sharp.
\end{conjecture}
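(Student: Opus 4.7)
The plan is to reduce to the generic case and then construct an explicit extremal $F$. By Corollary \ref{3:cor:ubt} and Proposition \ref{3:prop:subdiv}, the perturbation $F \mapsto F + \epsilon T^{n-s}$ does not decrease $f_0$ when passing from a non-generic to a generic slice, so it suffices to produce, for each pair $(n,s)$, a generic $F$ whose boundary complex $\partial(\L_s^\Delta(F))$ realizes the same $h$-vector as the $(n-s)$-dimensional cyclic polytope on the appropriate number of vertices. By Corollary \ref{3:cor:palind} this is equivalent to producing an $F$ whose length-$s$ compositions of $n$, which parametrize the zero-dimensional strata via the real solutions of $E_i^\mu(a_1,\dots,a_s) = (-1)^i F_i$ for $1 \leq i \leq s$, distribute in the shelling of Theorem \ref{3:thm:shell} so that the restriction-set sizes match those of a neighborly simplicial sphere.

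One concrete line of attack is a recursion in $n$ with $s$ fixed. A direct computation shows that the bound in Remark \ref{3:rem:bound} satisfies the clean identity
\[
f_0(n+1,s) - f_0(n,s) = \binom{\lfloor (n+s-1)/2\rfloor - 1}{s-2},
\]
and this Pascal-type increment suggests passing from an extremal $F$ of degree $n$ to an extremal $\tilde F$ of degree $n+1$ by the substitution $\tilde F = (T-r)F$ for a suitable real $r$. The map $H \mapsto (T-r)H$ injects $\H_s(F)$ into $\H_s(\tilde F)$ but increases composition lengths by one, so the "new" length-$s$ compositions of $n+1$ in $\H_s(\tilde F)$ should be detected separately; the plan is to identify them with length-$(s{-}1)$ compositions in a lower-dimensional slice, with one part incremented, and to verify that a judicious choice of $r$ activates exactly the predicted number of such compositions. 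Theorem \ref{2:thm:minmax} and the shelling structure of Theorem \ref{3:thm:shell} should provide the combinatorial bookkeeping needed to track how strata merge and split as $r$ varies.

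In parallel I would test direct constructions such as $F$ with roots in arithmetic progression or at Chebyshev-like nodes, since such symmetric configurations tend to maximize real-root counts in related extremal problems, and then appeal to Proposition \ref{3:prop:subdiv} to guarantee genericity. The main obstacle in either approach is the discontinuous dependence of real-solution counts on $F$: the number of real tuples $(a_1 < \dots < a_s)$ solving the Vandermonde system jumps as $F$ varies, so neither the recursion nor a direct construction can be justified by local deformation alone. Matching the combinatorics of the cyclic polytope to the semi-algebraic constraint of hyperbolicity appears to require a genuinely new real-algebraic input beyond the shelling machinery of Section 3, and settling Conjecture \ref{3:conj:comb.sphere} first may be a necessary preliminary step, since it would guarantee that the UBT bound is the only remaining obstruction to the cyclic polytope combinatorics.
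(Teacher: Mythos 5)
The target statement is a conjecture in the paper, not a theorem: the authors state only that they have computationally verified the bound is attained for $n \leq 8$ and that ad hoc arguments from Proposition~\ref{2:prop:pointormaxd} cover $s \leq 2$ and $s \geq n-1$. There is no general proof in the paper, so there is no ground-truth argument for your proposal to be measured against.

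Your write-up is accordingly not a proof but a research sketch, and you acknowledge this yourself in the final paragraph. The parts that can be checked are sound: the reduction to generic $F$ via Proposition~\ref{3:prop:subdiv} and Corollary~\ref{3:cor:ubt} is correct, and the increment identity
\[
f_0(n+1,s) - f_0(n,s) = \binom{\lfloor (n+s-1)/2\rfloor - 1}{s-2}
\]
checks out in both parities of $n-s$ using Pascal's rule. But the central ingredient is never supplied: you do not exhibit an extremal $F$, do not verify that the substitution $\tilde F = (T-r)F$ (or the root-multiplication map $H \mapsto (T-r)H$) activates exactly $\binom{\lfloor (n+s-1)/2\rfloor - 1}{s-2}$ new length-$s$ compositions, and do not establish the neighborliness of the resulting shelling. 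One point that should be corrected: Conjecture~\ref{3:conj:comb.sphere} is not a necessary preliminary step. Proposition~\ref{3:prop:subdiv} already reduces sharpness of the UBT to the generic case, where Corollary~\ref{3.cor:sph} supplies sphericity; what is missing is an existence result (a ``cyclic'' hyperbolic slice), not additional sphericity for non-generic slices. In short, the gap in your proposal is the construction itself, which neither you nor the paper produces.
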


\section{Improving Timofte's Degree principle}\label{sec:timofte}

Throughout the section, we denote by 
$\C(n,s)$ and by $\P(n,s)$ the set of all compositions and partitions, respectively, of $n$ into $s$ parts and by $\C_{\min}(n,s)$ and $\P_{\min}(n,s)$ the compositions and partitions that correspond to a minimal polynomial in some generic hyperbolic slice. 

Timofte showed in \cite{timofte2003positivity} the so-called "degree principle": Symmetric polynomials of degree at most $s$ have a common real root if and only if they have a common real root with at most $s$ distinct coordinates. We want to improve this result by considering subsets of the set of points with at most $s$ distinct coordinates. To this end, we introduce some notation:

\begin{definition}
Let $P\subseteq \P(n,s)$. We say that $P$ is a \emph{$(n,s)$-Vandermonde covering}, if for every hyperbolic slice $\H_s(F)$ there is a partition $q\in P$ and a polynomial $G\in \H_s(f)$ with corresponding partition $p(G)$ such that $q\geq p(G)$.
\end{definition}

Since we are interested in symmetric polynomials, the roots of the polynomials are closed under permutations. So we identify the orbit types of points in $R^n$ by partitions. Instead of considering all points with at most $s$ distinct coordinates in the degree principle, we want to consider only points with orbit types corresponding to a partition in a Vandermonde covering.

\begin{definition}
Let $P\subseteq \P(n,s)$. We denote by
\[A_P:=\left\{ (\underbrace{x_1,\dots,x_1}_{q_1-\text{times}},\underbrace{x_2,\dots,x_2}_{q_2-\text{times}},\dots,\underbrace{x_s,\dots,x_s}_{q_s-\text{times}})\in\R^n ~\middle|~ q\in P \right\} \]
the set of points with coordinate multiplicities corresponding to a partition in $P$.
\end{definition}

The following theorem motivates the name "Vandermonde covering" and can also be seen as a strenghtening of the degree principle presented in \cite{riener2012degree}.

\begin{theorem}\label{4:theorem:char_timofte_covering}
Let $P\subseteq \P(n,s)$. The following are equivalent:
\begin{enumerate}
    \item $P\subseteq \P(n,s)$ is a $(n,s)$-Vandermonde covering.
    \item For all $k\in \N$ and all symmetric polynomials $F_1,\dots,F_k\in \R[\X]$ of degree at most $s$
    \[V_\R(F_1,\dots,F_k)\neq \emptyset \Leftrightarrow V_\R(F_1,\dots,F_k)\cap A_P\neq\emptyset. \]
    \item For all $a\in \R^s$, the Vandermonde variety
    \[\V(a)\neq \emptyset \Leftrightarrow \V(a)\cap A_P\neq\emptyset. \]
\end{enumerate}
\end{theorem}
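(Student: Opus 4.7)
The plan is to prove the three-way equivalence by establishing $(1)\Leftrightarrow(3)$ through the Vieta map correspondence of Remark \ref{2:rem:vieta} and $(2)\Leftrightarrow(3)$ through the Fundamental Theorem of Symmetric Polynomials (Theorem \ref{4:lem:cordian}). Condition $(3)$ is the natural hub here since Vandermonde varieties are, on the one hand, the Vieta image of hyperbolic slices and, on the other hand, the real vanishing loci of specific symmetric polynomials of degree at most $s$.

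For $(1)\Leftrightarrow(3)$, I would first establish the following combinatorial bridge: $\V(a)\cap A_P\neq\emptyset$ if and only if the corresponding hyperbolic slice contains a polynomial $G$ with $p(G)\leq q$ for some $q\in P$. For the forward direction, take $x\in\V(a)\cap A_P$; by grouping those parameters in the $A_P$-parametrization of $x$ that take equal values, the multiplicities of the distinct roots of $G_x:=\prod_i(T-x_i)$ appear as sums of parts of $q$, so $p(G_x)\leq q$ in the partition refinement order. For the converse, take $G$ with $p(G)=(m_1,\ldots,m_l)\leq q=(q_1,\ldots,q_s)$, choose a set-partition $\{B_1,\ldots,B_l\}$ of $[s]$ with $\sum_{j\in B_i}q_j=m_i$, assign the distinct roots $b_i$ of $G$ to block $B_i$, and form the tuple prescribed by $A_P$; this yields a permutation of the root-tuple of $G$, which still lies in $\V(a)$ by $\Sym(n)$-invariance. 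With this bridge in hand, $(1)$ and $(3)$ become literal restatements of each other (applied to $F=F_a$).

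For $(2)\Leftrightarrow(3)$, one direction is immediate because each $\V(a)$ is itself the real vanishing locus $V_\R((-1)^1 E_1-a_1,\ldots,(-1)^s E_s-a_s)$ of symmetric polynomials of degree at most $s$. For $(3)\Rightarrow(2)$, I would write each $F_j=G_j(E_1,\ldots,E_s)$ using Theorem \ref{4:lem:cordian}. Then $V_\R(F_1,\ldots,F_k)$ is the union of fibers of the map $x\mapsto(E_1(x),\ldots,E_s(x))$ over points of $V_\R(G_1,\ldots,G_k)\subset\R^s$, and each fiber is (up to signs) a Vandermonde variety. If $V_\R(F_1,\ldots,F_k)$ is nonempty then some fiber is nonempty, so $(3)$ applied to that fiber produces a point of $V_\R(F_1,\ldots,F_k)\cap A_P$. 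The reverse implication is trivial.

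The main obstacle is identifying what $A_P$ actually is combinatorially. Since the parameters $x_1,\ldots,x_s$ in its definition are allowed to coincide, $A_P$ contains not only tuples whose coordinate-partition equals some $q\in P$ but also tuples whose coordinate-partition is strictly coarser. Matching this against the covering condition $p(G)\leq q$ requires the set-partition argument sketched above; once this is correctly recognised, every remaining step is a routine translation between roots, coefficients, and the partition poset.
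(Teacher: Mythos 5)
Your proposal is correct, and it reorganizes the proof in a way worth noting. The paper proves the cycle $(1)\Rightarrow(2)\Rightarrow(3)\Rightarrow(1)$, whereas you prove $(1)\Leftrightarrow(3)$ and $(2)\Leftrightarrow(3)$ with $(3)$ as the hub. Both routes rest on exactly the same two ingredients: Theorem \ref{4:lem:cordian} (to pass between symmetric polynomials of degree $\le s$ and polynomials in $E_1,\dots,E_s$) and the correspondence, via the Vieta map, between points of $A_P$ and hyperbolic polynomials $G$ in a slice $\H_s(F)$ with $p(G)\le q$ for some $q\in P$. The paper's $(1)\Rightarrow(2)$ step in effect fuses your $(1)\Rightarrow(3)$ and $(3)\Rightarrow(2)$ into one pass; its $(3)\Rightarrow(1)$ is your $(3)\Rightarrow(1)$. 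Where your write-up adds genuine value is in explicitly isolating the combinatorial bridge: the claim that a point $\tilde x\in\R^n$ lies in $A_P$ if and only if its coordinate-multiplicity partition is $\le q$ for some $q\in P$. The paper invokes this silently (it asserts that a polynomial $g$ with $p(g)\le q$ has a root tuple $\tilde x\in A_P$ without argument), and your set-partition argument, grouping blocks $B_1,\dots,B_l$ of $[s]$ with $\sum_{j\in B_i}q_j=m_i$ and using $\Sym(n)$-invariance of the Vandermonde variety to dispose of ordering, is exactly the missing justification. So the content is equivalent, but your hub-and-spoke decomposition makes the role of each ingredient more transparent, at the cost of a slightly longer proof.
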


\begin{proof}
(1)$\Rightarrow$(2): 
    Let $P\subseteq \P(n,s)$ be a $(n,s)$-Vandermonde covering and let $x\in V_\R(F_1,\dots,F_k)$. Consider
    \[ F:= T^n - E_1(x) T^{n-1} + \dots + (-1)^n E_n(x) \]
    with roots $x_1,\dots,x_n$.
    Then there is a partition $q\in P$ and a polynomial $g\in \H_s(F)$ with corresponding partition $p(g)\leq q$ and roots 
    \[\tilde x=(\tilde x_1,\dots,\tilde x_n)\in A_P,\]
    because $P$ is a $(n,s)$-Vandermonde covering. Since $F_1,\dots,F_k$ are polynomials of degree at most $s$, we can write
    \[F_1=G_1(E_1,\dots,E_s),\dots,F_k=G_k(E_1,\dots,E_s)\]
    for some $G_1,\dots,G_k\in \R[Y_1,\dots,Y_s]$ by Lemma \ref{4:lem:cordian}. Now
    \[0=F_i(x)=G_i(E_1(x),\dots,E_s(x))=G_i(E_1(\tilde x),\dots,E_s(\tilde x))=F_i(\tilde x)\]
    and therefore $\tilde x \in V_\R(F_1,\dots,F_k)$.
\\(2)$\Rightarrow$(3): This is clear, because $E_i-a_i$ is symmetric of degree $i$.
\\(3)$\Rightarrow$(1):
    Assume (3) holds. Let $F=T^n - c_1 T^{n-1} + \dots + (-1)^n c_n$ be a hyperbolic polynomial with roots $x=(x_1,\dots,x_n)\in \R^n$. Then the Vandermonde variety $\V(-c_1,\dots,(-1)^sc_s)$ contains $x$ by construction and is therefore nonempty. By (3) there is an $\tilde x\in \V(-c_1,\dots,(-1)^sc_s)\cap A_P$, i.e.
    \[E_1(\tilde x)=c_1,\dots, E_s(\tilde x)=c_s.\]
    Now
    \[G:= T^n - E_1(\tilde x) T^{n-1} +\dots+ (-1)^n E_n(\tilde x)\]
    is a polynomial in $\H_s(F)$ with corresponding partition $p(G)\leq q$ for some $q\in P$.
\end{proof}

In the light of Theorem \ref{4:theorem:char_timofte_covering}, the degree principle can be interpreted as thefact that $\P(n,s)$ is a Vandermonde covering which follows from for example Theorem \ref{2:thm:minmax}.

\subsection{General bounds on Vandermonde coverings}

Since every generic hyperbolic slice has a unique minimal polynomial with a corresponding alternate odd composition, we get the following Vandermonde covering:

\begin{theorem}\label{4:upper_bound}
The set $\P_{\min}(n,s)$ is a $(n,s)$-Vandermonde covering of size $\left|\P\left(n - \ceil*{\frac{s}{2}},\floor*{\frac{s}{2}}\right)\right|$.
\end{theorem}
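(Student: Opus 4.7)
The plan is to prove the two assertions separately: the cardinality formula via an explicit description of $\P_{\min}(n,s)$ and a bijective count, and the covering property by reducing arbitrary slices to the generic case through a perturbation argument.

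\textbf{Cardinality.} By Theorem \ref{2:thm:minmax} applied to a generic slice $\H_s(F) = \H_s^{(1,\ldots,1)}(F)$, its minimal polynomial has composition $\lambda$ with $\ell(\lambda) \le s$, and the generic-case remark following the theorem upgrades this to $\lambda$ being an alternate odd composition; genericity of the slice forces $\ell(\lambda) = s$. An alternate odd composition of length $s$ has entries $1$ in the $\ceil*{s/2}$ positions $s, s-2, s-4, \ldots$, while the remaining $\floor*{s/2}$ positions carry arbitrary positive integers summing to $n - \ceil*{s/2}$. Passing to partitions, $\P_{\min}(n,s)$ is thus the set of partitions of $n$ into $s$ parts containing at least $\ceil*{s/2}$ ones, and the map that deletes $\ceil*{s/2}$ such ones is a bijection onto $\P(n - \ceil*{s/2}, \floor*{s/2})$, giving the stated size.

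\textbf{Covering property.} Fix a hyperbolic slice $\H_s(F)$. If it is generic, Theorem \ref{2:thm:minmax} furnishes a minimal polynomial $H \in \H_s(F)$ with $p(H) \in \P_{\min}(n,s)$, so $(q,G):=(p(H),H)$ witnesses the covering. If it is non-generic, I approximate by generic slices: when $F$ has no repeated roots, Proposition \ref{3:prop:subdiv} supplies the approximation $F + \epsilon T^{n-s}$, and in general the generic-slice locus is open and dense in the slice-parameter space $\R^s$ (via Proposition \ref{3:prop:subdiv} after replacing $F$ by a nearby polynomial with no repeated roots, using Proposition \ref{2:prop:pointormaxd}). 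Pick a sequence of generic slices $\H^{(k)}$ whose defining parameters converge to those of $\H_s(F)$, and let $H^{(k)}$ be the minimal polynomial of $\H^{(k)}$; by the generic case $p(H^{(k)}) \in \P_{\min}(n,s)$. Since $\P_{\min}(n,s)$ is finite, pass to a subsequence along which $p(H^{(k)})$ equals a fixed $q$. Compactness of the slices (Lemma \ref{2:lem:compcontr}) combined with continuity of coefficients gives a further subsequence with $H^{(k)} \to G \in \H_s(F)$, and since roots may only merge in the limit, $c(G)$ is a coarsening of $c(H^{(k)})$, hence $p(G) \le q$, as required.

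The main technical hurdle is the compactness-plus-limiting step in the non-generic case: showing that the approximating sequence $H^{(k)}$ admits a subsequential limit in $\H_s(F)$ whose composition is a coarsening of $c(H^{(k)})$. This relies on the compactness of hyperbolic slices (Lemma \ref{2:lem:compcontr}) and on the standard continuity of the roots of a monic polynomial in its coefficients, both of which behave well under the perturbation supplied by Proposition \ref{3:prop:subdiv}; the rest of the proof is either a direct appeal to Theorem \ref{2:thm:minmax} or a routine combinatorial bijection.
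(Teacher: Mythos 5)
Your proof is correct, but for the covering part it re-derives machinery that Theorem~\ref{2:thm:minmax} already supplies in full. The paper's intended argument is direct: for an arbitrary (not necessarily generic) slice $\H_s(F)$, let $H$ be its unique minimal polynomial (Theorem~\ref{2:thm:minmax}(\ref{2:item:minmax1})); then Theorem~\ref{2:thm:minmax}(\ref{2:item:minmax2}) with $\mu=(1,\dots,1)$ gives $c(H)\leq\gamma$ for some alternate odd composition $\gamma$ of length $s$ (see Lemmas~\ref{2:lem:nongen} and~\ref{2:lem:nongen2} for the length normalization), so $q:=p(\gamma)\in\P_{\min}(n,s)$ and $G:=H$ already witness the covering. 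Your version instead invokes the theorem only for generic slices and then reaches non-generic slices by perturbing via Proposition~\ref{3:prop:subdiv} and passing to a convergent subsequence; this works, but it duplicates the very perturbation mechanism that the appendix already carries out (Lemmas~\ref{2:lem:nongen}, \ref{2:lem:nongen2}) in order to make Theorem~\ref{2:thm:minmax} hold in the non-generic case in the first place. The trade-off: your route only needs the generic half of the theorem plus compactness (Lemma~\ref{2:lem:compcontr}) and root continuity, whereas the direct route is shorter and is what the paper means by ``follows directly.'' Your cardinality argument (deleting $\lceil s/2\rceil$ ones) is exactly the intended bijection. One small point, present equally in the paper's terse proof and in yours: the identification of $\P_{\min}(n,s)$ with the full set of partitions of $n$ into $s$ parts having at least $\lceil s/2\rceil$ ones requires that every such partition is actually realized as the partition of the minimal polynomial of some generic slice; you, like the paper, take this surjectivity for granted without argument.
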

\begin{proof}
Follows directly from Theorem \ref{2:thm:minmax} or from the less general version presented in \cite{meguerditchian1992theorem}.
\end{proof}

We show below that $\P_{\min}(n,s)$ is in general not the smallest Vandermonde covering. In order to estimate how good this Vandermonde covering is, we want to get lower bounds on the size of Vandermonde coverings. To this end, we need the following definition and some properties of the set of minimal and maximal partitions.

\begin{definition}
    We denote by $\P(n)$ the set of all partitions of $n$. The partial order on the set of all compositions of $n$ induces a partial order $\leq$ on $\P(n)$: For $p,q\in\P(n)$ we write $p\leq q$ if $p$ can be obtained from $q$ by summing some of the parts in $q$ and then reordering.
    Additionaly, if $\ell(q)=\ell(p)+1$, then we say $q$ \emph{covers} $p$. 
\end{definition}
Note that for two partitions $p$ and $q$, $p\leq q$ if and only if there are permutations $\sigma$ and $\tau$, such that $\sigma p \leq \tau q$ as compositions.

\begin{lemma}\label{4:lem_lower_Lemma}
\begin{enumerate}
    \item $\P_{\min}(n,s-1)\subseteq\P_{\max}(n,s-1)$.
    \item $|\P_{\min}(n,s)|=|\P_{\max}(n-1,s-1)|$.
    \item Let $P\subseteq \P(n,s)$ be a $(n,s)$-Vandermonde covering, then $P$ has to cover $\P_{\max}(n,s-1)$.
    \item Every partition in $\P(n,s)$ covers at most 
    \[\frac{\ceil*{\frac{s-1}{2}}^2+\ceil*{\frac{s-1}{2}}}{2}
    =\frac{\ceil*{\frac{s-1}{2}\rceil \lceil\frac{s+1}{2}}}{2}\] 
    partitions in $\P_ {\max}(n,s-1)$.
\end{enumerate}
\end{lemma}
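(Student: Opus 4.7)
The plan is to first translate the defining property of $\P_{\min}(n,s)$ and $\P_{\max}(n,s)$ into a purely combinatorial condition on the number of parts equal to $1$. Applying Theorem \ref{2:thm:minmax} to the whole slice $\H_s(F)$ (where the ambient composition is $\mu=1^n$, so $\lambda/\mu=\lambda$), in the generic case the minimal (resp.\ maximal) polynomial has composition of length exactly $s$ that is alternate odd (resp.\ alternate even). An alternate odd composition of length $s$ has its $\lceil s/2\rceil$ positions $s,s-2,\ldots$ forced to equal $1$, and conversely any partition of length $s$ with at least $\lceil s/2\rceil$ ones can be rearranged into such a composition. This yields
\[\P_{\min}(n,s)=\{p\in\P(n,s)\mid \#\{i:p_i=1\}\geq \lceil s/2\rceil\},\]
and analogously $\P_{\max}(n,s)$ is the same set with $\lceil s/2\rceil$ replaced by $\lfloor s/2\rfloor$.

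From this characterization, (1) follows since $\lceil(s-1)/2\rceil\geq\lfloor(s-1)/2\rfloor$, and (2) follows because removing a single part equal to $1$ is a bijection $\P_{\min}(n,s)\to\P_{\max}(n-1,s-1)$, using the identity $\lceil s/2\rceil-1=\lfloor(s-1)/2\rfloor$. For (3), given $q\in\P_{\max}(n,s-1)$ pick a generic slice $\H_{s-1}(F')$ whose unique maximal polynomial $G$ has partition $q$; such a slice exists by the very definition of $\P_{\max}(n,s-1)$. Then $\H_s(G)$ is precisely the set of polynomials of $\H_{s-1}(F')$ whose $s$-th coefficient matches $G$'s, and by uniqueness of the maximum in Theorem \ref{2:thm:minmax}(1) this set equals $\{G\}$. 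Applying the Vandermonde covering property to $\H_s(G)$ then forces some $p\in P$ with $p\geq p(G)=q$, and since $\ell(p)=s=\ell(q)+1$, this $p$ covers $q$ in the partition poset.

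For (4), fix $p\in\P(n,s)$ and let $k$ be the number of parts of $p$ equal to $1$, $r$ the number of distinct non-$1$ values, and $r'$ the number of those with multiplicity at least $2$. A partition $q$ covered by $p$ arises from merging some unordered pair of parts of $p$, and a direct multiset argument (if merges of $\{a,b\}$ and $\{c,d\}$ gave the same $q$, equating the resulting multisets forces $\{a,b\}=\{c,d\}$) shows that distinct unordered pair choices produce distinct partitions. The number of ones in $q$ equals $k-2$, $k-1$, or $k$ depending on whether the merged pair is $\{1,1\}$, $\{1,\text{non-}1\}$, or $\{\text{non-}1,\text{non-}1\}$, and $q\in\P_{\max}(n,s-1)$ iff this number is at least $m:=\lfloor(s-1)/2\rfloor$. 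Splitting on $k$ versus $m$ (no valid merges when $k<m$; only non-$1$/non-$1$ merges contribute $\binom{r}{2}+r'$ when $k=m$; all but the single $\{1,1\}$ merge are valid when $k=m+1$, giving $r+\binom{r}{2}+r'$; and all $1+r+\binom{r}{2}+r'$ merges are valid when $k\geq m+2$) and using the inequality $r+r'\leq s-k$ forced by $p$ having $s-k$ non-$1$ parts, a short optimization in each case yields the uniform bound $\binom{\lceil(s-1)/2\rceil+1}{2}$, attained either by $k=m$ with $r$ maximal or by $k=m+1$ with $r$ maximal.

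The main obstacle is the bookkeeping in part (4): one must first justify that distinct pair-merges produce distinct partitions so the count is not inflated, and then carry out the four-way case analysis on $k$ carefully enough to see that the maximum of the relevant expression in $r$ and $r'$, subject to $r+r'\leq s-k$, really does equal $\binom{\lceil(s-1)/2\rceil+1}{2}$ across all cases.
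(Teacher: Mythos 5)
Your proof is correct, and parts (3) and (4) take a genuinely different route from the paper's.

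For items (1) and (2) your approach matches the paper's: you reduce $\P_{\min}$ and $\P_{\max}$ to a count of parts equal to $1$, and observe that prepending/removing a single $1$ is a bijection. For item (3) the paper's argument perturbs $F$, considering the lower-dimensional slice $\H_s(F-\epsilon H)$ whose zero-dimensional strata all cover the composition of $F$, and then invokes the covering condition there. Your argument instead takes the slice $\H_s(G)$ itself and shows it is the singleton $\{G\}$ by the uniqueness part of Theorem~\ref{2:thm:minmax}; applying the covering condition directly to this singleton slice gives $q\geq p(G)$ with the length constraint forcing a covering relation. This is a cleaner and more self-contained route that avoids the perturbation analysis. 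For item (4) the paper's argument is terse: it asserts that $p$ has at most $\floor{(s-1)/2}+1$ parts different from $1$ and counts pairs among those. This is in fact a floor/ceiling slip (for $s$ even, $p=(s,\ldots,2,1)$ with a single $1$ already has $s-1>\floor{(s-1)/2}+1$ non-unit parts and covers more than $\binom{\floor{(s-1)/2}+1}{2}$ maximal partitions), while the stated bound $\binom{\ceil{(s-1)/2}+1}{2}=\ceil{(s-1)/2}\ceil{(s+1)/2}/2$ is correct. Your more careful four-way split on $k$ (using $r+r'\le s-k$ and the observation that distinct value-pairs yield distinct merged partitions, since $\{a+b,c,d\}=\{c+d,a,b\}$ forces $a+b=c+d$ and hence $\{a,b\}=\{c,d\}$) correctly establishes the bound as stated and is therefore the sounder argument. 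The only caveat, which the paper shares, is that the ones-counting characterization of $\P_{\min}$ and $\P_{\max}$ requires the implicit realizability claim that every partition of $n$ into $s$ parts with at least $\ceil{s/2}$ (respectively $\floor{s/2}$) ones does arise from a generic slice.
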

\begin{proof}
\begin{enumerate}
    \item Let $p\in \P_{\min}(n,s-1)$. Then $p$ is of the form
    \[p=\big(p_1,\dots,p_{\floor*{\frac{s-1}{2}}},\underbrace{1,\dots,1}_{\ceil*{\frac{s-1}{2}}-\text{times}}\big)\]
    since it corresponds to an alternate odd composition $(\mu_1,\dots,\mu_{s-1})$ by Theorem \ref{2:thm:minmax}. Now $p$ corresponds also to the alternate even composition $(\mu_{s-1},\mu_1,\mu_2,\dots,\mu_{s-2})$ 
    and therefore $p\in \P_{\max}(n,s-1)$.
    \item Follows directly from the bijection
    \[\abb{\phi}{\C_{\max}(n-1,s-1)}{\C_{\min}(n,s)}{(\mu_1,\dots,\mu_{s-1})}{(\mu_1,\dots,\mu_{s-1},1)}.\]
    \item Let $\mu\in \C_{\max}(n,s-1)$ and let $F$ be a polynomial with root multiplicities corresponding to $\mu$. Then by Theorem \ref{2:thm:minmax} and Proposition \ref{2:prop:pointormaxd}, $F$ is the maximal polynomial of $\H_{s-1}^\mu(F)$ and $\H_{s-1}^\mu(F)$ is of dimension $n-s+1$. Now for $\epsilon>0$ small enough there is some monic polynomial $H$ of degree $n-s$, such that $\H_{s}(F-\epsilon H)$ is $(n-s)$-dimensional with zero-dimensional strata corresponding to all compositions that cover $\mu$ by Proposition \ref{2:prop:pointormaxd}. Since $P$ is a $(n,s)$-Vandermonde covering, there has to be a $q\in P$ such that $q\geq p(G)$ for some $G\in \H_{s}(F-\epsilon T^s)$ and so we have $q\geq p(G)>p(F)$.
    \item In order for $p\in \P(n,s)$ to cover a partition in $\P_{\max}(n,s-1)$ there can be at most $\floor*{\frac{s-1}{2}}+1$ entries different from $1$ in $p$. One can now obtain all partitions in $\P_{\max}(n,s-1)$ that are covered by $p$ by summing two of the first $\floor*{\frac{s-1}{2}}+1$ entries in $p$. So $p$ covers at most
    \[\binom{\floor*{\frac{s-1}{2}}+1}{2}=\frac{\ceil*{\frac{s-1}{2}}^2+\ceil*{\frac{s-1}{2}}}{2}
    =\frac{\ceil*{\frac{s-1}{2}\rceil \lceil\frac{s+1}{2}}}{2}\]
    partitions in $\P_{\max}(n,s-1)$.
\end{enumerate}
\end{proof}

From this lemma, we get the following lower bounds on the size of any Vandermonde covering:

\begin{proposition}\label{4:trivial_lower_bound}
Let $P\subseteq \P(n,s)$ be a $(n,s)$-Vandermonde covering, then \[|P|\geq \ceil*{\frac{2\left|\P\left(n - \ceil*{\frac{s}{2}},\floor*{\frac{s}{2}}\right)\right|}{\ceil*{\frac{s-1}{2}} \ceil*{\frac{s+1}{2}}}}.\]
\end{proposition}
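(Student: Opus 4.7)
The plan is to combine items (3) and (4) of Lemma~\ref{4:lem_lower_Lemma} in a double counting argument, and then to rewrite $|\P_{\max}(n,s-1)|$ in the form appearing in the proposition by applying item (2) of the same lemma together with Theorem~\ref{4:upper_bound}.

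First I would use (3): every $\mu\in\P_{\max}(n,s-1)$ is covered by at least one $q\in P$. Item (4) bounds the number of $\mu\in\P_{\max}(n,s-1)$ covered by any single $q\in P\subseteq\P(n,s)$ by
\[
N_s\;:=\;\frac{\ceil*{(s-1)/2}\,\ceil*{(s+1)/2}}{2},
\]
where I use the identity $\ceil*{(s-1)/2}+1=\ceil*{(s+1)/2}$ (checked separately for $s$ even and odd) to match the shape of the bound in the lemma with the shape of the denominator in the proposition. Counting the pairs $(q,\mu)\in P\times \P_{\max}(n,s-1)$ with $q$ covering $\mu$ from both sides then yields $|\P_{\max}(n,s-1)|\leq |P|\cdot N_s$, and hence
\[
|P|\;\geq\;\frac{2\,|\P_{\max}(n,s-1)|}{\ceil*{(s-1)/2}\,\ceil*{(s+1)/2}}.
\]

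Next I would rewrite $|\P_{\max}(n,s-1)|$. Item (2) of Lemma~\ref{4:lem_lower_Lemma} applied with $n$ replaced by $n+1$ gives $|\P_{\max}(n,s-1)|=|\P_{\min}(n+1,s)|$, and Theorem~\ref{4:upper_bound} then identifies this with $|\P(n+1-\ceil*{s/2},\floor*{s/2})|$. The monotonicity $|\P(m+1,k)|\geq |\P(m,k)|$, obtained by adding $1$ to the largest part of a partition (and trivial when the right-hand side vanishes), therefore yields $|\P_{\max}(n,s-1)|\geq |\P(n-\ceil*{s/2},\floor*{s/2})|$. Combining this with the double counting inequality and using $|P|\in\N$ to take the ceiling produces the bound stated in the proposition.

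The main thing to check carefully is the bookkeeping for the denominator via the arithmetic identity $\ceil*{(s-1)/2}+1=\ceil*{(s+1)/2}$ and the off-by-one in the first argument of $\P(\cdot,\floor*{s/2})$ that forces the monotonicity step; beyond these purely combinatorial verifications, the proof is an immediate consequence of the preceding lemma and theorem.
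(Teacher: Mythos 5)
Your proof is correct and follows the same basic strategy as the paper: use Lemma~\ref{4:lem_lower_Lemma}~(3) to say $P$ must cover $\P_{\max}(n,s-1)$, item~(4) to bound how many partitions of $\P_{\max}(n,s-1)$ a single $q\in P$ can cover, apply a pigeonhole/double-counting argument, and finally rewrite $|\P_{\max}(n,s-1)|$ using item~(2) and Theorem~\ref{4:upper_bound}.

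You are, however, more careful with the index bookkeeping than the paper's own proof. The paper's chain of equalities asserts $|\P_{\max}(n,s-1)|=|\P_{\min}(n,s)|$, but Lemma~\ref{4:lem_lower_Lemma}~(2) in fact gives $|\P_{\max}(n,s-1)|=|\P_{\min}(n+1,s)|=\left|\P\!\left(n+1-\ceil*{s/2},\floor*{s/2}\right)\right|$, an off-by-one from what is printed. Your extra monotonicity step, $|\P(m+1,k)|\geq|\P(m,k)|$ by incrementing the largest part, patches this gap and recovers the stated (slightly weaker) bound. In fact your argument shows the somewhat tighter lower bound
\[
|P|\;\geq\;\ceil*{\frac{2\left|\P\!\left(n+1-\ceil*{\frac{s}{2}},\floor*{\frac{s}{2}}\right)\right|}{\ceil*{\frac{s-1}{2}}\ceil*{\frac{s+1}{2}}}},
\]
of which the proposition is a consequence; the paper's proof reaches the printed bound only because it silently conflates $n$ and $n+1$. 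Everything else in your argument (the identity $\ceil*{(s-1)/2}+1=\ceil*{(s+1)/2}$, the double count, taking the ceiling because $|P|\in\N$) is correct.
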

\begin{proof}
By Lemma \ref{4:lem_lower_Lemma} (3), $P$ has to cover $\P_{\max}(n,s-1)$. Every partition in $\P(n,s)$ covers at most 
\[\frac{\ceil*{\frac{s-1}{2}\rceil \lceil\frac{s+1}{2}}}{2}\] 
partitions in $\P_ {\max}(n,s-1)$ by Lemma \ref{4:lem_lower_Lemma} (4). From the pigeonhole principle, we get that we need at least
\[\ceil*{\frac{2|\P_{\max}(n,s-1)|}{\ceil*{\frac{s-1}{2}} \ceil*{\frac{s+1}{2}}}}
=\ceil*{\frac{2|\P_{\min}(n,s)|}{\ceil*{\frac{s-1}{2}} \ceil*{\frac{s+1}{2}}}}=\ceil*{\frac{2\left|\P\left(n - \ceil*{\frac{s}{2}},\floor*{\frac{s}{2}}\right)\right|}{\ceil*{\frac{s-1}{2}} \ceil*{\frac{s+1}{2}}}}\]
partitions to have at least one partition from every generic slice.
\end{proof}

This lower bound can be improved by considering recursively those maximal partitions that have $i$ entries different from $1$, which is the main idea behind the following theorem.

\begin{theorem}\label{4:recursive_lower_bound}
Let $P\subseteq \P(n,s)$ be a $(n,s)$-Vandermonde covering. Then \[|P|\geq \sum_{i=0}^{\floor*{\frac{s}{2}}} B_i, \]
where $B_0:=0$, $B_1:=1$ and
\[ B_{i}:= \ceil*{
2\frac{|\P(n-s+1,i)|-i B_{i-1}-B_{i-2}}{i^2+i}
} \]
for all $i\in \left\{2,\dots,\floor*{\frac{s}{2}}\right\}$.
\end{theorem}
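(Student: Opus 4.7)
The plan is to convert the Vandermonde covering condition into a system of integer linear inequalities on refined counts of partitions in $P$, and then establish the lower bound by induction on $k := \floor*{s/2}$ in lockstep with the recursion defining the $B_i$. The first ingredient is Lemma \ref{4:lem_lower_Lemma}(3): since $P$ is a Vandermonde covering, $P$ must cover $\P_{\max}(n,s-1)$. Stratifying the target set by the number $i$ of parts strictly greater than $1$ gives $\P_{\max}(n,s-1) = \bigsqcup_{i=0}^{k} \P_{\max}^{(i)}(n,s-1)$, and subtracting $1$ from each of those $i$ non-unit parts provides a bijection with $\P(n-s+1,i)$.

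For each $p \in P$, let $j(p)$ denote its number of parts $\geq 2$ and set $A_j := |\{p \in P : j(p) = j\}|$. A short case analysis of the three possible ways to merge two parts of $p$ (two $1$'s, a $1$ and a part $\geq 2$, or two parts $\geq 2$) shows that $p$ covers at most $1$, $j$, and $\binom{j}{2}$ partitions in $\P_{\max}^{(j+1)}$, $\P_{\max}^{(j)}$ and $\P_{\max}^{(j-1)}$ respectively. Summing over $p$ yields for every $i \in \{1,\ldots,k\}$ the covering constraint
\[
(\star_i)\qquad A_{i-1} + iA_i + \binom{i+1}{2} A_{i+1} \;\geq\; |\P(n-s+1,i)|,
\]
and the theorem is reduced to showing that any nonnegative integer tuple $(A_j)$ satisfying $(\star_1),\ldots,(\star_k)$ obeys $\sum_j A_j \geq \sum_{i=0}^{k} B_i$.

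I would then prove this by induction on $k$. The base $k \leq 1$ follows directly from $(\star_1)$ since $|\P(n-s+1,1)|=1$. For the inductive step I split on whether $A_{k+1} \geq B_k$. In that easy case the restriction $(A_j)_{j \leq k}$ is feasible for $(\star_1),\ldots,(\star_{k-1})$, so by the inductive hypothesis $\sum_{j \leq k} A_j \geq \sum_{i=0}^{k-1} B_i$, and adding $A_{k+1} \geq B_k$ finishes. In the delicate case $A_{k+1} = B_k - t$ with $t \geq 1$, the fact that $B_k$ is obtained by rounding up in its recursion gives $\binom{k+1}{2}(B_k-1) < |\P(n-s+1,k)| - kB_{k-1} - B_{k-2}$, and substituting into $(\star_k)$ forces
\[
A_{k-1} + kA_k \;\geq\; kB_{k-1} + B_{k-2} + 1 + (t-1)\binom{k+1}{2}.
\]
Because each additional unit placed into $A_k$ contributes $k$ to this left hand side but only $1$ to $\sum A_j$, an exchange-type argument shows that the extra mass forced on $\sum_{j \leq k} A_j$ beyond the baseline $\sum_{i=0}^{k-1} B_i$ is at least $\ceil*{(1 + (t-1)\binom{k+1}{2})/k}$, and the elementary inequality $(t-1)(k-1)k/2 + 1 > 0$ then shows this ceiling is $\geq t$. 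Adding back $A_{k+1} = B_k - t$ gives the desired bound.

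The main obstacle is this Case 2 exchange: the LP relaxation of the system $(\star_i)$ can have minimum strictly below $\sum_i B_i$, so no purely LP-duality argument is possible, and the ceiling in the recursion for the $B_i$ must be exploited through the integrality of the $A_j$. Making the marginal trade-off rigorous requires a careful accounting of slacks across the constraints $(\star_1),\ldots,(\star_{k-1})$ as one strengthens the $(\star_k)$ constraint; this is the technical core of the inductive step.
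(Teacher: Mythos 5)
Your reformulation via the variables $A_j$ (counting members of $P$ by their number of parts $\geq 2$) and the resulting system $(\star_i)$ is a valid necessary condition, and it is a genuinely different route than the paper takes. The paper keeps the $P_i$-stratification of $\P_{\max}(n,s-1)$ you use, but it never introduces the $A_j$'s; instead it peels off the $P_i$'s greedily, using the observation that a partition which covers something in $P_i$ can cover at most $i+1$ elements of $P_{i+1}$ and at most one of $P_{i+2}$, and accounts recursively for the leftover elements in each stratum to obtain the $B_i$'s directly. Your LP/ILP reformulation is cleaner as a statement of what must be proved, and you correctly observe that the LP relaxation of $(\star_i)$ can dip below $\sum_i B_i$, so the integrality and the ceilings have to be used.

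However, your proof has a real gap, and it is exactly where you flag it. In the easy branch $A_{k+1}\geq B_k$ the induction closes. In the branch $A_{k+1}=B_k-t$ with $t\geq 1$ you correctly extract $A_{k-1}+kA_k\geq kB_{k-1}+B_{k-2}+1+(t-1)\binom{k+1}{2}$, and then assert that this "forces" $\sum_{j\leq k}A_j\geq\sum_{i=0}^{k-1}B_i+\bigl\lceil (1+(t-1)\binom{k+1}{2})/k\bigr\rceil$. But the inductive hypothesis as you set it up only gives $\sum_{j\leq k}A_j\geq\sum_{i=0}^{k-1}B_i$ for \emph{every} feasible $(A_j)_{j\leq k}$; it says nothing about $A_{k-1}$ and $A_k$ individually, so you cannot conclude that the extra constraint on $A_{k-1}+kA_k$ must raise the objective by $t$. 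The constraint might be slack at some minimizer of the weaker system, or the extra cost might be absorbed by rebalancing among $A_0,\dots,A_{k-2}$; nothing in the induction rules this out. To make the "exchange" rigorous you would need a strengthened inductive hypothesis that tracks not just $\sum_{j\leq k}A_j$ but also the attainable range of $A_{k-1}+kA_k$ at near-optimal $(A_j)$, or a slack/potential bookkeeping along the lines of what you gesture at in the last sentence. As written, that step is asserted rather than proved, and it is the crux of the argument.
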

\begin{proof}
Denote by
\[P_i:=\big\{ q\in \P_{\max}(n,s-1) ~ \big| ~ 
|\{j\in [n] ~|~ q_j\neq 1\}|=i \big\} \] 
the partitions in $\P_{\max}(n,s-1)$ that have exactly $i$ entries different from $1$. Note the following:
\begin{enumerate}
    \item $|P_i|=|\P(n-s+1,i)|.$
    \item Every partition in $\P(n,s)$ covers at most $\binom{i+1}{2}=\frac{i^2+i}{2}$ partitions in $P_i$ by a similar argument as in the proof of Lemma \ref{4:lem_lower_Lemma} (4).
    \item A partition in $\P(n,s)$ that covers a partition in $P_i$, covers at most $i+1$ partitions in $P_{i+1}$ and at most one partition in $P_{i+2}$.
\end{enumerate}
Now, in order to cover all partitions in $\P_{\max}(n,s-1)$, we have to cover all partitions in $P_i$ for all $i\in \left[\floor*{\frac{s}{2}}\right]$. Combining (1), (2) and (3) we get recursively:
We need $B_1=1$ partition in $\P(n,s)$ to cover $P_1$. It covers at most $(1+1)B_1$ partitions in $P_2$ and at most $B_1$ partitions in $P_3$. To cover the at least $P_2-2 B_1$ remaining many partitions in $P_2$ we need by the pigeonhole principle at least 
\[B_2 = \ceil*{
\frac{|P_2|-2 B_{1}-B_{0}}{(2^2+2)/2}
} = \ceil*{
2\frac{|\P(n-s+1,2)|-2 B_{1}-B_{0}}{2^2+2}
} \]
additional partitions in $\P(n,s)$. Those partitions cover again at most $(2+1)B_2$ partitions in $P_3$ and at most $B_2$ partitions in $P_4$. To cover at least the $P_3-3 B_2-B_1$ remaining partitions in $P_3$ we need by the pigeonhole principle at least
\[B_3= \ceil*{
\frac{|P_3|-3 B_{2}-B_{1}}{(3^2+3)/2}
} = \ceil*{
2\frac{|\P(n-s+1,3)|-3 B_{2}-B_{1}}{3^2+3}
}\]
additional partitions in $\P(n,s)$. In general, if $B_i$ denotes the number of additional partitions needed to cover the remaining partitions in $P_i$, then
\[ B_{i}:= \ceil*{
2\frac{|\P(n-s+1,i)|-i B_{i-1}-B_{i-2}}{i^2+i}
}. \]
In total, we need at least $ \sum_{i=0}^{\floor*{\frac{s}{2}}} B_i $ partitions in $\P(n,s)$ to cover all partitions in $\P_{\max}(n,s-1)$.
\end{proof}

\subsection{Algorithmic improvements of Vandermonde coverings.}

In the following we want to present an algorithmic approach on how to obtain smaller - possibly optimal - Vandermonde coverings for small $s$ and $n$. To this end, we try to characterize if a set of compositions $S\subset \C(n,s)$ corresponds to the set of zero-dimensional strata of some hyperbolic slice. 

\begin{definition}
Let $S\subseteq \C(n,s)$. We call the upward closure of $S$
\[\L(S) := \{ \lambda ~|~ \text{there is a $\mu\in S$ with $\mu\leq \lambda$}\}\cup {(n)}\]
the \emph{poset} of $S$. We say that $\L(S)$ is a \emph{potential hyperbolic poset}, if for every $\lambda \in \L(S)$ there are unique $\mu_{\min},\mu_{\max}\in S$, such that
\begin{enumerate}
    \item $\mu_{\min}/\lambda$ is alternate odd and
    \item $\mu_{\max}/\lambda$ is alternate even.
\end{enumerate}
 Furthermore, we say that $\L(S)$ is a \emph{realizable hyperbolic poset}, if it is isomorphic to a hyperbolic poset $\L_s(F)$.
\end{definition}

\begin{remark}
One can also consider more general potential hyperbolic posets, where $S$ is a set of compositions of $n$ into at most $s$ parts. For this we construct $\L(S)$ analagous to Algorithm 3.12 in \cite{lien2023hyperbolic}, that is, by first taking the join of pairwise distinct elements of $S$ and then the upward closure of these joins.
\end{remark}

Theorem \ref{2:thm:minmax} states that every realizable hyperbolic poset is a potential hyperbolic poset. Note that the boundary complex of the dual $\partial(\L^\Delta(S))$ of a potential hyperbolic poset $\L(S)$ is a simplicial complex. One can see that the arguments in the proof of shellability in Section 2 only uses the structure of a poset, Theorem \ref{2:thm:minmax} and the fact that the the boundary complex of the dual is a pure simplicial complex, so all combinatorial results also hold for potential hyperbolic posets. In particular, we get the following:

\begin{theorem}\label{4:shellable_sphere_g-theorem}
Let $\L(S)$ be a potential hyperbolic poset and denote by $\partial(\L^\Delta(S))$ the boundary complex of the dual poset of $\L(S)$. Then
\begin{enumerate}
    \item $\partial(\L^\Delta(S))$ is a shellable simplicial complex and therefore a combinatorial sphere.
    \item The $h$-vector of $\partial(\L^\Delta(S))$ satisfies the "g-theorem", i.e. the inequalities stated in Corollary \ref{3:cor:g-thm}. 
    \item $|S|\leq \begin{cases}\binom{(n+s)/2-1}{s-1}+\binom{(n+s)/2-2}{s-1}, & \hspace{8mm} \text{if $n-s$ is even}\\
    2\binom{(n+s-3)/2}{s-1}, & \hspace{8mm} \text{if $n-s$ is odd}
    \end{cases}.$
\end{enumerate}
\end{theorem}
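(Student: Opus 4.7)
The plan is to verify that each step of Section 3 uses only the combinatorial structure encoded in the definition of a potential hyperbolic poset, and then invoke the corresponding sphere/$g$-theorem results as a black box. The three combinatorial ingredients pulled out from the realized setting are: (a) $\partial(\L^\Delta(S))$ is a pure simplicial complex of dimension $n-s-1$; (b) for every $\lambda\in\L(S)$ there exist unique $\mu_{\min},\mu_{\max}\in S$ such that $\mu_{\min}/\lambda$ is alternate odd and $\mu_{\max}/\lambda$ is alternate even; (c) a ``$\max$-iff-$\max$'' statement in the spirit of Lemma \ref{2:lem:maxiffmax}, saying that $\mu/\lambda$ is alternate odd (respectively even) if and only if $\mu/\lambda'$ is alternate odd (respectively even) for every $\lambda'\in\L(S)$ with $\mu\le\lambda'\le\lambda$. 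Here (a) follows from the upward closedness of $\L(S)$ via the same identification $\mu\mapsto\{\mu_1,\mu_1+\mu_2,\dots,n\}$ used in Lemma \ref{3:lem:sc}; (b) is built into the definition of a potential hyperbolic poset; and (c) is a purely order-theoretic consequence of the uniqueness clause in (b), provable by a short induction on the length of a maximal chain from $\mu$ to $\lambda$.

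With (a)--(c) in hand, I define a partial order $\leq_p$ on $S$ exactly as in Definition \ref{3:def:shell}, generated by the covering relations $\mu_1<_p\mu_2$ whenever there exists $\lambda\in\L(S)$ with $\ell(\lambda)=s+1$ such that $\mu_1/\lambda$ is alternate odd and $\mu_2/\lambda$ is alternate even. The proofs of Lemma \ref{3:lem:link} and Theorem \ref{3:thm:shell} then transfer word for word, so every total order refining $\leq_p$ (and its reverse) is a shelling of $\partial(\L^\Delta(S))$, giving Part (1) at the level of shellability. To upgrade to the combinatorial sphere conclusion via Proposition \ref{prelim:plsphere}, I check that every ridge lies in exactly two facets: a ridge corresponds to some $\lambda\in\L(S)$ with $\ell(\lambda)=s+1$, and the two facets containing it are precisely the $\mu_{\min}$ and $\mu_{\max}$ associated with $\lambda$ by (b); these are distinct because the unique entry equal to $2$ in $\mu_{\min}/\lambda$ and in $\mu_{\max}/\lambda$ must sit in positions of complementary parity (from the right), as forced by the alternate odd/alternate even conditions.

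For Part (2), repeating the argument of Corollary \ref{3:cor:palind} with the shelling produced above shows that $h_i$ equals both the number of $\mu\in S$ playing the role of $\mu_{\max}$ for exactly $i$ length-$(s+1)$ elements of $\L(S)$ and the number playing the role of $\mu_{\min}$ for exactly $i$ such elements, so the $h$-vector is palindromic. The remaining inequalities of Corollary \ref{3:cor:g-thm} then follow from Adiprasito's proof of the $g$-conjecture for shellable combinatorial spheres. Part (3) is obtained exactly as in Remark \ref{3:rem:bound}, combining the Upper Bound Theorem for simplicial spheres \cite{stanley1975upper} with the standard conversion between the $h$-vector and $f$-vector. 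The main obstacle I anticipate is a clean proof of statement (c): Lemma \ref{2:lem:maxiffmax} is proved in the appendix using polynomial and analytic ingredients, and the combinatorial version must be re-derived from the mere uniqueness assertion in the definition of a potential hyperbolic poset, but no new tools beyond the inductive argument sketched above should be needed.
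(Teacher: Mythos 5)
Your proposal matches the paper's own argument: the paper proves this theorem with a one‑paragraph observation that the shelling argument of Section 3 only uses (i) that $\partial(\L^\Delta)$ is a pure simplicial complex, (ii) the existence and uniqueness of the alternate-odd/even minimal and maximal facets furnished by Theorem~\ref{2:thm:minmax}, and (iii) the local-to-global ``max-iff-max'' transfer of Lemma~\ref{2:lem:maxiffmax}, and that these three facts hold for any potential hyperbolic poset. Your list (a)--(c) is exactly this, made explicit, and your check that a ridge lies in exactly two facets (the unique $2$ in a composition of $s+1$ into $s$ parts sits in a position of one or the other parity) correctly reproduces the combinatorial content of Corollary~\ref{3.cor:sph}.

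One remark worth recording: both the paper and your write-up take for granted that the relation $\leq_p$ of Definition~\ref{3:def:shell} really is a partial order on the facet set $S$, so that a total order refining it exists. In the realized case this is immediate, since the $(s+1)$-th coefficient strictly increases along every covering relation and hence no directed cycle can occur; in the purely combinatorial setting of a potential hyperbolic poset there is no such a priori linear functional, and antisymmetry of $\leq_p$ has to be deduced from the uniqueness clause in the definition. Your sketch of (c) --- that if $\mu/\lambda$ fails to be alternate odd and $\ell(\lambda)\ge s+2$, then one may decrease a suitable part of $\mu/\lambda$ to produce $\lambda'\lessdot\lambda$ with $\mu/\lambda'$ still not alternate odd --- is correct and is exactly what makes Lemma~\ref{3:lem:link} go through once acyclicity is granted, but it does not by itself rule out directed cycles in the covering graph. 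This is a shared implicit step rather than a new gap in your proposal, but it is the one place where ``transfers word for word'' deserves a justification that neither you nor the paper supplies.
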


Since all the known combinatorial properties of the poset of a generic hyperbolic slice hold for all potential hyperbolic posets, we don't know any combinatorial way to distinguish potential from realizable hyperbolic posets. Moreover, by computationally realizing all hyperbolic posets up to $s\leq n\leq 6$, we state the following conjecture:

\begin{conjecture}\label{4:conj_potential_realizable}
Every potential hyperbolic poset is realizable.
\end{conjecture}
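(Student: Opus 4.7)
The plan is to attempt a proof by induction on $|S|$, combined with a geometric deformation argument exploiting the shellability established in Theorem \ref{4:shellable_sphere_g-theorem}. The base case $|S| = 1$ is immediate: a single composition $\mu \in \C(n,s)$ is realized by taking any hyperbolic polynomial $F$ whose roots have multiplicities given by $\mu$, in which case $\H_s(F) = \{F\}$ has a unique stratum of the correct type.

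For the inductive step, fix a shelling $F_1, \dots, F_k$ of $\partial(\L^\Delta(S))$ and let $\mu^* = F_k$ be the final facet. The first task is to verify that $S' := S \setminus \{\mu^*\}$ still defines a potential hyperbolic poset. For each $\lambda \in \L(S')$ one must produce unique alternate-odd and alternate-even partners in $S'$; these exist and are unique in $\L(S)$ by hypothesis, so the only obstruction is if one of these partners equals $\mu^*$. The shelling condition forces the intersection of $\mu^*$ with the earlier facets to be pure of codimension one, and each ridge lies in exactly two facets, so crossing such a ridge furnishes a canonical replacement partner. Checking that these replacements indeed satisfy the alternate-odd/even requirement will rely on the local description of the shelling in Definition \ref{3:def:shell} together with Theorem \ref{2:thm:minmax}. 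Granting this, the inductive hypothesis yields a hyperbolic polynomial $F'$ with $\L_s(F') \cong \L(S')$.

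The final step is to deform $F'$ into some $F$ whose slice realizes $\L(S)$ by adding the missing facet $\mu^*$. The natural attempt is a perturbation $F' + \varepsilon H$ for a carefully chosen polynomial $H$ of degree less than $n-s$, in the spirit of Proposition \ref{3:prop:subdiv}; that proposition shows such perturbations can create new zero-dimensional strata, and one would choose $H$ so as to introduce exactly the stratum corresponding to $\mu^*$. Theorem \ref{2:thm:minmax} sharply constrains this step: any newly created zero-dimensional stratum must arise as the minimal or maximal polynomial of an existing positive-dimensional stratum of $\H_s(F')$, which should allow us to identify the correct direction of perturbation from the combinatorics of how $\mu^*$ is attached to $S'$ in the shelling.

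The main obstacle is precisely this last deformation: controlling which strata appear or disappear when crossing a single wall of the discriminant in the coefficient space $\R^s$ is delicate, because in principle several combinatorial changes can occur simultaneously, and the wall-crossing classification for hyperbolic slices is not developed in the paper. A related and equally nontrivial difficulty is showing that $S'$ is always a potential hyperbolic poset, since the shelling order itself was defined (in Section 3) using minimality data that depends on a realization. An alternative that bypasses some of these issues would be to parameterize realizations directly by points in $\R^s$, show that the resulting semi-algebraic partition is compatible with the poset of potential hyperbolic posets, and use a dimension-count or connectedness argument to conclude every chamber is nonempty; but this route still ultimately requires the same wall-crossing analysis, which I expect to be the crux of the conjecture and likely the source of new phenomena not already visible in the combinatorial framework.
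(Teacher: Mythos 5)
This statement is labeled a \emph{conjecture} in the paper, and the authors offer no proof of it: their evidence is entirely computational (``by computationally realizing all hyperbolic posets up to $s\leq n\leq 6$''). So there is no proof in the paper against which to compare your attempt; the question is whether your sketch actually closes the conjecture, and it does not.

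You have, to your credit, identified the two genuine gaps yourself, and both are fatal to the argument as written. First, the claim that deleting the last facet $\mu^*$ of a shelling leaves a potential hyperbolic poset $\L(S')$ is unsubstantiated. Uniqueness of alternate-odd/alternate-even partners in $\L(S)$ does not by itself give you a canonical replacement once you remove $\mu^*$: the shelling guarantees each ridge of $\mu^*$ is shared with an earlier facet, but the facet on the other side of a ridge need not supply a partner of the \emph{correct parity} for every $\lambda$ that previously pointed to $\mu^*$, and you give no mechanism for propagating parity correctly across all of $\L(S')$. Moreover, the shelling order in Theorem \ref{3:thm:shell} and Definition \ref{3:def:shell} is built from minimality data of an actual realization; for an abstract potential hyperbolic poset you only know shellability exists (Theorem \ref{4:shellable_sphere_g-theorem}), not that any shelling is compatible with a yet-to-be-constructed geometric deformation. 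Second, the deformation step $F' \mapsto F' + \varepsilon H$ is not controlled: Proposition \ref{3:prop:subdiv} perturbs by a single monomial $\epsilon T^{n-s}$ and addresses the non-generic $\to$ generic transition, not the surgical insertion of one prescribed zero-dimensional stratum while leaving the rest of the poset untouched. You acknowledge that wall-crossing in the discriminant can change several strata at once and that the needed classification ``is not developed in the paper''; this is exactly right, and it is the heart of what remains open. Until that local analysis exists, the inductive step is a plan, not a proof.

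Finally, the base case $|S|=1$ deserves care: for $\L(S)$ with $S=\{\mu\}$, $\ell(\mu)=s$, to be a potential hyperbolic poset you need $\mu/\lambda$ to be simultaneously alternate odd and alternate even for every $\lambda>\mu$ in $\L(S)$, which forces $\L(S)$ to be trivial; the realization $\H_s(F)=\{F\}$ only corresponds to the degenerate case, so the induction really needs a nontrivial base (likely $|S|=2$, the one-dimensional slice) before the purported inductive step can engage.
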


Since it is easy to check if a set of compositions has a potential hyperbolic poset, one can compute better Vandermonde coverings for small $n$ and $s$.

\begin{example}\label{4:ex_6,4}
For $n=6$ and $s=4$ there are $10$ compositions of $6$ into $4$ parts. One can check that out of the $2^{10}$ subsets only $17$ have potential hyperbolic posets. Up to symmetry - we identify $S$ with $\tilde S:=\{(\mu_4,\dots,\mu_1) ~|~ \mu \in S\}$ - we get the $11$ subsets
\begin{gather*}
\{(1, 1, 1, 3), (1, 1, 2, 2), (1, 1, 3, 1)\},\\
\{(1, 1, 3, 1), (1, 2, 2, 1), (1, 3, 1, 1)\},\\
\{(1, 1, 1, 3), (2, 1, 1, 2), (2, 1, 2, 1)\},\\
\{(1, 1, 2, 2), (1, 1, 3, 1), (1, 2, 1, 2), (1, 2, 2, 1)\},\\
\{(1, 2, 1, 2), (1, 2, 2, 1), (2, 1, 1, 2), (2, 1, 2, 1)\},\\
\{(1, 1, 1, 3), (1, 2, 2, 1), (2, 1, 1, 2), (3, 1, 1, 1)\},\\
\{(1, 1, 1, 3), (1, 1, 2, 2), (2, 1, 2, 1), (2, 2, 1, 1)\},\\
\{(1, 1, 1, 3), (1, 1, 3, 1), (2, 1, 1, 2), (2, 2, 1, 1)\},\\
\{(1, 1, 2, 2), (1, 2, 1, 2), (1, 2, 2, 1), (2, 1, 2, 1), (2, 2, 1, 1)\},\\
\{(1, 1, 1, 3), (1, 1, 2, 2), (1, 2, 2, 1), (2, 2, 1, 1), (3, 1, 1, 1)\} \text{ and}\\
\{(1, 1, 3, 1), (1, 2, 1, 2), (1, 2, 2, 1), (2, 1, 1, 2), (2, 2, 1, 1).\}
\end{gather*}
From this we get that $\{(2,2,1,1)\}$ is a $(6,4)$-Vandermonde covering, which is also optimal in this case.
\end{example}

Example \ref{4:ex_6,4} generalizes in the following way:

\begin{proposition}\label{4:thm_covering_(n,n-2)}
$\{(2,2,1,\dots,1)\}$ is a $(n,n-2)$-Vandermonde covering.
\end{proposition}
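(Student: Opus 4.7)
The plan is to split the argument according to whether $\H_{n-2}(F)$ is generic in the sense of Subsection~3.1. Throughout, assume $\H_{n-2}(F)\neq\emptyset$ and show it contains a polynomial $G$ with $p(G)\leq(2,2,1,\dots,1)$.

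For non-generic slices, take any $G\in\H_{n-2}(F)$ with at most $n-3$ distinct roots. Its partition $p(G)$ has length at most $n-3$, so $\sum_i(p(G)_i-1)=n-\ell(p(G))\geq 3$. A short combinatorial check shows this forces $p(G)$ to contain either a part $\geq 4$ or at least two parts $\geq 2$; in either case one can assign the two $2$s of $(2,2,1,\dots,1)$ into blocks whose sums realize the parts of $p(G)$ (both $2$s into one large block in the first case, one $2$ into each of two medium blocks in the second), which gives $p(G)\leq(2,2,1,\dots,1)$.

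For generic slices, the plan is to invoke Corollary~\ref{3.cor:sph}: $\partial(\L_{n-2}^\Delta(F))$ is a combinatorial $1$-sphere, i.e., a graph-theoretic cycle. Its vertices are the $1$-dimensional strata of $\H_{n-2}(F)$, indexed by length-$(n-1)$ compositions with a single $2$; for $k\in[n-1]$, denote by $E_k$ the stratum whose composition has its $2$ in position $k$. Its edges are the $0$-dimensional strata, indexed by length-$(n-2)$ compositions of $n$, which fall into two types by partition: Type A with partition $(3,1,\dots,1)$, and Type B with partition $(2,2,1,\dots,1)$. Unpacking the covering relations, a Type A $0$-stratum with its $3$ in position $k$ is covered only by $E_k$ and $E_{k+1}$ (the $3$ splits as $2+1$ either way), while a Type B $0$-stratum with its $2$s in positions $k<k'$ is covered only by $E_k$ and $E_{k'+1}$ (one of the $2$s splits as $1+1$). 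Hence the Type A edges lie inside the path $E_1-E_2-\cdots-E_{n-1}$, which is acyclic. Since a cycle cannot be assembled from edges of an acyclic subgraph, the boundary cycle must use at least one Type B edge, yielding a $G\in\H_{n-2}(F)$ with $p(G)=(2,2,1,\dots,1)$.

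The main obstacle is the bookkeeping in the generic case, specifically identifying, for each $0$-dimensional stratum, the exactly two $1$-dimensional strata it is adjacent to in the boundary complex. Once this incidence structure is translated into the statement that the Type A edges form a subgraph of a path, the path-versus-cycle argument closes the proof instantly.
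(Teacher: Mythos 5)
Your proof is correct, and it takes a genuinely different route from the paper's in the generic case. The paper argues as follows: if every zero-dimensional stratum had partition $(3,1,\dots,1)$, then by Theorem~\ref{2:thm:minmax} each such composition $(1,\dots,1,3,1,\dots,1)$ is automatically alternate odd or alternate even (depending on the parity of the position of the~$3$), hence each such stratum is the unique minimal or the unique maximal polynomial of $\H_{n-2}(F)$; that gives at most two zero-dimensional strata, whereas the two-dimensional, compact slice must have at least three extreme points, all with at most $n-2$ distinct roots by Theorem~2.8 of Riener--Schabert. You instead invoke the full weight of Corollary~\ref{3.cor:sph}, reducing the problem to the observation that a graph-theoretic cycle cannot be built from edges of the path $E_1-\cdots-E_{n-1}$; the bookkeeping of which two vertices each Type~A or Type~B edge joins is correct. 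The paper's argument uses lighter tools (just Theorem~\ref{2:thm:minmax} and a convex-geometry count), while yours gives a cleaner combinatorial picture and in fact localizes where the $(2,2,1,\dots,1)$ stratum must appear (as a "chord" of the path). You are also more careful than the paper in making the generic/non-generic dichotomy explicit: the paper's proof tacitly assumes all zero-dimensional strata come from length-$(n-2)$ compositions, which requires noting (as you do) that any partition of $n$ with at most $n-3$ parts already lies below $(2,2,1,\dots,1)$ in the dominance order on coarsenings. Both routes work; yours is a valid alternative.
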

\begin{proof}
Suppose it is not a Vandermonde covering. Then there would be a hyperbolic slice $\H_s(F)$ with all zero-dimensional strata corresponding to compositions with one entry equal to $3$ and the other entries equal to $1$. By Theorem \ref{2:thm:minmax} all of these compositions correspond to minimal or maximal polynomials in $\H_s(F)$ and therefore $\H_s(F)$ contains at most two zero-dimensional strata. But by Proposition \ref{2:prop:pointormaxd}, $\H_s(f)$ is two-dimensional and thus have at least three extremal points and by Theorem 2.8 in \cite{riener2024linear}, the extremal point of $\H_s(f)$ have at most $s$ distinct roots. This is a contradiction to $\H_s(F)$ having at most two zero dimensional strata
\end{proof}

Since there are $k=\binom{n-1}{s-1}$ compositions of $n$ into $s$ parts, the procedure in Example \ref{4:ex_6,4} becomes too computationally expensive to apply directly when $n$ and $s$ are large since it involves considering $2^k$ subsets. However, we can use some weaker conditions to cut down this big set into a more managable set and that makes it easier to apply our previous method. For example, since we know that every potential hyperbolic poset contains $(1,\dots,1)$, we just have to check all the subsets of compositions with exactly one alternate even and one alternate odd composition. Furthermore, we can apply the bounds stated in Theorem \ref{4:shellable_sphere_g-theorem} and we also know that we need at least $n-s+1$ compositions of length $s$ by the argument in the proof of Theorem \ref{4:thm_covering_(n,n-2)}. This allows computations of all potential hyperbolic posets up to $s,n\leq 9$ on a standard computer with no more than a few hours running time.

\begin{example}\label{4:ex_8,4}
For $n=8$ and $s=4$, we get from Theorem \ref{4:upper_bound} that there is a Vandermonde covering with $3$ partitions and from Theorem \ref{4:recursive_lower_bound} we know that we need at least $1$ partition. By computing all the potential hyperbolic posets we get several Vandermonde coverings with two elements, e.g.
\[\{(3,2,2,1),(4,2,1,1)\},\]
and one can show that there is no Vandermonde covering with only one partition by realizing appropriate potential hyperbolic posets.
\end{example}

\section{Conclusion}

We studied the rich geometric and combinatorial structure of hyperbolic slices. Although we could not show the conjectured polytopality, we were able to establish the weaker result that dual posets of generic hyperbolic posets are combinatorial spheres. We conjectured in \ref{3:conj:comb.sphere} that this is true for general hyperbolic posets. Moreover, we obtained an upper bound theorem for hyperbolic posets from the sphericity of the boundary of the dual posets. We have some computational evidence that this bound is sharp for the number of vertices and maybe also in general. It could be interesting to try to construct and study such "cyclic hyperbolic slices".

It is well known, that every polytope can be obtained as an affine slice of a higher-dimensional simplex. Since a generic hyperbolic poset $L_s(F)$ is a simplex for $s=2$ (see the proof of Theorem 3.10 in \cite{lien2023hyperbolic}), we can see hyperbolic slices as certain affine slices of "hyperbolic simplices". So we ask the following, which is even stronger then the conjectured sharpness of the Upper Bound Theorem:
\begin{question}
    For any $f$-vector of a simple polytope, there are $n,s\in \N$ and a polynomial $F$ such that $\L_s(F)$ has the same $f$-vector. 
\end{question}
In the second part of the paper, we introduced and studied Vandermonde coverings which allow us to strengthen Timofte's degree principle. We showed how to compute better Vandermonde coverings for small $n$ and $s$ by introducing potential hyperbolic posets and conjectured that potential hyperbolic posets are realizable. Such computations might be used to find patterns for Vandermonde coverings for bigger $n$ and $s$.

We suspect that many of our results can be translated to other finite reflection groups, at least to the hyperoctahedral group.

\appendix

\section{Proof of Theorem \ref{2:thm:minmax}}

In this section we prove second part of the following theorem:
\minmax*

We will need some more tools before we get started with the proof and recall that we will let $l=\ell(\mu)>s$ for the proof. For the initial step of the proof we will use Lagrange multipliers so we need a local definition of minimality and maximality:

\begin{definition}
    We call $H=T^n+H_1T^{d-1}+\dots+H_d\in\H_s^\mu(F)$ a \emph{locally minimal} (resp. \emph{locally maximal}) polynomial of the stratum $\H_s^\mu(F)$ if $H_{s+1}\leq G_{s+1}$ (resp. $H_{s+1}\geq G_{s+1}$) for all $G=T^n+G_1T^{d-1}+\dots+G_d\in N$, where $N\subset \H_s^\mu(F)$ is some open neighbourhood of $H$.
\end{definition}

\begin{lemma}\label{2:lem:sdistinct}
    A locally minimal or locally maximal polynomial in $\H_s^\mu(F)$ has at most $s$ distinct roots.
\end{lemma}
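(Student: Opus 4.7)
The plan is to apply the Fritz John form of the Lagrange multiplier theorem to the point of $\V_s^\nu(F) \cap \W_{l'}$ that corresponds to $H$ under the Vieta parameterization from Remark \ref{2:rem:vieta}. First I would reduce to working inside the composition stratum of $H$ itself. Setting $\nu := c(H)$, we have $\nu \leq \mu$ and hence $\H_s^\nu(F) \subseteq \H_s^\mu(F)$, so local extremality of the $(s+1)$-th coefficient on $\H_s^\mu(F)$ immediately restricts to local extremality on $\H_s^\nu(F)$. By Proposition \ref{2:prop:pointormaxd}, $H$ lies in the relative interior of $\H_s^\nu(F)$, which under the parameterization of Remark \ref{2:rem:vieta} corresponds to a point $y = (y_1, \dots, y_{l'}) \in \V_s^\nu(F)$ with $y_1 < \dots < y_{l'}$, where $l' = \ell(\nu)$. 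Assume for contradiction that $l' > s$.

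Next, Fritz John yields scalars $\alpha_0, \alpha_1, \dots, \alpha_s$, not all zero, such that
\[
\alpha_0 \nabla E_{s+1}^\nu(y) = \sum_{i=1}^s \alpha_i \nabla E_i^\nu(y).
\]
Differentiating $E_k^\nu(y) = E_k(\underbrace{y_1, \dots, y_1}_{\nu_1}, \dots, \underbrace{y_{l'}, \dots, y_{l'}}_{\nu_{l'}})$ gives $\partial E_k^\nu / \partial y_j = \nu_j E_{k-1}(\hat y_j)$, where $\hat y_j$ denotes the root multiset obtained by deleting a single copy of $y_j$. Since each $\nu_j > 0$, the $j$-th coordinate of the Fritz John equation reads
\[
\alpha_0 E_s(\hat y_j) = \sum_{i=1}^s \alpha_i E_{i-1}(\hat y_j), \qquad j = 1, \dots, l'.
\]
Iterating the identity $E_k^\nu(y) = E_k(\hat y_j) + y_j E_{k-1}(\hat y_j)$ yields $E_k(\hat y_j) = \sum_{a=0}^k (-y_j)^a E_{k-a}^\nu(y)$, so the displayed equation becomes $Q(y_j) = 0$, where
\[
Q(T) := \alpha_0 \sum_{a=0}^s (-T)^a E_{s-a}^\nu(y) - \sum_{i=1}^s \alpha_i \sum_{a=0}^{i-1} (-T)^a E_{i-1-a}^\nu(y)
\]
is a single polynomial of degree at most $s$, independent of $j$, whose coefficients depend only on the $\alpha_i$ and the fixed quantities $E_k^\nu(y)$.

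To finish I would check that $Q \not\equiv 0$. The coefficient of $T^s$ is $(-1)^s \alpha_0$, so if $\alpha_0 \neq 0$ we are done; if $\alpha_0 = 0$, a descending induction reading off the coefficient of $T^{s-1}, T^{s-2}, \dots, T^0$ forces $\alpha_s, \alpha_{s-1}, \dots, \alpha_1$ to vanish in turn (using $E_0^\nu = 1$), contradicting that not all $\alpha_i$ are zero. Hence $Q$ is a nonzero polynomial of degree at most $s$ with at least $l' > s$ distinct roots $y_1, \dots, y_{l'}$, a contradiction. The main obstacle I anticipate is purely bookkeeping: one has to make sure that local extremality on the relative interior of $\H_s^\nu(F)$ translates cleanly into local extremality of $\pm E_{s+1}^\nu$ on $\V_s^\nu(F)$ (the reduction to $\nu = c(H)$ is what buys this, since inside the open Weyl chamber the Vieta parameterization is a local homeomorphism), and then carefully track signs through the descending-induction check that $Q$ is not identically zero.
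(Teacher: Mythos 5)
Your proof is correct, but it takes a genuinely different route from the paper's. The paper proves this lemma topologically: by Lemma \ref{2:lem:projhom} the stratum is homeomorphic to a closed full-dimensional subset of $\R^{\ell(\mu)-s}$ under the truncated coefficient map $P^{n-l}$, so a locally extremal first free coefficient cannot occur at an interior point; hence a locally extremal $H$ lands in the relative boundary, which by Proposition \ref{2:prop:pointormaxd} consists of polynomials with composition strictly below $\mu$. Passing to the stratum $\H_s^{c(H)}(F)$ and iterating gives the bound by a short downward induction in the poset of strata. Your argument instead reduces to the composition stratum $\nu=c(H)$ (so that $y$ is interior to the Weyl chamber), applies Fritz John on $\V_s^\nu(F)$, factors out the positive multiplicities $\nu_j$ from the gradient identity $\partial E_k^\nu/\partial y_j=\nu_j E_{k-1}(\hat y_j)$, and then uses the recursion $E_k(\hat y_j)=\sum_{a=0}^{k}(-y_j)^a E_{k-a}^\nu(y)$ to produce a single nonzero univariate polynomial $Q$ of degree at most $s$ vanishing at all $l'$ distinct coordinates of $y$, forcing $l'\le s$. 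I checked the coefficient extraction: the $T^s$ coefficient is $(-1)^s\alpha_0$, and with $\alpha_0=0$ the coefficients of $T^{s-1},\dots,T^0$ successively kill $\alpha_s,\dots,\alpha_1$, so $Q\not\equiv 0$; the reduction to $\nu=c(H)$ does indeed make the Vieta parameterization a local homeomorphism near $y$, so local extremality transfers as you claim. Interestingly, your Lagrange-multiplier route is much closer in spirit to what the paper does \emph{later} in the appendix (the discussion preceding Lemma \ref{lem:princminor}, where a degree-$s$ polynomial $Q$ vanishing at the distinct roots is produced from power-sum gradients). What the paper's shorter argument buys for this particular lemma is that it leans entirely on the already-established structure (Lemma \ref{2:lem:projhom}, Proposition \ref{2:prop:pointormaxd}) and avoids any differential calculus; what yours buys is a self-contained quantitative explanation of \emph{why} the bound is $s$, without needing the inductive descent through lower strata.
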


\begin{proof}
    Assume $\H_s^\mu(F)$ is at least one-dimensional since the other cases follow from Proposition \ref{2:prop:pointormaxd} and let $l=\ell(\mu)$. By Lemma \ref{2:lem:projhom}, $P^{n-l}:\H_s^\mu(F)\to \mathbb{R}^{l-s}$ is a homeomorphism onto its image which is closed in $\mathbb{R}^{l-s}$. So by Proposition \ref{2:prop:pointormaxd}, the image of the polynomials whose composition is strictly smaller than $\mu$ make up the boundary of $P^{n-l}(\H_s^\mu(F))$. Thus a locally minimal or locally maximal polynomial lies in the relative boundary and therefore has strictly less than $l$ roots and so the statement follows inductively. 
\end{proof}

If $a\in \R^{n-s}$ we will let $B_\epsilon(a)$ denote the open ball about $a$ of radius $\epsilon$.

\begin{lemma}\label{2:lem:loceq}
    A polynomial $H\in\H_s^\mu(F)$ is locally minimal (resp. locally maximal) if and only if it is minimal (resp. maximal).
\end{lemma}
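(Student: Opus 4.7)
The ``if'' direction — global implies local — is immediate from the definitions, so the plan is to show that a locally minimal $H \in \H_s^\mu(F)$ is in fact globally minimal; the argument for locally maximal polynomials is completely symmetric. I would proceed by induction on $d := \ell(\mu) - s = \dim \H_s^\mu(F)$.

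For the base cases: if $d = 0$ the stratum is a single polynomial by Proposition \ref{2:prop:pointormaxd} and there is nothing to prove. If $d = 1$, Lemma \ref{2:lem:compcontr} gives that $\H_s^\mu(F)$ is compact and contractible, and Lemma \ref{2:lem:projhom} identifies it homeomorphically via $P^{n-l}$ with a compact connected subset of $\mathbb{R}$, that is, a closed interval $[\alpha, \beta]$. The first-coordinate function on a closed interval admits exactly one local minimum, namely the left endpoint $\alpha$, which coincides with the global minimum.

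For the inductive step, assume $d \ge 2$ and that the lemma is known for strata of strictly smaller dimension. By Lemma \ref{2:lem:sdistinct}, the locally minimal $H$ has at most $s$ distinct roots, so $\ell(c(H)) \le s < \ell(\mu)$ and $H$ lies in the relative boundary of $\H_s^\mu(F)$. For any stratum $\H_s^\nu(F) \subsetneq \H_s^\mu(F)$ containing $H$, the subspace topology inherited from $\H_s^\mu(F)$ makes $H$ remain locally minimal on $\H_s^\nu(F)$. Since $\dim \H_s^\nu(F) < d$, the inductive hypothesis gives that $H$ is globally minimal on $\H_s^\nu(F)$; hence $H$ is globally minimal on every strictly smaller sub-stratum through it. Invoking Lemma \ref{2:lem:maxiffmax} then upgrades this to global minimality on all of $\H_s^\mu(F)$.

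The hard part will be that final upgrade, for which Lemma \ref{2:lem:maxiffmax} is doing genuine work: compactness plus contractibility of the image $K := P^{n-l}(\H_s^\mu(F))$ alone does not imply local-equals-global for a coordinate function (as a ``fat U'' in the plane demonstrates). If the ordering of lemmas in the appendix made Lemma \ref{2:lem:maxiffmax} unavailable at this point, a self-contained substitute would have to exploit the stratified semi-algebraic structure of $\H_s^\mu(F)$: given a hypothetical witness $G \in \H_s^\mu(F)$ with $G_{s+1} < H_{s+1}$, one would deform $G$ along a semi-algebraic path into a codimension-one sub-stratum containing $H$ and contradict the inductive hypothesis on that smaller stratum.
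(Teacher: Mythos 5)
Your induction plan hinges on invoking Lemma \ref{2:lem:maxiffmax} to upgrade global minimality on every proper sub-stratum through $H$ to global minimality on $\H_s^\mu(F)$. But in the paper that lemma is proved \emph{after} Lemma \ref{2:lem:loceq}, and its proof begins ``Suppose $H$ is not minimal in $\H_s^\mu(F)$, then by Lemma \ref{2:lem:loceq} it is not locally minimal.'' So Lemma \ref{2:lem:maxiffmax} already depends on the lemma you are trying to prove; using it here would be circular. You flag this danger in your last paragraph, but the substitute you sketch (``deform $G$ along a semi-algebraic path into a codimension-one sub-stratum containing $H$'') is not an argument: there is no reason a witness $G$ with $G_{s+1}<H_{s+1}$, which typically has composition $\mu$ and thus sits in the relative interior, can be pushed onto a codimension-one stratum \emph{through $H$} while keeping its $(s+1)$-st coefficient below $H_{s+1}$. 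Without that, the inductive step collapses.

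The paper's own proof is not inductive at all. It examines the sub-stratum $\H_{s+1}^\mu(H)$, i.e.\ the level set of the first free coefficient inside $\H_s^\mu(F)$ through $H$. If that level set is at least one-dimensional, Proposition \ref{2:prop:pointormaxd} produces polynomials of composition $\mu$ in it arbitrarily close to $H$; via Lemma \ref{2:lem:projhom} those are interior points of the full-dimensional image $P^{n-l}(\H_s^\mu(F))$, and any neighbourhood of an interior point contains points with smaller first coordinate, contradicting local minimality of $H$. If instead $\H_{s+1}^\mu(H)=\{H\}$, contractibility of $\H_s^\mu(F)$ (Lemma \ref{2:lem:compcontr}) gives a path from $H$ to the genuine minimiser; since the level set through $H$ is a single point, the first free coefficient must drop strictly below $H_{s+1}$ as soon as the path leaves $H$ (after reparametrising past the last visit to $H$), again contradicting local minimality. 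Your base cases $d=0,1$ are fine and agree in spirit with the paper, but the inductive step needs to be replaced by this dimension-of-the-level-set case analysis (or something equivalent that does not route through Lemma \ref{2:lem:maxiffmax}).
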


\begin{proof}
    One implication is clear, so suppose $H\in\H_s^\mu(F)$ is locally minimal but not minimal. If $\H^\mu_{s+1}(H)$ is at least one-dimensional then by Proposition \ref{2:prop:pointormaxd}, for any $\epsilon>0$ there is a polynomial $G\in\H_{s+1}^\mu(F)\cap B_\epsilon(H)$ with composition $\mu$. Thus, by Lemma \ref{2:lem:projhom}, there is a $\delta$ with $0<\delta<\epsilon$ such that $P^{n-l}(\H_s^\mu(F))\cap B_\delta(P^{n-l}(G))$ lies in the interior of $P^{n-l}(\H_s^\mu(F))$. So there is a polynomial in $\H_s^\mu(F)\cap B_\epsilon(H)$ whose first free coefficient is smaller than the first free coefficient of $H$ contradicting the local minimality of $H$.

    Thus, by Proposition \ref{2:prop:pointormaxd}, $H^\mu_{s+1}(H)$ must be a point. Since $H^\mu_{s}(F)$ is contractible, there is a path, $\Phi:[0,1]\to H_s^\mu(F)$, where $[0,1]$ is the unit interval, from $H$ to the minimal polynomial. Since $H^\mu_{s+1}(H)$ is a point we may assume that the first free coefficient of $\Phi(y)$ is strictly smaller than the first free coefficient of $H$ for all $y\in(0,1]$. But this is a contradiction since $H$ was assumed to be locally minimal. Thus if $H$ is locally minimal, it must also be minimal. The proof for locally maximal polynomials works analogously.
\end{proof}

It will be useful to work with power sums instead of elementary symmetric polynomials, so we need the follwing lemma:
\begin{lemma}\label{2:lem:powersumiffelem}
    Let $a,b\in\R^n$ and suppose $E_i(a) = E_i(b)$ for all $i\in [s]$, then $P_{s+1}(a)>P_{s+1}(b)$ if and only if $(-1)^{s+1}E_{s+1}(a)<(-1)^{s+1}E_{s+1}(b)$.
\end{lemma}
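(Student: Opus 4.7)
The plan is to use Newton's identities, which give a triangular relationship between the power sums $P_1, P_2, \ldots$ and the elementary symmetric polynomials $E_1, E_2, \ldots$. Specifically, for any $k \le n$ one has the identity
\[
k E_k \;=\; \sum_{i=1}^{k} (-1)^{i-1} E_{k-i} P_i,
\]
with the convention $E_0 = 1$. This relation can be solved for $P_k$ as
\[
P_k \;=\; (-1)^{k-1} k E_k \;-\; \sum_{i=1}^{k-1} (-1)^{i-1} E_{k-i} P_i,
\]
so $P_k$ is expressible as a polynomial in $E_1, \ldots, E_k$ and $P_1, \ldots, P_{k-1}$.

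First, I would apply Newton's identities inductively for $k = 1, 2, \ldots, s$ to conclude that each of $P_1(x), \ldots, P_s(x)$ depends only on $E_1(x), \ldots, E_s(x)$. Consequently, the hypothesis $E_i(a) = E_i(b)$ for all $i \in [s]$ forces $P_i(a) = P_i(b)$ for all $i \in [s]$ as well.

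Next, I would apply Newton's identity at level $k = s+1$, which gives
\[
P_{s+1} \;=\; (-1)^{s}(s+1)E_{s+1} \;-\; \sum_{i=1}^{s} (-1)^{i-1} E_{s+1-i} P_i.
\]
Evaluating at $a$ and at $b$ and subtracting, the entire sum on the right cancels by the observations of the previous paragraph, leaving
\[
P_{s+1}(a) - P_{s+1}(b) \;=\; (-1)^{s}(s+1)\bigl(E_{s+1}(a) - E_{s+1}(b)\bigr).
\]
Since $s+1 > 0$, this identity shows that $P_{s+1}(a) > P_{s+1}(b)$ is equivalent to $(-1)^{s}E_{s+1}(a) > (-1)^{s}E_{s+1}(b)$, which is in turn equivalent to $(-1)^{s+1}E_{s+1}(a) < (-1)^{s+1}E_{s+1}(b)$, as required. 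There is no real obstacle here: the only thing to double-check is the sign bookkeeping in Newton's identities, which is the reason the parity factor $(-1)^{s+1}$ appears on the $E_{s+1}$ side of the equivalence.
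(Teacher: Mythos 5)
Your proof is correct and follows exactly the approach the paper indicates: the paper's proof is a one-line reference to Newton's identities (citing Proposition~9 of Meguerditchian), which is precisely what you carry out in detail. One small bookkeeping slip: when you solve Newton's identity for $P_k$, the sum on the right should carry an overall factor of $(-1)^{k-1}$ as well, i.e.\ $P_k = (-1)^{k-1}kE_k - (-1)^{k-1}\sum_{i=1}^{k-1}(-1)^{i-1}E_{k-i}P_i$; this does not affect your argument, since those summands cancel when you take the difference between the evaluations at $a$ and $b$, and your final identity $P_{s+1}(a)-P_{s+1}(b) = (-1)^{s}(s+1)\bigl(E_{s+1}(a)-E_{s+1}(b)\bigr)$ is correct.
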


\begin{proof}
    This is straightforward to show using Newtons identities, see for instance the proof of Proposition 9 in \cite{meguerditchian1992theorem}.
\end{proof}

To prove the second part of Theorem \ref{2:thm:minmax} we will first consider the generic case and do an induction in the partial order. At the end of the section we will extend the statement to the general case.

Note that $\V_s^\mu(F)= \{ x\in\mathbb{R}^l ~|~ \prod_{i=1}^l (T-x_i)^{\mu_i}\in \H_s(F) \}$, so $\V_s^\mu(F)$ corresponds to the polynomials in $\H_s(F)$ whose composition is either $\mu$ or some permutation of $\mu$. In particular, we see that $\V_s^\mu(F)\cap \W_l$ corresponds to the polynomials in $\H_s^\mu(F)$. So by Lemma \ref{2:lem:powersumiffelem}, finding the minimal polynomial in $\H_s^\mu(F)$ corresponds to maximizing $P_{s+1}^\mu(x)$ over the set $\V_s^\mu(F)\cap \W_l.$
However as we will be using Lagrange multipliers we will need to maximize $P_{s+1}^\mu(x)$ over $\V_s^\mu(F)$ instead and make some adjustments.

Before we begin note that by using Newtons inequalities we can rewrite $\V_s^\mu(F)$ as $$\V_s^\mu(F) = \{x\in\mathbb{R}^l~|~P_i^\mu(x) = c_i \ \forall \ i\in [s]\},$$ for some $c_1,\dots,c_s\in \R$. So let $x_1\leq \dots \leq x_{s+1}$ be the roots of a polynomial $H$ in the relative boundary of a one-dimensional stratum $\H_s^\mu(F)$. The determinant of the Jacobian of $(P_1^\mu(x),\dots,P_{s+1}^\mu(x))$ is $c\prod_{i<m}(x_{i}-x_{m})$, for some nonzero constant $c$. Since $x=(x_1,\dots,x_{s+1})$ has $s$ distinct coordinates the determinant vanishes and the vectors $\nabla P_1^\mu(x),\dots,\nabla P_{s+1}^\mu(x)$ are linearly dependant.

Similarly, the determinant of the upper $s\times s$ submatrix of the Jacobian of $(P_1^\mu(x),\dots,P_s^\mu(x))$ does not vanish, so the vectors $\nabla P_1^\mu(x),\dots,\nabla P_s^\mu(x)$ are linearly independant. Thus there are scalars $a_1,\dots,a_s$ such that $\nabla L=0$, where
$$L=P_{s+1}^\mu(x)- \sum_{i=1}^sa_iP_i^\mu(x),$$
and so we say that the pair $(a,x)$, with $a=(a_1,\dots,a_s)$ is a \emph{constrained critical point} (see page 287 in \cite{colley2011vector}).

The gradient of $L$ is $$\nabla L=\nabla P_{s+1}^\mu(x)- \sum_{i=1}^sa_i\nabla P_i^\mu(x) = (Q_1,\dots,Q_l),$$
where $Q_i=\mu_iQ(x_i)=\mu_i((s+1)x_i^s-\sum_{j=1}^sa_jjx_i^{j-1})$. The univariate polynomial $Q(T) = (s+1)T^s-\sum_{j=1}^sa_jjT^{j-1}$ is of degree $s$ and since $Q$ vanishes at $x_i$ for any $i$, then $Q$ have $s$ distinct roots. Also, we see that a point with more than $s$ distinct coordinates cannot be a constrained critical point.

We will look at the Hessian of $L$ to analyse what kind of critical point we have. The Hessian of $L$ is the matrix
$$ HL(x)=\begin{pmatrix}
0 & \cdots & 0 &  m_{1,1} & & \cdots & & m_{1,s+1}\\
\vdots & \ddots & \vdots & \vdots & & \ddots & & \vdots\\
0 & \cdots & 0 &  m_{s,1} & & \cdots & & m_{s,s+1}\\
m_{1,1} & \cdots & m_{s,1} & \mu_1Q'(x_1) & 0 & \cdots & & 0\\
 & & & 0 &  & \ddots &  &  \\
\vdots & \ddots & \vdots & \vdots & \ddots & \ddots & & \vdots\\
 & & & & & &  & 0\\
m_{1,s+1} & \cdots & m_{s,s+1} & 0 &  & \cdots & 0 & \mu_{s+1}Q'(x_{s+1})
\end{pmatrix},
$$
where $m_{i,j} = -i\mu_jx_j^{i-1}$ for $i\in [s]$ and $j\in [s+1]$.

\begin{lemma}\label{lem:princminor}
The determinant of $HL(x)$ is 
$$(-1)^s\sum_{j=1}^{s+1}\bigg{(}b_jQ'(x_j)\prod_{\substack{i,y\in [s+1]\backslash \{j\}:\\ i<y}}(x_{i}-x_{y})^2\bigg{)}$$ for some positive $b_1,\dots,b_{s+1}$.
\end{lemma}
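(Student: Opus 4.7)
The plan is to exploit the block structure of $HL(x)$. Writing it as
$$HL(x) = \begin{pmatrix} 0_{s\times s} & B \\ B^T & D \end{pmatrix},$$
where $B$ is the $s \times (s+1)$ matrix with entries $m_{i,j} = -i\mu_j x_j^{i-1}$ and $D = \operatorname{diag}(\mu_1 Q'(x_1), \dots, \mu_{s+1} Q'(x_{s+1}))$, I would invoke the Schur complement formula. The $s$ distinct values among $x_1, \ldots, x_{s+1}$ are precisely the $s$ roots of the degree-$s$ polynomial $Q$, hence are simple roots, so every $Q'(x_j)$ is nonzero and $D$ is invertible. This yields
$$\det(HL(x)) = \det(D)\cdot \det(-B D^{-1} B^T) = (-1)^s \det(D) \det(B D^{-1} B^T).$$

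Next, I would factor $i$ out of row $i$ and $k$ out of column $k$ of the $s \times s$ matrix $B D^{-1} B^T$, reducing it to $(s!)^2 \det(V W V^T)$, where $V$ is the $s \times (s+1)$ Vandermonde-type matrix $V_{i,j} = x_j^{i-1}$ and $W = \operatorname{diag}(\mu_j / Q'(x_j))$. The Cauchy--Binet formula then gives
$$\det(V W V^T) = \sum_{j=1}^{s+1} \det(V_{\hat j})^2 \prod_{m \neq j} \frac{\mu_m}{Q'(x_m)},$$
where $V_{\hat j}$ is the $s \times s$ submatrix obtained by deleting column $j$. The standard Vandermonde determinant identity gives $\det(V_{\hat j})^2 = \prod_{i<y;\, i,y \neq j}(x_i - x_y)^2$, exactly the product appearing in the target formula. (If some $x_i = x_y$ with $i,y \neq j$, both the Vandermonde square and the target product vanish, so the equality persists at those boundary configurations.)

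Putting the pieces together, I would simplify the product $\det(D) \cdot \prod_{m \neq j}\mu_m / Q'(x_m)$; the $Q'(x_m)$ factors telescope and one is left with $Q'(x_j) \cdot \mu_j^{-1}\prod_m \mu_m^2$. This gives the claimed formula with
$$b_j \;=\; (s!)^2 \, \mu_j^{-1} \prod_{m=1}^{s+1} \mu_m^2,$$
which is manifestly positive since the $\mu_m$ are positive integers. The only real subtleties are tracking the sign coming out of the Schur complement (the factor $(-1)^s$ from $\det(-B D^{-1} B^T)$, which produces exactly the sign in the statement) and handling the degenerate boundary case $x_i = x_y$ when applying Cauchy--Binet, both of which are routine once set up this way.
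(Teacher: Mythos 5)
Your proof is correct, and it takes a genuinely different route from the paper's. The paper computes $\det(HL(x))$ by brute-force Leibniz expansion over $\Sym(2s+1)$, identifying which permutations contribute (those fixing exactly one of the last $s+1$ indices), peeling off signs, and then recognizing the two remaining sums as weighted Vandermonde determinants. You instead exploit the block structure via the Schur complement $\det(HL(x)) = \det(D)\det(-BD^{-1}B^T)$ and then apply Cauchy--Binet to $\det(VWV^T)$, which gives the same index-$j$ expansion much more transparently. Your approach trades a small extra hypothesis (invertibility of $D$, i.e.\ $Q'(x_j)\neq 0$ for all $j$) for a substantially cleaner calculation; since the $x_j$ are roots of the squarefree degree-$s$ polynomial $Q$ in the application, the hypothesis is automatic, and in any case the identity extends to the degenerate locus by continuity of both sides. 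One more point in your favour: your explicit constant $b_j = (s!)^2\,\mu_j\prod_{m\neq j}\mu_m^2$ checks out against the $s=1$ case, whereas the paper's stated $b_j=\mu_j\bigl(\prod_{i\neq j}i\mu_i\bigr)^2$ appears to carry a small bookkeeping slip in the integer factor ($\prod_{i\in[s+1]\setminus\{j\}}i$ rather than $s!=\prod_{i\in[s]}i$); both are positive, which is all the downstream argument uses, but your version is the more precise one.
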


\begin{proof}
    Let $Y = (y_{i,j})=HL(x)$, then the Leibniz formula for the determinant of $Y$ is $$\det(Y) = \sum_{\sigma \in \Sym(2s+1)}\bigg{(}\sign(\sigma)\prod_{i=1}^{2s+1}y_{i,\sigma(i)}\bigg{)}.$$
    If $\sign(\sigma)\prod_{i=1}^{2s+1}y_{i,\sigma(i)}$ is nonzero, then $$\prod_{i=1}^{2s+1}y_{i,\sigma(j)}= m_{1,r_1}\cdots m_{s,r_s}\cdot \mu_jQ'(X_j) \cdot m_{1,k_1}\cdots m_{s,k_s},$$ where $\{r_1,..,r_s,j\}=\{k_1,..,k_s,j\}=[s+1]$.
    
    Thus $\sigma(s+j)=s+j$ for some $j\in [s+1]$ and then $\sigma([s])=\{s+1,\dots,2s+1\}\backslash \{s+j\}$ and $\sigma(\{s+1,\dots,2s+1\}\backslash \{s+j\})=[s]$. Therefore we have 
    $$\sigma = \prod_{i\in [s]}(i,k_i)\prod_{i\in [s]}(i,s+r_i) = \prod_{i\in [s]}(i,k_i)\prod_{i\in [s]}(r_i,s+r_i)\prod_{i\in [s]}(i,r_i),$$
    where $(i,m)$ denotes a transposition, and so
    $$\sign(\sigma)=(-1)^s\sign\bigg{(}\prod_{i\in [s]}(i,k_i)\bigg{)}\sign\bigg{(}\prod_{i\in[s]}(i,r_i)\bigg{)}.$$
    
    Thus the determinant of $HL(x)$ is $$(-1)^s\sum_{j=1}^{s+1}\bigg{(}\mu_jQ'(x_j)\sum_{\substack{(r_1,\dots,r_s):\\ \{r_1,..,r_s,j\}=[s+1]}}\sign\bigg{(}\prod_{i\in [s]}(i,r_i)\bigg{)}\prod_{i\in [s]} m_{i,r_i}\cdot $$
    $$ \sum_{\substack{(k_1,\dots,k_s):\\ \{k_1,..,k_s,j\}=[s+1]}}\sign\bigg{(}\prod_{l\in [s]}(l,k_l)\bigg{)}\prod_{l\in [s]} l_{l,k_l}\bigg{)}.$$

    Both of the last two sums are equal to the determinant of the $s\times s$ submatrix $(m_{i,y})_{i\in [s], y\in[s+1]\backslash \{j\}}$. This is a Vandermonde matrix with weighted columns and rows thus its determinant is $$-\bigg{(}\prod_{i\in [s+1]\backslash \{j\}}i\mu_{i}\bigg{)}\bigg{(}\prod_{\substack{i,y\in [s+1]\backslash \{j\}:\\ i<y}}(x_{i}-x_{y})\bigg{)}.$$
    So if we let $b_j=\mu_j(\prod_{i\in [s+1]\backslash \{j\}}i\mu_{i})^2$, the statement follows.
\end{proof}

\begin{proposition}\label{2:prop:indstart}
Let $H_s^u(f)$ be generic and one-dimensional. Then $H\in\H_s^\mu(F)$ is the minimal (resp. maximal) polynomial if and only if $\ell(c(H)) =s$ and $c(H)/\mu$ is alternate odd (resp. even).
\end{proposition}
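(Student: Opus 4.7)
The plan combines the interval structure of $\H_s^\mu(F)$, the Lagrange multiplier set-up preceding Lemma \ref{lem:princminor}, and the determinant formula there, all translated through the Vieta map. Since $\H_s^\mu(F)$ is compact (Lemma \ref{2:lem:compcontr}), one-dimensional and contractible, it is a closed interval whose two endpoints are precisely the minimal and maximal polynomials. By Lemma \ref{2:lem:sdistinct} together with genericity, each endpoint $H$ has exactly $s$ distinct roots, so $\ell(c(H))=s$; consequently $c(H)/\mu$ is a composition of $s+1$ into $s$ parts of the form $(1,\dots,1,2,1,\dots,1)$ with the $2$ in some position $k\in [s]$. What remains is to identify the parity of $k$ for the minimal and the maximal polynomial.

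By Lemma \ref{2:lem:powersumiffelem} the minimal polynomial corresponds via Vieta to a maximiser of $P_{s+1}^\mu$ on $\V_s^\mu(F)\cap \W_{s+1}$ and the maximal polynomial to a minimiser, with the preimage $x\in \R^{s+1}$ having exactly $s$ distinct coordinates, $x_k=x_{k+1}$, and lying on the wall of $\W_{s+1}$. As noted in the paragraph before Lemma \ref{lem:princminor}, the gradients $\nabla P_1^\mu(x),\dots,\nabla P_s^\mu(x)$ are linearly independent, so by the implicit function theorem $\V_s^\mu(F)$ is a smooth $1$-manifold near $x$ and $x$ is a constrained critical point of $P_{s+1}^\mu$. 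The bordered Hessian test then identifies $x$ as a non-degenerate local extremum on this $1$-manifold, and such an extremum restricts to an extremum on the half $\V_s^\mu(F)\cap \W_{s+1}$; consequently the sign of $\det HL(x)$ decides whether $H$ is the minimal or the maximal polynomial of $\H_s^\mu(F)$.

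To evaluate that sign I would specialize the formula of Lemma \ref{lem:princminor} to our $x$. The identity $x_k=x_{k+1}$ kills every summand with $j\notin\{k,k+1\}$, and the surviving two terms coincide after swapping $x_k$ and $x_{k+1}$, yielding
\[
\det HL(x) = (-1)^s(b_k+b_{k+1})\,Q'(x_k)\prod_{\substack{i,y\in[s+1]\setminus\{k\}\\ i<y}}(x_i-x_y)^2.
\]
The product is strictly positive and $Q$ has degree $s$ and positive leading coefficient $s+1$, with simple real roots equal to the distinct values of $x$ and $x_k$ occurring as the $k$-th smallest; hence $\sign(Q'(x_k))=(-1)^{s-k}$ and therefore $\sign(\det HL(x))=(-1)^k$. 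The standard sign rule for a bordered Hessian with $s$ equality constraints in $s+1$ variables says that the determinant has sign $(-1)^{s+1}$ at a local maximum and $(-1)^s$ at a local minimum of $P_{s+1}^\mu$, so we conclude that $H$ is minimal iff $k$ and $s$ have opposite parities and maximal iff they have equal parity. This matches the definition, since $(1,\dots,1,2,1,\dots,1)$ of length $s$ with the $2$ in position $k$ is alternate odd iff $k\not\equiv s\pmod 2$ and alternate even iff $k\equiv s\pmod 2$.

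The step I expect to be the main obstacle is the bordered-Hessian analysis itself: one needs to verify that $\V_s^\mu(F)$ is genuinely smooth at the boundary point so that the second-order test is legitimate, and then carry out the sign bookkeeping in Lemma \ref{lem:princminor} while carefully tracking the dependence of $\sign(Q'(x_k))$ on the position $k$. Once this is in place, everything else reduces to translating between the Weyl-chamber and Vandermonde pictures via the Vieta map and reading off the alternate-parity definitions.
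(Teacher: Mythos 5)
Your proposal follows essentially the same route as the paper's own proof: reduce to optimizing $P_{s+1}^\mu$ on $\V_s^\mu(F)$ via the Vieta map and Lemma~\ref{2:lem:powersumiffelem}, apply the bordered-Hessian criterion at the boundary critical point, simplify the determinant of Lemma~\ref{lem:princminor} using $x_k=x_{k+1}$, and determine the sign from the interlacing of $Q$ and $Q'$. Your sign bookkeeping ($\sign(Q'(x_k))=(-1)^{s-k}$, minimal iff $k\not\equiv s\pmod 2$ iff alternate odd) is carried out carefully and lands on exactly the statement of Theorem~\ref{2:thm:minmax}.
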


\begin{proof}
Let $H=\prod_{i=1}^{s+1}(T-x_i)^{\mu_i}$ be in the relative boundary of $\H_s^\mu(F)$. Then $H$ has $s$ distinct roots and is either the maximal or the minimal polynomial. The Hessian criterion from \cite{colley2011vector} (chapter 4, page 288) says that $x$ is a local minimum (resp. maximum) of $P_{s+1}(x_u)$ on $\V_s^\mu(F)$ if and only if $(-1)^s\det(HL(x))$ is positive (resp. negative). So by Lemma \ref{2:lem:powersumiffelem}, $x$ is a local minimum (resp. maximum) of $(-1)^{s+1}E_{s+1}(x_u)$ on $\V_s^\mu(F)$ if and only if $(-1)^s\det(HL(x))$ is negative (resp. positive).

By Lemma \ref{lem:princminor},
$$(-1)^s\det(HL(x))=\sum_{j=1}^{s+1}\bigg{(}b_jQ'(x_j)\bigg{(}\prod_{\substack{i,y\in [s+1]\backslash \{j\}:\\ i<y}}(x_{i}-x_{y})\bigg{)}^2\bigg{)},$$
where the Vandermonde determinant is zero whenever two coordinates of $x$ with indices in $[s+1]\backslash \{j\}$ are equal. That is, the only nonzero terms are when $x_j$ is the repeated coordinate of $x$. There are two such indices so let them be $k$ and $k+1$. Then we have 
$$(-1)^s\det(HL(x))=b_kQ'(x_k)\prod_{\substack{i,y\in [s+1]\backslash \{k\}:\\ i<y}}(x_{i}-x_{y})^2+$$
$$b_{k+1}Q'(x_{k+1})\prod_{\substack{i,y\in [s+1]\backslash \{k+1\}:\\ i<y}}(x_{i}-x_{y})^2.$$

Since $Q'(x_k)$ and $Q'(x_{k+1})$ have the same sign and all the other factors are positive we have that $\sign((-1)^s\det(HL(x))) = \sign(Q'(x_k)).$ Since $Q$ has no repeated roots, $Q'(x_k)\neq 0$. As the roots of $Q'$ interlace the roots of $Q$ and the leading coefficient of $Q$ is positive, the sign of $Q'(x_k)$ is negative if and only if $x_k$ is the second largest, fourth largest\dots, or $(2j)^{th}$ largest coordinate of $x$. That is, if and only if $c(H)/\mu$ is alternate even. Similarly, the sign of $Q'(x_k)$ is positive if and only if $x_k$ is the largest, third largest,\dots, or $(2j+1)^{th}$ largest coordinate of $x$. That is, if and only if $c(H)/\mu$ is alternate odd.

Note also that for a sufficiently small $\epsilon>0$, the ball $B_\epsilon(x)$ meets the two open sets $\{y\in\R^{s+1}~|~y_1<y_2<\dots<y_{s+1}\}$ and $\{y\in\R^{s+1}~|~y_1<\dots<y_{k-1}<y_{k+1}<y_k<y_{k+2}<\dots<y_{s+1}\}$ but no other set of that form. Thus $H$ is either the locally minimal polynomial or the locally maximal polynomial for both the strata $\H_s^\mu(F)$ and $\H_s^\nu(F)$, where $\nu=(\mu_1,\dots,\mu_{k-1},\mu_{k+1},\mu_k,\mu_{k+2},\dots,\mu_{s+1})$.

In particular, the alternate odd- or evenness of $c(H)/\mu$ along with the length restriction $\ell(c(H))=s$, are sufficient and necessary conditions for $H$ being locally minimal or locally maximal in $\H_s^\mu(F)$ and by Lemma \ref{2:lem:loceq} this is equivalent to $H$ being minimal or maximal in $\H_s^\mu(F)$.
\end{proof}

Having settled the initial step of our induction, we need to establish some tools for the inductive step. Firstly we need something on the combinatorial side and we start by rephrasing Definition \ref{2:def:mod}: so if $\lambda \leq \mu$ and $r=\ell(\lambda)$, then there is an increasing sequence of integers $n_0,\dots,n_r$, with $n_0=0$ and $n_r=l=\ell(\mu)$, such that $\lambda_i=\sum_{j<n_{i-1}}^{n_i}\mu_j$ for all $i\in [r]$. Then the composition $\lambda/\mu$ is the composition of $l$ whose parts are $(\lambda/\mu)_i=n_i-n_{i-1}$.
\begin{lemma}\label{2:lem:smalleriff}
Let $\lambda,\gamma<\mu$ be compositions of $d$, then we have $\lambda/\mu<\gamma/\mu$ if and only if $\lambda<\gamma$ and in this case we have that $\lambda/\gamma=\frac{\lambda/\mu}{\gamma/\mu}$. 
\end{lemma}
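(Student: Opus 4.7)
The plan is to encode the relation $\rho\leq\mu$ between compositions as an inclusion of subsets of $[\ell(\mu)]$, using the partial sums of $\mu$. Write $\mu=(\mu_1,\dots,\mu_l)$ and set $N_k:=\mu_1+\dots+\mu_k$ for $k\in\{0,\dots,l\}$. For any $\rho\leq\mu$, define
\[I(\rho):=\{k\in[l]\mid N_k\text{ is a partial sum of }\rho\},\]
so that $l\in I(\rho)$. The composition $\rho$ is recovered from $I(\rho)$ by taking successive differences of $\{0\}\cup\{N_k\mid k\in I(\rho)\}$, and by the very definition of $\rho/\mu$, the partial sums of $\rho/\mu$ enumerate $I(\rho)$ in increasing order. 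In particular, $\rho\mapsto I(\rho)$ is an order-preserving bijection between $\{\rho : \rho\leq\mu\}$ and subsets of $[l]$ containing $l$.

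The first step is to show $\lambda\leq\gamma$ iff $I(\lambda)\subseteq I(\gamma)$. By definition, $\lambda\leq\gamma$ means $\lambda$ is obtained by summing consecutive parts of $\gamma$, which is equivalent to saying that every partial sum of $\lambda$ is also a partial sum of $\gamma$. Since both lie below $\mu$, this translates verbatim into $I(\lambda)\subseteq I(\gamma)$. Applying the same principle at the next level (with $\mu/\mu=(1,\dots,1)$ playing the role of $\mu$), $\lambda/\mu\leq\gamma/\mu$ iff the partial sums of $\lambda/\mu$ are a subset of those of $\gamma/\mu$, and these are precisely $I(\lambda)$ and $I(\gamma)$, respectively. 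Hence
\[\lambda\leq\gamma\iff I(\lambda)\subseteq I(\gamma)\iff \lambda/\mu\leq\gamma/\mu,\]
and the strict versions follow from the fact that $\lambda\mapsto I(\lambda)$ is injective.

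For the identity $\lambda/\gamma=(\lambda/\mu)/(\gamma/\mu)$, assume $\lambda\leq\gamma\leq\mu$ and write $I(\gamma)=\{k_1<\dots<k_r\}$ with $I(\lambda)=\{k_{j_1}<\dots<k_{j_s}\}\subseteq I(\gamma)$ and $j_0:=0$. Then directly from the definition, $\lambda/\gamma$ is the composition of $r$ whose $i$-th part is $j_i-j_{i-1}$, since $\lambda_i$ equals $\gamma_{j_{i-1}+1}+\dots+\gamma_{j_i}$. On the other hand, $(\lambda/\mu)/(\gamma/\mu)$ is the composition of $\ell(\gamma/\mu)=r$ recording the positions of the partial sums of $\lambda/\mu$ among those of $\gamma/\mu$; since the former enumerate the $k_{j_a}$ and the latter enumerate the $k_a$, this composition is also $(j_i-j_{i-1})_{i\in[s]}$, so the two sides coincide.

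The argument is essentially bookkeeping, and there is no serious obstacle beyond keeping track of which ambient composition each partial-sum sequence is being read against; introducing the set $I(\rho)$ once and for all is what makes all three statements immediate.
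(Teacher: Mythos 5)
Your proof is correct and is, at bottom, the same argument as the paper's: both track the positions of the partial sums of $\lambda$ and $\gamma$ inside those of $\mu$ (your $I(\rho)$ is exactly the set $\{n_1,\dots,n_r\}$ of break indices the paper writes out), and both conclusions are read off from the inclusion of these index sets. What you do differently is to package this once and for all as the order isomorphism $\rho\mapsto I(\rho)$ between $\{\rho:\rho\leq\mu\}$ and subsets of $[\ell(\mu)]$ containing $\ell(\mu)$ — the same map $\phi$ the paper introduces independently in the proof of Lemma \ref{3:lem:sc} — so that $\lambda\leq\gamma$, $\lambda/\mu\leq\gamma/\mu$, and $\lambda/\gamma=(\lambda/\mu)/(\gamma/\mu)$ all become transparent statements about one chain of subsets $I(\lambda)\subseteq I(\gamma)\subseteq[l]$; the paper instead verifies these by direct manipulation of nested sums and three families of index sequences ($n_i$, $m_i$, $z_i$), which obscures that a single bijection is doing all the work. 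The only thing to make sure you state explicitly is that the $N_k$ are distinct (because $\mu_i>0$), which is what lets you pass from containment of partial-sum sets to containment of index sets.
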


\begin{proof}
We continue with the notation above and similarly as for $\lambda$ we have that if $\gamma$ is of length $k$, then there is an increasing sequence of integers $m_0<\dots<m_k$ with $m_0 = 0$ and $m_k=l$ such that $\gamma_i=\sum_{j>m_{i-1}}^{m_i}\mu_j, \ \forall \ i \in [k]$. So $\lambda/\mu = (n_1-n_0,\dots,n_r-n_{r-1})$ and $\gamma/\mu = (m_1-m_0,\dots,m_k-m_{k-1})$ are two compositions of $l$.

If $\lambda<\gamma$, there is an increasing sequence of integers $z_0<\dots<z_r$ with $z_0 = 0$ and $z_r=k$ such that $\lambda_i=\sum_{j>z_{i-1}}^{z_i}\gamma_j \ \forall \ i \in [r]$.  Thus $$\sum_{j>n_{i-1}}^{n_i}\mu_j=\lambda_i=\sum_{j>z_{i-1}}^{z_i}\bigg{(}\sum_{y>m_{j-1}}^{m_j}\mu_y\bigg{)} = \sum_{y>m_{z_{i-1}}}^{m_{z_i}}\mu_y, \ \forall \ i \in [r],$$
and since $m_0=n_0$ we have $m_{z_i} = n_i$ and $m_{z_{i-1}} = n_{i-1}$. Thus $$(\lambda/\mu)_i=n_i-n_{i-1}= m_{z_i}-m_{z_{i-1}} =$$
$$ m_{z_i}-m_{z_{i}-1}+m_{z_i-1}-m_{z_{i}-2}+\dots+m_{z_{i-1}+1}-m_{z_{i-1}} = \sum_{j>z_{i-1}}^{z_i}(\gamma/\mu)_j$$ and so $\lambda/\mu<\gamma/\mu$.

Conversely, if $\lambda/\mu<\gamma/\mu$, then there is an increasing sequence of integers $y_0<\dots<y_r$ with $y_0 = 0$ and $y_r=k$ such that $$(\lambda/\mu)_i=\sum_{j>y_{i-1}}^{y_i}(\gamma/\mu)_j, \ \forall \ i \in [r].$$ Thus we have $$n_i-n_{i-1} = (\lambda/\mu)_i=\sum_{j>y_{i-1}}^{y_i}(m_j-m_{j-1}) =m_{y_i}-m_{y_{i-1}},$$
and since $n_0=0=m_0=m_{z_0}$, we have $n_i=m_{y_i} \ \forall \ i \in [r]$. Thus $$\lambda_i=\sum_{j>n_{i-1}}^{n_i}\mu_j=\sum_{j>m_{y_{i-1}}}^{m_{y_i}}\mu_j=\sum_{j>y_{i-1}}^{y_i}\gamma_j \ \forall \ i \in [r]$$ and so $\lambda<\gamma.$

Lastly, since $n_i=m_{y_i}$ and $n_i=m_{z_i}$, we have $m_{y_i}=m_{z_i}$. Since the indices $m_0,\dots,m_k$ are distinct we have $y_i=z_i$. Thus $$\bigg{(}\frac{\lambda/\mu}{\gamma/\mu}\bigg{)}_i = y_i-y_{i-1} = z_i-z_{i-1} = (\lambda/\gamma)_i \ \forall \ i\in[r]$$ and so we have $\lambda/\gamma=\frac{\lambda/\mu}{\gamma/\mu}$.
\end{proof}

Next we need to look closer at the projection introduced in the beginning of the appendix. It should be noted that the following discussion and lemma is analogous to the approach in \cite{kostov1989geometric}, where the image of the power sums are studied instead of the elementary symmetric polynomials.

By Lemma \ref{2:lem:projhom}, $\H_s^\mu(F)$ is homeomorphic to $P^{n-l}(\H_s^\mu(F))\subset \R^{l-s}$ and thus by Proposition \ref{2:prop:pointormaxd}, $M:=P^{n-l}(\H_s^\mu(F))$ is full-dimensional when $\H_s^\mu(F)$ is neither empty nor a single polynomial. Let $\pi:M\to \R^{l-s-1}$ be the projection given by $(x_1,\dots,x_{l-s})\mapsto (x_1,\dots,x_{l-s-1})$, then for $H\in\H_s^\mu(F)$, the fibre $\pi^{-1}(\pi(P^{n-l}(H)))$ equals $P^{n-l}(\H_{l-1}^\mu(H))$. This fibre is by Proposition \ref{2:prop:pointormaxd}, either the point $P^{n-l}(H)$, in which case it must lie on the boundary of $M$, or it is an interval. And if it is an interval, then its endpoints must lie on the boundary of $M$ and its relative interior lies in the interior of $M$.

Thus the boundary of $M$ can be written as the union of a "lower" and an "upper" part, $L\cup U$, where $$L=\{(x_1,\dots,x_{l-s})\in M~|~x_{l-s}\leq y_{l-s} \ \forall \ (y_1,\dots,y_{l-s})\in \pi^{-1}(\pi(x))\},$$
and $$U=\{(x_1,\dots,x_{l-s})\in M~|~x_{l-s}\geq y_{l-s} \ \forall \ (y_1,\dots,y_{l-s})\in \pi^{-1}(\pi(x))\}.$$

\begin{lemma}\label{2:lem:contupperlower}
    The sets $L$ and $U$ are closed.
\end{lemma}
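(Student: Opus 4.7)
The plan is to establish a ``lower hemicontinuity'' property of the projection $\pi$ and then deduce closedness of $L$ and $U$ from it. Specifically, I would first prove that whenever $a^{(k)} \to a$ in $M$ and $b \in \pi^{-1}(\pi(a)) \cap M$, there exist $b^{(k)} \in \pi^{-1}(\pi(a^{(k)})) \cap M$ with $b^{(k)} \to b$. Granting this, closedness of $L$ is straightforward: given $a^{(k)} \in L$ converging to some $a \in M$, let $b$ be a minimum of the fiber $\pi^{-1}(\pi(a)) \cap M$ (it exists by compactness of $M$, Lemma \ref{2:lem:compcontr}). The property yields $b^{(k)} \to b$ with $b^{(k)}$ in the fiber over $\pi(a^{(k)})$, and the defining condition of $L$ gives $a^{(k)}_{l-s} \leq b^{(k)}_{l-s}$. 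Passing to the limit produces $a_{l-s} \leq b_{l-s}$, so $a$ also achieves the fiber-minimum and lies in $L$. The statement for $U$ would follow by the symmetric argument with fiber-maxima.

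For the hemicontinuity property itself, I would rely on Proposition \ref{2:prop:pointormaxd}. Since $\H_s^\mu(F)$ is $(l-s)$-dimensional, $M$ is full-dimensional in $\R^{l-s}$, so via the homeomorphism $P^{n-l}$ the topological interior of $M$ corresponds exactly to the polynomials in $\H_s^\mu(F)$ of composition $\mu$, and $M$ equals the closure of this interior. The proof then splits into two cases. First, if $b$ lies in the interior of $M$, fix a ball $B_\delta(b) \subset M$; for $k$ large enough that $\|\pi(a^{(k)}) - \pi(a)\| < \delta$, the point $(\pi(a^{(k)}), b_{l-s})$ lies in $B_\delta(b) \subset M$ and projects to $\pi(a^{(k)})$, which furnishes the required $b^{(k)}$. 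Second, for general $b$ I would approximate $b$ by interior points $b^{(j)} \to b$ (possible by item (3) of Proposition \ref{2:prop:pointormaxd}), apply the interior case to each $b^{(j)}$ to obtain sequences $b^{(k)}_j \in \pi^{-1}(\pi(a^{(k)})) \cap M$ with $b^{(k)}_j \to b^{(j)}$ as $k \to \infty$, and extract a diagonal $b^{(k)} := b^{(k)}_{j(k)}$ with $j(k) \to \infty$ slowly enough that the fiber condition persists and $b^{(k)} \to b$.

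I expect the main obstacle to be the boundary case in the hemicontinuity lemma: when $b$ lies on $\partial M$ there is no ball around $b$ contained in $M$, so one cannot produce fiber approximants directly near $b$. The diagonal argument sidesteps this by reducing to the interior case through the density of interior points, which is precisely what item (3) of Proposition \ref{2:prop:pointormaxd} supplies. The only delicate bookkeeping is choosing the diagonal indices $j(k)$ slowly enough so that for each $k$ the interior approximant $b^{(j(k))}$ is already close to $b$ and the corresponding fiber approximation $b^{(k)}_{j(k)}$ is already available; this is a routine extraction from a countable family of convergent sequences.
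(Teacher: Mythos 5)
Your approach is genuinely different from the paper's. The paper argues directly by contradiction: assume $Q \in \overline{U} \setminus U$; since the boundary of $M$ is closed, $Q\in L$, so its fiber is a nontrivial interval with relative interior in the interior of $M$. Picking a ball $B_\epsilon(G)\subset M$ around such a relative interior point, the paper shows any fiber over a point close to $\pi(Q)$ reaches up to height $\approx G_{l-s}$, so the corresponding $U$-point stays at distance $\geq \epsilon/2$ from $Q$ --- a contradiction. Your route instead factors the argument through a fiber-hemicontinuity lemma plus a limit argument. That modularization is a legitimate alternative, but as written your Case~2 has a gap.

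The problem is the choice of approximating sequence in the boundary case. You take arbitrary interior points $b^{(j)}\to b$ of $M$ (citing item (3) of Proposition~\ref{2:prop:pointormaxd}) and then ``apply the interior case to each $b^{(j)}$''. But Case~1 produces, for a given interior point $c$, the approximants $(\pi(a^{(k)}), c_{l-s})$, and their limit as $k\to\infty$ is $(\pi(a), c_{l-s})$, which equals $c$ only when $\pi(c)=\pi(a)$. An arbitrary interior $b^{(j)}$ need not satisfy $\pi(b^{(j)})=\pi(a)$, so the sequences $b^{(k)}_j$ you obtain do not converge to $b^{(j)}$. Worse, membership of $(\pi(a^{(k)}), b^{(j)}_{l-s})$ in $M$ requires $\|\pi(a^{(k)})-\pi(b^{(j)})\|<\delta_j$, where $\delta_j=\dist(b^{(j)},\partial M)\to 0$; since $\|\pi(a^{(k)})-\pi(b^{(j)})\|\to\|\pi(a)-\pi(b^{(j)})\|$, this can fail for all $k$ once $j$ is large. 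So the diagonal ``routine extraction'' is not available as claimed.

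The fix is to choose the $b^{(j)}$ inside the fiber over $\pi(a)$: when that fiber is a nontrivial interval, its relative interior lies in the interior of $M$ (this is stated in the discussion preceding the lemma, not in Proposition~\ref{2:prop:pointormaxd}(3)), so one may take $b^{(j)}$ in the relative interior of the fiber with $b^{(j)}\to b$; then $\pi(b^{(j)})=\pi(a)$ and Case~1 applies verbatim to each $b^{(j)}$. When the fiber over $\pi(a)$ is a single point, $b=a$ and one may simply take $b^{(k)}:=a^{(k)}$. With that correction your hemicontinuity lemma and the ensuing limit argument for closedness of $L$ and $U$ go through.
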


\begin{proof}
    We just show that $U$ is closed since the proof for $L$ is analogous. So suppose $P^{n-l}(Q)$ is in the closure of $U$ but not in $U$. By Lemma \ref{2:lem:projhom}, the boundary of $P^{n-l}(\H_s^\mu(F))$ is closed and thus $P^{n-l}(Q)\in L$. Thus $\pi^{-1}(\pi(P^{n-l}(Q)))$ is an interval whose relative interior lies in the interior of $P^{n-l}(\H_s^\mu(F))$. Let $P^{n-l}(G)$ be one of those relative interior points and let $\epsilon>0$ be such that $B_\epsilon(P^{n-l}(G))\subset P^{n-l}(\H_s^\mu(F))$.

    For any $P^{n-l}(H)\in B_\epsilon(P^{n-l}(G))$,  the point $\pi^{-1}(\pi(P^{n-l}(H)))\cap L$ lies below $B_\epsilon(P^{n-l}(G))$. Thus the distance between $P^{n-l}(Q)$ and any point in $U$ is at least as large as $\epsilon/2$. Thus $P^{n-l}(Q)$ cannot be in the closure of $U$ which is a contradiction and so $P^{n-l}(Q)$ must lie in $U$.
\end{proof}

\maxiffmax

\begin{proof}
    One implication is clear, so we just have to show that if for all compositions $\nu$, with $H\in \H_s^\nu(F)\subsetneq \H_s^\mu(F)$, we have that $H$ is minimal in $\H_s^\nu(F)$, then $H$ is minimal in $\H_s^\mu(F)$. We assume $\H_s^\mu(F)$ is $(l-s)$-dimensional since the statement is clear when it is just a point. Also, the argument for maximal polynomials is analogous so we just prove it for minimal polynomials.

    Suppose $H$ is not minimal in $\H_s^\mu(F)$, then by Lemma \ref{2:lem:loceq} it is not locally minimal. So for any $i\in\mathbb{N}$, $B_{1/i}(H)\cap \H_s^\mu(F)$ contains a polynomial $G_i$ whose first free coefficient is smaller than the first free coefficient of $H$.

    Without loss of generality assume $P^{n-l}(H)$ lies in the upper part of the boundary of $M$. Then for each fibre $\pi^{-1}(\pi(P^{n-l}(G_i)))$, let $P^{n-l}(Q_i)$ be the point in the upper part of the boundary of $M$. Since the upper part is compact by Lemma \ref{2:lem:compcontr} and Lemma \ref{2:lem:projhom}, $(P^{n-l}(Q_i))$ converges to a point in the upper part which is by design $P^{n-l}(H)$.

    As there are finitely many compositions, there is an infinite subsequence of $(P^{n-l}(Q_i))$, where all the $Q_i$'s have the same composition $\lambda\neq \mu$, that converges to $P^{n-l}(H)$. By Proposition \ref{2:prop:pointormaxd} and Lemma \ref{2:lem:projhom}, the image $P^{n-l}(\H_s^\lambda(F))$ is the closure of its relative interior which consists of the images of the polynomials with composition $\lambda$. Thus $H\in\H_s^\lambda(F)$ and it is by construction not the minimal polynomial. This is a contradiction and so $H$ must be minimal in $\H_s^\mu(F)$.
\end{proof}

\begin{lemma}\label{2:lem:diamond}
Let $l=\ell(\mu)\geq s+2$ and let $H\in\H_s^\mu(F)$ have $s$ distinct roots. Then there are two polynomials with distinct compositions, $\gamma$ and $\nu$, in $\H_s^\mu(F)$ of length $\ell(\mu)-1$ and with $c(H)<\gamma,\nu$.
\end{lemma}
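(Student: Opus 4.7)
The plan is to separate the combinatorial content (existence of the right compositions $\gamma$ and $\nu$) from the geometric content (realizing them as actual compositions of polynomials in $\H_s^\mu(F)$).

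First I would analyze the compositions of length $l-1$ lying at most $\mu$. Any such composition is obtained from $\mu$ by merging a single pair of adjacent parts; call the result of merging $\mu_j$ and $\mu_{j+1}$ the composition $\mu^{(j)}$, for $j\in\{1,\ldots,l-1\}$. These $\mu^{(j)}$ are pairwise distinct: if $j_1<j_2$ and $\mu^{(j_1)}=\mu^{(j_2)}$, then comparing the $j_1$-th entries would force $\mu_{j_1}+\mu_{j_1+1}=\mu_{j_1}$, contradicting $\mu_{j_1+1}\geq 1$. Thus the compositions of length $l-1$ below $\mu$ are in bijection with the set $\{1,\ldots,l-1\}$ of merge positions.

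Next I would identify which $\mu^{(j)}$ satisfy $c(H)\leq \mu^{(j)}$. By hypothesis $\ell(c(H))=s$, so the passage from $\mu$ to $c(H)$ performs exactly $l-s$ merges at some positions $J\subseteq\{1,\ldots,l-1\}$ with $|J|=l-s$. It is straightforward (and follows readily from Lemma \ref{2:lem:smalleriff}) that $c(H)\leq \mu^{(j)}$ if and only if $j\in J$. Since $l\geq s+2$, we have $|J|\geq 2$, so we may choose two distinct indices $j_1,j_2\in J$ and set $\gamma:=\mu^{(j_1)}$ and $\nu:=\mu^{(j_2)}$. Both have length $l-1$ and both are distinct from $c(H)$, which has length $s<l-1$; hence $c(H)<\gamma$ and $c(H)<\nu$, and $\gamma\neq \nu$.

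Finally, to realize $\gamma$ and $\nu$ as actual compositions of polynomials in $\H_s^\mu(F)$, I would invoke Proposition \ref{2:prop:pointormaxd}. Since $H\in\H_s^\gamma(F)$ and $H$ has $s$ distinct roots, the stratum $\H_s^\gamma(F)$ contains a polynomial with at least $s$ distinct roots and is therefore of dimension $\ell(\gamma)-s=l-1-s\geq 1$; its relative interior, which by item (\ref{2:prop:item2}) consists exactly of the polynomials with composition $\gamma$, is nonempty. Pick any polynomial in it and call it $G_\gamma$; since $\gamma\leq\mu$, we have $G_\gamma\in\H_s^\gamma(F)\subseteq \H_s^\mu(F)$. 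Apply the same argument to produce $G_\nu$. This gives the two polynomials required.

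There is no real obstacle here: the statement is essentially a bookkeeping consequence of (a) the combinatorial structure of the partial order on compositions, where the "diamond" between $c(H)$ (of length $s$) and $\mu$ (of length $l\geq s+2$) contains at least two intermediate compositions of length $l-1$, and (b) the fact from Proposition \ref{2:prop:pointormaxd} that once a stratum contains a polynomial with $s$ distinct roots, the composition indexing it is actually attained. The only point requiring care is checking that distinct merge positions yield distinct compositions, which rules out the degenerate case.
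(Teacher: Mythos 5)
Your proof is correct and follows the same approach as the paper's: pick two distinct merge positions $j\neq i$ from among those used to collapse $\mu$ to $c(H)$ (there are $l-s\geq 2$ such positions), define $\gamma=\mu^{(j)}$ and $\nu=\mu^{(i)}$ as the length-$(l-1)$ intermediate compositions, and invoke Proposition \ref{2:prop:pointormaxd} to realize them geometrically. Your version is somewhat more carefully argued than the paper's (you explicitly verify that distinct merge positions give distinct compositions, and you spell out why the realized compositions actually occur in $\H_s^\mu(F)$), but it is the same argument.
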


\begin{proof}
    Let $\lambda=c(H)$, then since $l\geq s+2$, $\ell(\lambda)=s$ and $\lambda<\mu$ one must replace at least two of the commas in $\mu$ with plus signs to obtain $\lambda$. So let $j\neq i$ be two indices such that $$\gamma=(\mu_1,\dots,\mu_{j-1},\mu_j+\mu_{j+1},\mu_{j+2},\dots,\mu_{l})$$ and $$\nu=(\mu_1,\dots,\mu_{i-1},\mu_i+\mu_{i+1},\mu_{i+2},\dots,\mu_{l})$$ are both greater than $\lambda$. By Proposition \ref{2:prop:pointormaxd} both of these compositions must occur in $\H_s^\mu(F)$.
\end{proof}

\begin{proposition}\label{2:prop:generic}
Let $H_s^\mu(F)$ be of $(l-s)$-dimensional and generic. Then $H\in\H_s^\mu(F)$ is the minimal (resp. maximal) polynomial if and only if $\ell(c(H))=s$ and $c(H)/\mu$ is alternate odd (resp. even).
\end{proposition}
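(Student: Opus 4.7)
My plan is to proceed by induction on the dimension $m := l - s$ of $\H_s^\mu(F)$, where $l = \ell(\mu)$. The base case $m = 1$ will be precisely Proposition \ref{2:prop:indstart}; the degenerate case $m = 0$ is trivial since then $\H_s^\mu(F)$ is a single point, $c(H) = \mu$, and $c(H)/\mu = (1,\dots,1)$ is vacuously both alternate odd and alternate even. I will only write out the minimal case, the maximal case being entirely analogous.

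For the inductive step I assume $m \geq 2$. The first observation is that if $H$ is minimal then Lemma \ref{2:lem:sdistinct} forces $\ell(c(H)) \leq s$, while genericity of $\H_s^\mu(F)$ forces $\ell(c(H)) \geq s$, so $\ell(c(H)) = s$. I therefore fix $H$ with $\lambda := c(H)$ of length $s$. By Lemma \ref{2:lem:maxiffmax}, $H$ is minimal in $\H_s^\mu(F)$ if and only if $H$ is minimal in every stratum $\H_s^\nu(F)$ with $H \in \H_s^\nu(F) \subsetneq \H_s^\mu(F)$, which in the generic case corresponds exactly to $\nu$ with $\lambda \leq \nu < \mu$ (using Proposition \ref{2:prop:pointormaxd} to rule out coincidences $\H_s^\nu(F) = \H_s^\mu(F)$ when $\nu < \mu$). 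Each such stratum has dimension $\ell(\nu) - s < m$, so the inductive hypothesis will yield that $H$ is minimal in $\H_s^\nu(F)$ if and only if $\lambda/\nu$ is alternate odd. Combining this with Lemma \ref{2:lem:smalleriff}, which gives $\lambda/\nu = (\lambda/\mu)/(\nu/\mu)$, reduces the whole proposition to a purely combinatorial claim about compositions.

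Writing $(a_1, \dots, a_s) = \lambda/\mu$ with $\sum a_j = l$, the claim to prove is: $(a_j)$ is alternate odd if and only if every composition $(b_1, \dots, b_s)$ with $1 \leq b_j \leq a_j$ and $(b_j) \neq (a_j)$ is alternate odd. The forward direction is immediate, since $a_j = 1$ at every position $j$ of the same parity as $s$ forces $b_j = 1$ there too. For the converse, whenever some $a_k \geq 2$ with $k$ of the same parity as $s$, I will exhibit a reduction $(b_j)$ that is not alternate odd. If some other index $j_0 \neq k$ has $a_{j_0} \geq 2$, I set $b_{j_0} = a_{j_0} - 1$ and $b_j = a_j$ for $j \neq j_0$, so that $b_k = a_k \geq 2$ violates alternate oddness at position $k$. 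Otherwise $a_j = 1$ for all $j \neq k$, which forces $a_k = l - s + 1 = m + 1 \geq 3$, and then $b_k := a_k - 1 \geq 2$ together with $b_j := 1$ for $j \neq k$ gives the required non-alternate-odd reduction. The hypothesis $m \geq 2$ is indispensable precisely in this last subcase.

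One routine verification I make along the way is that every candidate $(b_1, \dots, b_s)$ with $1 \leq b_j \leq a_j$ really does arise as $\lambda/\nu$ for some $\nu$ with $\lambda \leq \nu \leq \mu$: this is clear by splitting the $j$-th group of $\mu$-parts (the block that sums to $\lambda_j$) into $b_j$ consecutive sub-blocks in any manner. The main conceptual difficulty I anticipate is the clean bookkeeping between the three compositions $\lambda$, $\nu$, $\mu$ when applying Lemma \ref{2:lem:smalleriff}; the combinatorial claim itself is delicate only in the single-nontrivial-part subcase, where the dimension lower bound $m \geq 2$ precisely rescues the argument.
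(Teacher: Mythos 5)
Your proof is correct and takes essentially the same approach as the paper: induction on the dimension of the stratum, with Proposition \ref{2:prop:indstart} as the base case, Lemma \ref{2:lem:maxiffmax} to reduce to strictly smaller strata, Lemma \ref{2:lem:smalleriff} to transfer the alternate-odd condition, and Lemma \ref{2:lem:sdistinct} plus genericity to pin down $\ell(c(H))=s$. The only cosmetic difference is in the reverse implication: the paper invokes Lemma \ref{2:lem:diamond} to produce two length-$(l-1)$ compositions and argues directly that both quotients being alternate odd forces $\lambda/\mu$ to be, whereas you argue the contrapositive by explicitly constructing a single codimension-one witness $(b_j)$ that is not alternate odd — the same combinatorial content, just organized differently.
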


\begin{proof}
We prove this by induction in the poset of strata of $\H_s^\mu(F)$. The initial step is when $l= s+1$ and is covered by Proposition \ref{2:prop:indstart}. Next, we assume the statement is true for the strata of dimension $l-s-1\geq 1$ and we show that it is true when the stratum is $(l-s)$-dimensional. We will just show the proof for minimal polynomials as the proof for maximal polynomials is analogous.

Let $\lambda=c(H)$ and suppose $\lambda/\mu$ is alternate odd and that $\ell(\lambda)=s$. Let $\gamma$ be any composition with $\lambda<\gamma<\mu$ such that $\H_s^\gamma(F)$ is at least one-dimensional. By Lemma \ref{2:lem:smalleriff} we have that $\lambda/\gamma=\frac{\lambda/\mu}{\gamma/\mu}$. Note that the $i^{th}$ part of $\lambda/\gamma$ is equal to the $i^{th}$ part of $\lambda/\mu$ minus some integer, thus $\lambda/\gamma$ is alternate odd since $\lambda/\mu$ is. So by the induction hypothesis, $H$ is the minimal polynomial of $\H_s^\gamma(F)$. And so by Lemma \ref{2:lem:maxiffmax}, $H$ is the minimal polynomial of $\H_s^\mu(F)$.

For the reverse statement, let $H$ be the minimal polynomial. Then by Lemma \ref{2:lem:sdistinct}, $H$ has $s$ distinct roots. Since $H_s^\mu(f)$ is at least two-dimensional, then by Lemma \ref{2:lem:diamond}, there occurs at least two distinct compositions, $\gamma$ and $\nu$ in $\H_s^\mu(F)$, of length $l-1$ and where $\lambda<\gamma,\nu$. By Proposition \ref{2:prop:pointormaxd}, the strata $\H_s^\gamma(F)$ and $\H_s^\nu(F)$ are $(l-s-1)$-dimensional.

By Lemma \ref{2:lem:maxiffmax} and the induction hypothesis this means that $\lambda/\gamma$ and $\lambda/\nu$ are alternate odd compositions. Since $\gamma$ and $\nu$ are of length $l-1$, there are two indices $j\neq i$ such that $$\lambda=(\mu_1,\dots,\mu_{j-1},\mu_j+\mu_{j+1},\mu_{j+2},\dots,\mu_l)$$ and
$$\nu=(\mu_1,\dots,\mu_{i-1},\mu_i+\mu_{i+1},\mu_{i+2},\dots,\mu_l).$$
Thus $\gamma/\mu = (1,\dots,1,2,1,\dots,1)$, where the index $2$ is in the $j^{th}$ position and $\nu/u = (1,\dots,1,2,1,\dots,1)$, where the index $2$ is in the $i^{th}$ position.

Since $\lambda/\gamma=\frac{\lambda/\mu}{\gamma/\mu}$ and $\lambda/\nu=\frac{\lambda/\mu}{\nu/\mu}$, we have that $$\lambda/\gamma=((\lambda/\mu)_1,\dots.,(\lambda/\mu)_{j-1},(\lambda/\mu)_j-1,(\lambda/\mu)_{j+1},\dots,(\lambda/\mu)_s)$$ and that
$$\lambda/\nu=((\lambda/\mu)_1,\dots.,(\lambda/\mu)_{i-1},(\lambda/\mu)_i-1,(\lambda/\mu)_{i+1},\dots,(\lambda/\mu)_s).$$ Since $j\neq i$ then $\lambda/\gamma\neq \lambda/\nu$ and since both compositions are alternate odd then so must $\lambda/\mu$ be.
\end{proof}

Now that we have established the second part of Theorem \ref{2:thm:minmax} for the generic case we will extend it to the non-generic cases. Note that we will be using Proposition \ref{3:prop:subdiv} in the following two proofs, but as that proposition only requires the first part of Theorem \ref{2:thm:minmax}, there are no circular arguments.

\begin{lemma}\label{2:lem:nongen}
If $H\in\H_s^\mu(F)$ and $c(H)<\gamma$ for some $\gamma$, of length $s$, such that $\gamma/\mu$ is alternate odd (resp. even), then $H$ is minimal (resp. maximal).
\end{lemma}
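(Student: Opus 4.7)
The plan is to reduce to the generic case (Proposition \ref{2:prop:generic}) via the perturbation result Proposition \ref{3:prop:subdiv}; I prove the case of minimal polynomials, as the maximal case is analogous. Setting $\lambda := c(H)$, I first observe that $\ell(\lambda) < \ell(\gamma) = s$ because $\lambda < \gamma$, so by Proposition \ref{2:prop:pointormaxd} the stratum $\H_s^\lambda(F) = \{H\}$ is a singleton. If $\ell(\mu) = s$, then $\gamma = \mu$ and the same proposition forces $\H_s^\mu(F) = \{H\}$, making $H$ trivially minimal; I therefore assume $\ell(\mu) > s$ from now on.

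Put $F_\epsilon := F + \epsilon T^{n-s}$. By Proposition \ref{3:prop:subdiv}, for all sufficiently small $\epsilon > 0$, $\H_s(F_\epsilon)$ is generic and $\mu \in \L_s(F_\epsilon)$. Let $G_\epsilon$ be the unique minimal polynomial of $\H_s^\mu(F_\epsilon)$ given by Item \ref{2:item:minmax1}; by Proposition \ref{2:prop:generic}, $c(G_\epsilon)$ has length $s$ and $c(G_\epsilon)/\mu$ is alternate odd. Compactness (Lemma \ref{2:lem:compcontr}) lets me extract a subsequential limit $G_\epsilon \to G_0 \in \H_s^\mu(F)$, and since the strata $\H_s^\mu(\cdot)$ depend continuously on the first $s$ coefficients, any $H' \in \H_s^\mu(F)$ is approximated by some $H'_\epsilon \in \H_s^\mu(F_\epsilon)$; passing to the limit in $(G_\epsilon)_{s+1} \leq (H'_\epsilon)_{s+1}$ shows that $G_0$ is the unique minimal polynomial of $\H_s^\mu(F)$.

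It remains to show $G_0 = H$. Since roots can only coalesce in the limit, $c(G_0) \leq c(G_\epsilon)$. The hypothesis $\lambda < \gamma \leq \mu$ with $\gamma/\mu$ alternate odd ensures that the length-$s$ composition $c(G_\epsilon) \in \L_s(F_\epsilon)$ is forced to be the ``lift'' of $\lambda$ from Proposition \ref{3:prop:subdiv}(4), rather than a length-$s$ composition already in $\L_s(F)$, because by that proposition the lift is incomparable with every length-$\leq s$ composition in $\L_s(F)$. Hence $G_\epsilon$ degenerates as $\epsilon \to 0$ to the unique polynomial in $\H_s^\lambda(F)$, namely $H$, giving $G_0 = H$. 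This identification is the main obstacle: a priori the generic perturbation could accommodate several alternate odd length-$s$ compositions below $\mu$, and one must use the hypothesis on $\gamma$ essentially -- together with the incomparability in Proposition \ref{3:prop:subdiv}(4) -- to ensure that the minimal polynomial $G_\epsilon$ is the one lifting $\lambda$ and so converges precisely to $H$.
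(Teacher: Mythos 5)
There is a genuine gap in your argument at the decisive step ``$G_0 = H$.'' Having extracted a subsequential limit $G_0$ of the minimal polynomials $G_\epsilon$ of the perturbed generic strata $\H_s^\mu(F_\epsilon)$, you need to identify $G_0$ with $H$; but the reasoning offered does not do so. Proposition \ref{3:prop:subdiv}(4) only asserts the \emph{existence} of a length-$s$ composition in $\L_s(F_\epsilon)$ above $\lambda := c(H)$ that is incomparable with the other length-$\leq s$ compositions of $\L_s(F)$; it gives no reason why the composition $c(G_\epsilon)$ of the minimal polynomial must be that particular lift. Indeed $c(G_\epsilon)$ is just \emph{some} length-$s$ composition $\leq\mu$ in $\L_s(F_\epsilon)$ with $c(G_\epsilon)/\mu$ alternate odd; it could perfectly well be a length-$s$ composition already occurring in $\L_s(F)$, in which case $G_\epsilon$ converges to a polynomial with $s$ distinct roots that is certainly not $H$ (since $\ell(c(H)) < s$). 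Noticing that $\gamma$ does not itself occur in $\L_s(F)$ (because $\H_s^\gamma(F)=\{H\}$ but $c(H)\neq\gamma$) does not rescue this: you would still have to rule out every other alternate odd length-$s$ composition below $\mu$ as a candidate for $c(G_\epsilon)$, and nothing in your argument does so. You flag this as ``the main obstacle'' yourself, but the invocation of incomparability does not close it. A secondary, lesser concern is the semicontinuity claim that every $H'\in\H_s^\mu(F)$ is approximated by points of $\H_s^\mu(F_\epsilon)$; this can be made rigorous via Lemma \ref{2:lem:projhom} for interior points and a diagonal argument, but as written it is asserted rather than proved.

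The paper avoids the difficulty by arguing contrapositively and \emph{locally}: assuming $H$ is not (locally, via Lemma \ref{2:lem:loceq}) minimal, it produces a nearby polynomial $Q\in\H_s^\mu(F)$ with strictly smaller $(s+1)$-th coefficient and $c(Q)=\mu$, then constructs a polynomial $G$ near $H$ with $c(G)=\gamma$ (using that $\H_0^\gamma(F)$ is the closure of its relative interior), and shows that the perturbed stratum $\H_s^\mu(G)$ — which may be taken generic by Proposition \ref{3:prop:subdiv} — both contains a polynomial near $Q$ and has $G$ as its minimal polynomial by Proposition \ref{2:prop:generic}; the two facts contradict each other via a straightforward estimate on the $(s+1)$-th coefficients. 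Because the perturbation is anchored at the given $H$, $Q$, and $\gamma$, one never has to identify the limit of a family of minimal polynomials of the perturbed slices, which is exactly the step your proof cannot complete.
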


\begin{proof}
Again, we just show the statement for minimal polynomials. If $\H_s^\mu(F)$ is just a point, the statement is clear so by Proposition \ref{2:prop:pointormaxd}, we may assume it is $(l-s)$-dimensional, where $l>s$. Thus we may also assume $F$ has no repeated roots. Suppose $H$ is not minimal, then by Lemma \ref{2:lem:loceq}, $H$ is not locally minimal.

Thus for any $\delta>0$, $B_\delta(H)\cap\H_s^\mu(F)$ contains a polynomial contains a polynomial $Q$ whose first free coefficient is $r\in \R_{>0}$ smaller than the first free coefficient of $H$. By Proposition \ref{2:prop:pointormaxd}, $\H_s^\mu(F)$ is the closure of its relative interior, so we may assume $c(Q)=\mu$. So by Lemma \ref{2:lem:projhom}, $P^{n-l}(Q)$ is an interior point of $P^{n-l}(\H_0^\mu(F))$, and there is therefore an $\epsilon$ with $0<\epsilon<r/2$ such that $B_\epsilon(P^{n-l}(Q))\subset P^{n-l}(\H_0^\mu(F))$.

All compositions occur in $\H_0(F)$ and since $\H_0^\gamma(F)$ is the closure of its relative interior then $B_\epsilon(P^{n-l}(H))\cap P^{n-l}(\H_0^\gamma(F))$ contains a point, $P^{n-l}(G)$, where $c(G)=\gamma$. The intersection $P^{n-l}(\H_s^\mu(G))\cap B_\epsilon(P^{n-l}(Q))$ is nonempty since the first $s+1$ coefficients of $Q$ equals the first $s+1$ coefficients of $H$ and $P^{n-l}(G)\in B_\epsilon(P^{n-l}(H)) $. Thus there is a polynomial from $B_\epsilon(Q)$ in $\H_s^\mu(G)$.

By Proposition \ref{3:prop:subdiv}, we may assume $\H_s^\mu(G)$ is generic which, by Proposition \ref{2:prop:generic}, means that $G$ must be the minimal polynomial of $\H_s^\mu(G)$. However the first free coefficient of any polynomial from $B_\epsilon(Q)$ is smaller than the first free coefficient of $H$ minus $r/2$ and the first free coefficient of $G$ is at least as large as the first free coefficient of $H$ minus $r/2$. This is a contradiction and so $H$ must be minimal in $\H_s^\mu(F)$.
\end{proof}

\begin{lemma}\label{2:lem:nongen2}
If $H\in\H_s^\mu(F)$ and $c(H)\not< \nu$ for any $\nu$, of length $s$, such that $\nu/\mu$ is alternate odd (resp. even), then $H$ is not minimal (resp. maximal).
\end{lemma}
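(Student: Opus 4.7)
The plan is to prove the contrapositive: assuming $H$ is the unique minimum polynomial of $\H_s^\mu(F)$ with composition $c(H)=\lambda$, I will exhibit a length-$s$ composition $\nu$ with $\lambda\leq\nu$ (strictly whenever $\ell(\lambda)<s$) and $\nu/\mu$ alternate odd, which contradicts the hypothesis. The strategy is to approximate the possibly non-generic slice by nearby generic slices in which Proposition \ref{2:prop:generic} pins down the composition of the minimum polynomial exactly, and then transport this combinatorial information across the limit. The argument for maximal polynomials is completely analogous and I will not repeat it.

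Concretely, after reducing to $\ell(\mu)>s$ (the stratum is a point otherwise by Proposition \ref{2:prop:pointormaxd}) and, if necessary, replacing $F$ by a polynomial in $\H_s(F)$ with no repeated roots so that Proposition \ref{3:prop:subdiv} applies, I consider the perturbations $F_k:=F+\epsilon_k T^{n-s}$ with $\epsilon_k\downarrow 0$. Proposition \ref{3:prop:subdiv} guarantees that each slice $\H_s(F_k)$ is generic and that $\H_s^\mu(F_k)$ is nonempty and contains polynomials of composition $\mu$. Let $H_k$ be the unique minimum polynomial of $\H_s^\mu(F_k)$ given by Theorem \ref{2:thm:minmax}(1); since this stratum is generic, Proposition \ref{2:prop:generic} applies and tells us that $\nu_k:=c(H_k)$ has length exactly $s$ with $\nu_k/\mu$ alternate odd. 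As there are only finitely many compositions of $n$, I pass to a subsequence along which $\nu_k=\nu$ is constant, and then, by compactness of a neighbourhood of $\H_s^\mu(F)$ (Lemma \ref{2:lem:compcontr}), to a further subsequence along which $H_k\to\tilde H$ for some polynomial $\tilde H$.

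The crux is to verify that $\tilde H$ is itself a minimum polynomial of $\H_s^\mu(F)$: by uniqueness (Theorem \ref{2:thm:minmax}(1)) this forces $\tilde H=H$, and because root collisions in the limit $H_k\to\tilde H$ can only coarsen the composition, one obtains $\lambda=c(H)=c(\tilde H)\leq\nu$ with $\ell(\nu)=s$ and $\nu/\mu$ alternate odd, contradicting the hypothesis. Containment $\tilde H\in\H_s^\mu(F)$ is immediate from continuity of the coefficient map together with $c(\tilde H)\leq\nu\leq\mu$. For minimality, given any competitor $G\in\H_s^\mu(F)$, I use Proposition \ref{2:prop:pointormaxd}(3) to reduce to the case that $G$ lies in the relative interior of the stratum, so that $c(G)=\mu$, and then invoke Proposition \ref{3:prop:subdiv}(3) to produce approximants $G_k\in\H_s^\mu(F_k)$ with composition $\mu$ and $G_k\to G$; passing to the limit in $H_{k,s+1}\leq G_{k,s+1}$ gives $\tilde H_{s+1}\leq G_{s+1}$, as required.

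I expect the main obstacle to lie in this continuity-of-the-minimum step — in particular, in producing the approximants $G_k$ genuinely inside the correct stratum of the perturbed slice and in carrying out the preliminary reduction to an $F$ without repeated roots so that Proposition \ref{3:prop:subdiv} is applicable. Once the approximation is set up, the rest of the argument — extracting finite-cardinality subsequences, identifying $\tilde H$ as the minimum by uniqueness, and translating convergence of roots into the inequality $c(\tilde H)\leq\nu$ in the partial order on compositions — is routine.
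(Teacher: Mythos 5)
Your perturbation scheme — replacing $F$ by $F_k:=F+\epsilon_k T^{n-s}$ so that each $\H_s^\mu(F_k)$ is generic, then passing to the limit of the minimizers — is a genuine alternative to what the paper does. The paper instead works inside $\H_2^\mu(F)$ (treating $s=2$ separately via Meguerditchian's theorem), picks $G_i\in\H_2^\mu(F)\cap B_{1/i}(H)$ with $c(G_i)=\mu$, looks at the slices $\H_s^\mu(G_i)$ anchored at $G_i$, and takes the limit $Q$ of their minimal polynomials $Q_i$. So the paper perturbs coefficients $3,\dots,s$ freely while you perturb only the $s^{th}$; both give the needed genericity via Proposition~\ref{3:prop:subdiv}, but the paper's arrangement makes the resulting limit land naturally back in $\H_s^\mu(F)$.

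The gap you flag is real, but it is also avoidable, and the paper's route shows how. You try to identify the limit $\tilde H$ as a minimum by a continuity-of-the-minimum argument, which requires producing approximants $G_k\in\H_s^\mu(F_k)$ with $c(G_k)=\mu$ and $G_k\to G$ for every relative-interior competitor $G$; Proposition~\ref{3:prop:subdiv}(3) only gives that the stratum $\H_s^\mu(F_k)$ is nonempty, not that it accumulates on a prescribed $G$, so you would need a separate implicit-function-theorem argument using the full rank of the Jacobian of $(E_1^\mu,\dots,E_s^\mu)$ at points with $\ell(\mu)$ distinct coordinates. The paper sidesteps this entirely: once you have $c(Q)\leq\nu$ with $\ell(\nu)=s$ and $\nu/\mu$ alternate odd, the already-established Lemma~\ref{2:lem:nongen} immediately declares $Q$ to be the minimal polynomial of $\H_s^\mu(F)$, and uniqueness plus the hypothesis $c(H)\not<\nu$ then force $H\neq Q$. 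If you replace your continuity-of-the-minimum step with this invocation of Lemma~\ref{2:lem:nongen} applied to $\tilde H$, your argument closes cleanly; as written, though, the identification of $\tilde H$ as the minimum is the missing piece and the proposal does not supply it.
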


\begin{proof}
Again, we just show the statement for alternate odd compositions. If $s=2$, then by the theorem in \cite{meguerditchian1992theorem} either $F$ has only one distinct root and $\H_2(F)=\{F\}$ or $\H_2(F)$ contains no polynomials with strictly less than two distinct roots. By assumption we are not in the former case and so $\H_2(F)$ is generic and thus the statement follows from Proposition \ref{2:prop:generic}.

Next we treat the cases when $s\geq 3$ and by the previous paragraph we have that $H_2(F)$ is generic and all but the composition $(n)$ occurs. By Proposition \ref{2:prop:pointormaxd}, $\H_2^\mu(F)$ is the closure of its relative interior, so for any integer $i\geq 1$ there is a polynomial $G_i\in B_{1/i}(H)\cap \H_2^\mu(F)$ with composition $\mu$. Due to Proposition \ref{3:prop:subdiv}, we may assume $\H_s^\mu(G_i)$ is generic. Thus, by Proposition \ref{2:prop:generic}, the composition, $\nu$, of the minimal polynomial in $\H_s^\mu(G_i)$ is such that $\nu/\mu$ is alternate odd.

Since there are finitely many compositions with this property, there is one such $\nu$ such that for infinitely many $i$, the minimal polynomial of $\H_s^\mu(G_i)$ has composition $\nu$. So we may assume that for all $i\geq 1$, the minimal polynomial, $Q_i$, of $\H_s^\mu(G_i)$ has the same composition $\nu$. Since $(1/i)_{i\geq 1}$ converges to zero and $\H_2^\mu(F)$ is compact, the sequence $(G_i)_{i\geq 1}$ converges. Similarly, since $\H_2^\mu(F)$ is sequentially compact, an infinite subsequence of $(Q_i)_{i\geq 1}$ converges and so for notations sake we will assume this is the sequence $(Q_i)_{i\geq 1}$.

The limit of $(G_i)_{i\geq 1}$ is $H$ and since the first $s+1$ coefficients of $Q_i$ is equal to the first coefficients of $G_i$, the limit, $Q$, of $(Q_i)_{i\geq 1}$ also lies in $\H_s^\mu(F)$. Since $\H_2^\nu(F)$ is the closure of its relative interior and $c(Q_i)=\nu$ for all $i$, then $c(Q)\leq \nu$ and thus by Lemma \ref{2:lem:nongen}, $Q$ is the minimal polynomial of $\H_s^\mu(F)$. Since $c(H)$ is not smaller than a composition $\gamma$ such that $\gamma/\mu$ is alternate odd, then $c(H)\not<\nu$ and thus $H\neq Q$. So $H$ is not the minimal polynomial of $\H_s^\mu(F)$.
\end{proof}

Proposition \ref{2:prop:generic} proves the second part of Theorem \ref{2:thm:minmax} for the generic cases and the combination of Lemma \ref{2:lem:nongen} and Lemma \ref{2:lem:nongen2} proves it for the non-generic cases. And since we proved the first part of Theorem \ref{2:thm:minmax} for all cases in Section \ref{sec:strat}, our work is done. 

\bibliographystyle{alpha}
\bibliography{bibliography}

\end{document}